\theoremstyle{plain} 
\newtheorem{theorem}{\indent\bf Theorem}[section]
\theoremstyle{definition} 
\newtheorem{definition theorem}[theorem]{\indent\bf Definition-Theorem}
\newtheorem{problem}[theorem]{\indent\bf Problem}
\newtheorem{thm}{Theorem}[section]
\newtheorem{cor}[thm]{Corollary}
\newtheorem{lem}[thm]{Lemma}
\newtheorem{prop}[thm]{Proposition}
\theoremstyle{definition}
\newtheorem{defn}{Definition}[section]
\theoremstyle{remark}
\newtheorem{rem}{Remark}[section]
\newcommand{\bi}{\begin{itemize}}
\newcommand{\ei}{\end{itemize}}
\newcommand{\be}{\begin{equation}}
	\newcommand{\ee}{\end{equation}}
\newcommand{\bea}{\begin{eqnarray}}
	\newcommand{\eea}{\end{eqnarray}}
\newcommand{\ben}{\begin{eqnarray*}}
	\newcommand{\een}{\end{eqnarray*}}
\newcommand{\bt}{\begin{split}}
	\newcommand{\et}{\end{split}}
\newcommand{\bet}{\begin{equation}}
	\newcommand{\mc}{\mathbb{C}}
	\newcommand{\ra}{\rightarrow}
\newcommand{\h}{\textbf}
\newcommand{\tm}{\text{mult}}
\begin{document}
		\title[]{Multiplicities and modifications, and singularities associated to blowing down negative vector bundles}
		
		\author[F. Deng]{Fusheng Deng}
		\address{Fusheng Deng: \ School of Mathematical Sciences, University of Chinese Academy of Sciences\\ Beijing 100049, P. R. China}
		\email{fshdeng@ucas.ac.cn}
		

	\author[Y. Li]{Yinji Li}
	\address{Yinji Li:  Institute of Mathematics\\Academy of Mathematics and Systems Sciences\\Chinese Academy of
		Sciences\\Beijing\\100190\\P. R. China}
	\email{1141287853@qq.com}

           \author[Q. Liu]{Qunhuan Liu}
           \address{Qunhuan Liu:  Institute of Mathematics\\Academy of Mathematics and Systems Sciences\\Chinese Academy of
		Sciences\\Beijing\\100190\\P. R. China}

          \email{liuqunhuan23@mails.ucas.edu.cn}
		\author[X. Zhou]{Xiangyu Zhou}
		\address{Xiangyu Zhou: Institute of Mathematics\\Academy of Mathematics and Systems Sciences\\and Hua Loo-Keng Key
			Laboratory of Mathematics\\Chinese Academy of
			Sciences\\Beijing\\100190\\P. R. China}
		\address{School of
			Mathematical Sciences, University of Chinese Academy of Sciences,
			Beijing 100049, P. R. China}
		\email{xyzhou@math.ac.cn}
		
		\begin{abstract}
We first present the mixed Hilbert-Samuel multiplicities of analytic local rings over $\mc$ as generalized Lelong numbers and further represent them as intersection numbers in the context of modifications.
As applications, we give estimates or an exact formula for the multiplicities of isolated singularities that given by the Grauert blow-downs of negative holomorphic vector bundles. 
		\end{abstract}
		
		\thanks{}

		\maketitle
		\tableofcontents

\section{Introductions}
The present work is originally motivated by the consideration of isolated singularities given by the blow-downs
of negative holomorphic vector bundles over compact complex manifolds.

Let $p:E\ra M$ be a holomorphic vector bundle over a compact complex manifold $M$.
We call $E$ \h{negative} (in the sense of Grauert) if its dual bundle $E^*$ is ample.
By a fundamental result of Grauert \cite{Gr62}, $E$ is negative if and only if the zero section, 
identified with $M$, in $E$ is exceptional, namely,
there is a complex space $A$ and a proper holomorphic surjective map $\pi:E\ra A$ such that 
$\pi(M)=\{a\}$ is a single point in $A$ and $\pi:E\backslash M\ra A\backslash\{a\}$ is biholomorphic.
We will call $(A,a)$ the \h{Grauert blow down} of $E$, which is unique up to normalization.
In this paper, we assume that a Grauert blow down is always normal.

In general $a$ is a singularity of $A$. Indeed, it is shown in our previous work that $A$ is regular at 
$a$ if and only if $M$ is biholomorphic to a projective space $\mathbb P^n$ and $E$ is isomorphic to 
the tautological line bundle $\mathcal O_{\mathbb P^n}(-1)$ over $\mathbb P^n$  as holomorphic vector bundles \cite{DLLZ24}. 
In the present paper, we are interested in further study of the singularity property of $A$ at $a$.

For a general complex space $Y$ of pure dimension $n$ with $y\in Y$,
a fundamental invariant that reflects the singularity property of $Y$ at $y$ is the multiplicity of $Y$ at $y$.
There are various different formulations of the multiplicity.
We list three of them as follows:
\bi
\item[(1)] {\emph{ Geometric formulation $\text{mult}(Y,y)$}}: embed $Y$ into some $\mc^N$ locally near $y$, then the projection from some small neighborhood $U$ of $y$ in $Y$ to a generic $n$-dimensional linear subspace in $\mc^N$ is a ramified covering.
Then $\text{mult}(Y,y)$ is defined to be the degree of this ramified covering.
\item[(2)] { \emph{Algebraic formulation $\text{e}(\mathcal O_{Y,y}, \mathfrak M)$}}: the Hilbert-Samuel multiplicity $\text{e}(\mathcal O_{Y,y}, \mathfrak M)$ , where $\mathfrak M$ is the maximal ideal of the local ring $\mathcal O_{Y,y}$ of germs of holomorphic functions on $Y$ at $y$.
\item[(3)] { \emph{Analytic formulation $\nu([Y],y)$}}: embed $Y$ into some $\mc^N$ locally near $y$ and get a positive closed current $[Y]$ on some open set of $\mc^N$, then $\nu([Y],y)$ is the Lelong number of $[Y]$ at $y$.
\ei 

The definitions of $\text{mult}(Y,y)$ and $\nu([Y],y)$, that we will recall later, seems not intrinsic for $(Y,y)$ at the first glance, 
since the definitions depend on the embedding of $Y$ into $\mc^N$ locally.
However, it turns out that all the above formulations are equivalent, namely, we have the following fundamental equalities:
$$\text{e}(\mathcal O_{Y,y}, \mathfrak M)=\tm(Y,y)=\nu([Y],y).$$
In particular these invariants (equal to each other) are all intrinsic, i.e., invariant under biholomorphic transformations.
The first equality can be proved based on some ideas of Chevally and Samuel (see \cite{Mum76}),
and the second one is a famous result of Thie \cite{Thi67}.

To study multiplicities of singularities associated to negative vector bundles, 
we indeed need to study multiplicities of singularities in more general contexts. 
Our first aim is to represent the mixed Hilbert-Samuel multiplicity in terms of generalized Lelong numbers,
which lays the foundation in our later study of the multiplicities the Grauert blow-downs of negative holomorphic vector bundles.

We first recall the definition of the mixed Hilbert-Samuel multiplicity.
Let $\mathcal O$ be the local ring of the holomorphic functions on a reduced complex space germ $(Y,y)$ with maximal ideal $\mathfrak M$.
An ideal $\mathfrak U$ of $\mathcal O$ is called \h{{primary to $\mathfrak M$}} if $y$ is the unique common zero of all elements in $\mathfrak U$,
or equivalently, $\mathfrak M^r\subset\mathfrak U$ for $r>>1$.

\begin{definition theorem}[see \cite{Laz04}]\label{def:multiplicity mixed}
Given positive integer $k\leq n$, $\mathfrak{M}$-primary ideals $\mathfrak{U}_1,\cdots,\mathfrak{U}_k$ of $\mathcal O$, and non-negative integers $d_1,\cdots,d_k$ such that $\sum_{j=1}^k d_j=n$, we define the (Hilbert-Samuel) mixed multiplicity
\begin{align*}
e(\mathfrak{U}_1^{[d_1]};\cdots;\mathfrak{U}_k^{[d_k]})\in\mathbb{N}
\end{align*}
by the property that 
\begin{align*}
\mathrm{dim}_{\mc}(\mathcal{O}/\mathfrak{U}_1^{t_1}\cdot...\cdot\mathfrak{U}_k^{t_k})
\end{align*} 
is given for $t_j>>1$ by a polynomial $P(t_1,\cdots,t_k)$ of the form
\begin{align*}
P(t_1,\cdots,t_k)=\sum_{d_1+\cdots+d_k=n}\frac{n!}{d_1!\cdot...\cdot d_k!}e(\mathfrak{U}_1^{[d_1]};\cdots;\mathfrak{U}_k^{[d_k]})\cdot t_1^{d_1}\cdot...\cdot t_k^{d_k}+\mathrm{lower}\ \mathrm{order} \ \mathrm{terms}.
\end{align*}
When $k=1$, $e(\mathfrak{U}^{[n]})=:e(\mathfrak{U})$ is the classical Hilbert-Samuel multiplicity of $\mathfrak{U}$ in $\mathcal{O}$.
\end{definition theorem}
Suppose that $Y\subseteq \mathbb{C}^M$ is an analytic subset of pure dimension $n$.
We denote the current of integration along $Y$ by $[Y]$, which is a closed positive current of bidimension $(n,n)$.
The Lelong number of $[Y]$ at $0$ is given by
\begin{align*}
\nu([Y],0):=\lim_{r\ra0}\int_{\{|z|<r\}}[Y]\wedge (dd^c\log|z|)^n.
\end{align*}
The limit in the right hand side can be understood as the mass of the origin $0$ 
with respect to the measure $[Y]\wedge (dd^c\log|z|)^n$ on $\mc^M$.
In the present work, we consider generalized Lelong number of $[Y]$ with respect to mixed weight functions.
\begin{defn}[see \cite{Dem}]
Suppose $g_j=(g_{j,1},\cdots,g_{j,N_j})$, $j=1,\cdots,n$, are $n$ tuples of holomorphic function germs on $(\mathbb{C}^M,0)$ such that $\{g_{j,1}=\cdots=g_{j,N_j}=0\}\cap Y=\{0\}$. 
The \h{generalized Lelong number} of $[Y]$ with respect to the mixed weights $(g_1,\cdots,g_n)$ is defined as
\begin{align*}
\nu([Y],0;\log|g_1|,\cdots,\log|g_n|)=\lim_{r\ra0}\int_{\{|z|<r\}}[Y]\wedge(dd^c\log|g_1|)\wedge\cdots\wedge(dd^c\log|g_n|),
\end{align*}
where $\log|g_j|:=\frac{1}{2}\log(|g_{j,1}|^2+\cdots+|g_{j,N_j}|^2)$.
\end{defn}

Similarly, $\nu([Y],0;\log|g_1|,\cdots,\log|g_n|)$ can be understood as the mass of the origin $0$ 
with respect to the measure $[Y]\wedge(dd^c\log|g_1|)\wedge\cdots\wedge(dd^c\log|g_n|)$ on $\mc^M$.
The condition that $\{g_{j,1}=\cdots=g_{j,N_j}=0\}\cap Y=\{0\}\ (1\leq j\leq n)$ ensures that the product of the involved currents is well defined.

Our first result represents the mixed Hilbert-Samuel multiplicities in terms of generalized Lelong numbers.
\begin{thm}\label{thm:mult vs lelong}
Let $(Y,y)\subset(\mc^M,0)$ be a germ of $n$-dimensional irreducible reduced analytic subset, 
$\mathfrak{M}$ be the maximal ideal in $\mathcal{O}_{Y,y}$, and $\mathfrak{U}_1,\cdots,\mathfrak{U}_n \subset \mathcal{O}_{Y,y}$ 
be $\mathfrak{M}$-primary ideals.
Assuming that $\mathfrak{U}_j$ is generated by $g_{j,1},\cdots,g_{j,N_j} \in\mathcal{O}_{\mc^M,0}$, then it holds that 
\begin{align*}
e(\mathfrak{U}_1;\cdots;\mathfrak{U}_{n})=\nu([Y],0;\log|g_1|,\cdots,\log|g_n|).
\end{align*}
\end{thm}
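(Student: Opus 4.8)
The plan is to reduce the fully mixed identity to the case of a single $\mathfrak M$-primary ideal by a polarization argument, and then to identify that single-ideal invariant, on both the algebraic and the analytic side, with the local degree of a finite map germ.

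\textbf{Step 1 (polarization).} The starting point is that the weight attached to a \emph{product} of ideals is the \emph{sum} of the individual weights. Indeed, if $\mathfrak U,\mathfrak V$ are generated by $h=(h_i)$ and $k=(k_j)$, then $\mathfrak U\mathfrak V$ is generated by the products $h_ik_j$, and $\sum_{i,j}|h_ik_j|^2=\big(\sum_i|h_i|^2\big)\big(\sum_j|k_j|^2\big)$ forces the exact equality $\log|h\otimes k|=\log|h|+\log|k|$. Hence for positive integers $a_1,\dots,a_n$ the $\mathfrak M$-primary ideal $\mathfrak A(a):=\mathfrak U_1^{a_1}\cdots\mathfrak U_n^{a_n}$ carries the weight $\phi_a:=\sum_{j}a_j\log|g_j|$. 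Setting $t_j=a_jt$ in Definition-Theorem \ref{def:multiplicity mixed} shows that the classical multiplicity of $\mathfrak A(a)$ expands as $e(\mathfrak A(a))=\sum_{d_1+\cdots+d_n=n}\frac{n!}{d_1!\cdots d_n!}\,e(\mathfrak U_1^{[d_1]};\cdots;\mathfrak U_n^{[d_n]})\,a^d$, while multilinearity of the Bedford--Taylor/Demailly product gives the matching expansion $\nu([Y],0;\phi_a,\dots,\phi_a)=\sum_{d}\frac{n!}{d_1!\cdots d_n!}\,\nu([Y],0;\log|g_1|^{[d_1]},\dots,\log|g_n|^{[d_n]})\,a^d$. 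These are two polynomials of degree $n$ in $a$, and the coefficient of $a_1\cdots a_n$ in each is $n!$ times the left- and right-hand side of the theorem. Consequently it suffices to prove the \emph{diagonal} equality $e(\mathfrak a)=\nu([Y],0;\log|h|,\dots,\log|h|)$ for every $\mathfrak M$-primary ideal $\mathfrak a$ with generators $h$: applying it to $\mathfrak a=\mathfrak A(a)$ (whose weight is $\phi_a$) for all $a$ forces the two polynomials to coincide, and comparing the $a_1\cdots a_n$ coefficients yields the mixed statement.

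\textbf{Step 2 (reduction to a finite map).} Both diagonal invariants depend only on the integral closure $\overline{\mathfrak a}$. For the Hilbert--Samuel multiplicity this is Rees' theorem; for the generalized Lelong number it holds because ideals with the same integral closure have weights differing by a globally bounded function, and the Monge--Ampère mass carried by a single point is unchanged under a bounded perturbation of the weight (Demailly's comparison theorem). Since $\mc$ is infinite, $\mathfrak a$ admits a minimal reduction $\mathfrak b=(f_1,\dots,f_n)$ generated by $n$ general $\mc$-linear combinations of the $h_i$, with $\overline{\mathfrak b}=\overline{\mathfrak a}$ and $\{f_1=\cdots=f_n=0\}\cap Y=\{0\}$. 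Thus $f=(f_1,\dots,f_n)\colon (Y,0)\ra(\mc^n,0)$ is a finite holomorphic map germ, and it is enough to establish the diagonal equality for $\mathfrak b$.

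\textbf{Step 3 (both sides equal the local degree).} Algebraically, the multiplicity of the ideal generated by the components of a finite map equals the geometric local degree, $e(\mathfrak b)=\deg_0(f|_Y)$ (the generic number of sheets of $f$ near $0$; this is the finite-map form of the geometric formulation in the introduction). Analytically, I use the classical identity $(dd^c\log|w|)^n=\delta_0$ on $(\mc^n,0)$, the Monge--Ampère measure of the logarithmic pole having total mass $1$. Since $\log|f|=f^*\log|w|$ and $f$ is finite, the functoriality of the Monge--Ampère operator under finite maps gives
\begin{align*}
\nu([Y],0;\log|f|,\dots,\log|f|)=\int_{Y}\big(dd^c f^*\log|w|\big)^n=\deg_0(f|_Y)\cdot\int_{\mc^n}\big(dd^c\log|w|\big)^n=\deg_0(f|_Y).
\end{align*}
Comparing the two computations gives $e(\mathfrak b)=\nu([Y],0;\log|f|,\dots,\log|f|)$, which together with Steps 1--2 proves the theorem. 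The delicate point is precisely this analytic computation: rigorously justifying $\int_Y(dd^c f^*\log|w|)^n=\deg_0(f|_Y)$ when $Y$ is singular and $f$ is merely finite rather than flat, and controlling the Monge--Ampère masses concentrated near the non-smooth locus of $Y$. I expect to settle it through Demailly's functorial theory of generalized Lelong numbers together with a King-type first main theorem, passing if necessary to a resolution of $Y$ and to the normalized blow-up of $\mathfrak b$ and invoking the projection formula, so that the analytic mass is identified with the same intersection number that computes $e(\mathfrak b)$.
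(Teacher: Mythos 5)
Your proposal is correct, and its skeleton is the same as the paper's: polarize to reduce the mixed identity to the diagonal case of a single $\mathfrak{M}$-primary ideal (your Step 1 is exactly the paper's deduction of Theorem \ref{thm:mult vs lelong} from its Proposition \ref{prop:single ideal}), then identify both diagonal invariants with the local degree of a finite map germ via Samuel's formula and Demailly's direct-image formula for Lelong numbers. Where you genuinely deviate is in how the diagonal case is organized. The paper keeps the full ideal $\mathfrak{U}$ and its generator map $g=(g_1,\cdots,g_N)\colon Y\ra Z:=g(Y)\subset\mathbb{C}^N$: it pushes $[Y]$ forward to $d[Z]$, invokes Thie's theorem to convert $\nu([Z],\log|z|)$ into the degree of a generic linear projection $Z\ra\mathbb{C}^n$, applies Samuel's formula to $\pi\circ g$, and finally proves $e((g_1,\cdots,g_n))=e(\mathfrak{U})$ by an explicit inductive Weierstrass-polynomial argument (a self-contained proof of the ``easy direction'' of Rees' theorem). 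You instead pass to a minimal reduction $\mathfrak{b}=(f_1,\cdots,f_n)$ at the outset, using Rees on the algebraic side and, on the analytic side, the fact that equal integral closures force the two weights to differ by $O(1)$ on $Y$; that last fact is the valuative (Lejeune--Teissier) characterization of integral closure, which you should state and prove explicitly (a monic-equation estimate), and it feeds into the comparison theorem (Lemma \ref{lem:comparison thm} with all $l_j=1$). After that your target $\mathbb{C}^n$ is smooth, so the Lelong-number computation is immediate and Thie's theorem is never needed; this is the main thing your route buys, at the price of importing the Northcott--Rees existence of minimal reductions. Finally, the step you flag as ``delicate'' in Step 3 is not delicate at all: the identity $\nu([Y],0;\log|f|,\cdots,\log|f|)=\nu(f_*[Y],0;\log|w|)=\deg_0(f|_Y)$ is precisely the transformation formula under direct images that the paper itself cites, namely \cite[Chap.~III, Prop.~9.3]{Dem}, combined with $f_*[Y]=\deg_0(f|_Y)\,[\mathbb{C}^n]$ and $(dd^c\log|w|)^n=\delta_0$; no resolution of $Y$, normalized blow-up, or King-type theorem is required.
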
 
From Theorem \ref{thm:mult vs lelong}, we infer that $\nu([Y],0;\log|g_1|,\cdots,\log|g_n|)$ is intrinsic. 
Indeed, this also follows from the comparison theorem of Lelong numbers directly, see Lemma \ref{lem:comparison thm}.

In the case that $\mathfrak{U}_1=\cdots=\mathfrak{U}_n=\mathfrak{M}$, 
we get 
$$e(\mathfrak{M})=\nu([Y],0)=\nu([Y],0;\log|z|,\cdots, \log|z|),$$
which recovers the result of Thie mentioned above.

If $(Y,y)$ is regular and $\mathfrak{U}_1=\cdots=\mathfrak{U}_n$, the above theorem is proved by Demailly in \cite{Dem09}.
Our approach is quite different from that of Demailly.
In our argument, Samuel's projection fomula and transformation fomula of Lelong numbers under ramified coverings play essential roles.

With Theorem \ref{thm:mult vs lelong} in hand, we can now study relations between multiplicity of singularities and modifications.
The main aim is to transform Hilbert-Samuel multiplicities into intersection numbers and Monge-Amp\`ere products by a proper modification.

Let us first recall some basic notions.

\begin{defn}\label{def:modification}
A proper surjective holomorphic map $\pi:X\ra Y$ of reduced and irreducible complex spaces is called a \h{modification} if there are analytic subsets $A\varsubsetneqq X$ and $B\varsubsetneqq Y$
such that:
\begin{itemize}
\item[(1)] $B=\pi(A)$,
\item[(2)] $\pi|_{X\backslash A}:X\backslash A\ra Y\backslash B$ is biholomorphic.
\end{itemize}
\end{defn}
By a modification $\pi:(X,A)\ra (Y,B)$, we always refer to a modification of the form as in Definition \ref{def:modification}.


\begin{defn}\label{def:desingularization}
A modification $\pi:(X,A)\ra (Y,B)$ is called a \h{desingularization} of $(Y, B)$ if $X$ is regular.
\end{defn}

Let us first recall a fundamental result of Ramanujam:

\begin{thm}\label{thm:Ramanujam}[\cite{Ram73}\cite{Le09}]
If $\pi:(X, E)\ra (Y,y)$ is a desingularization such that the ideal subsheaf in $\mathcal O_X$ generated by $\pi^{*}\mathfrak M_{Y,y}$ is invertible (locally generated by a single element), 
then $\tm(Y,y)$ can be represented as the self-intersection number of $E$ as 
$$\tm(Y,y)=(-1)^{n-1}k^nE^n,$$
where $k$ is the vanishing order of $\pi^{*}\mathfrak M_{Y,y}$ along $E$.
\end{thm}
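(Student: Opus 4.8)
The plan is to reduce the statement to a Lelong-number computation via Theorem \ref{thm:mult vs lelong} and then transport that computation to the resolution $X$, where the maximal ideal becomes divisorial. Taking all the ideals in Theorem \ref{thm:mult vs lelong} equal to $\mathfrak M_{Y,y}$, generated by the restrictions $z_1,\dots,z_M$ of the ambient coordinates, gives
\begin{align*}
\tm(Y,y)=e(\mathfrak M)=\nu([Y],0)=\lim_{r\ra0}\int_{\{|z|<r\}}[Y]\wedge(dd^c\log|z|)^n,
\end{align*}
so that $\tm(Y,y)$ is the mass placed at the single point $0$ by the measure $[Y]\wedge(dd^c\log|z|)^n$. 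The whole point will be to compute this atom using data on $X$, where the point $0$ is replaced by the exceptional set $E$.

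First I would pull the weight back. Set $\Phi:=\pi^*z=(\pi^*z_1,\dots,\pi^*z_M)$, so that $\log|\Phi|=\pi^*\log|z|$ is plurisubharmonic on $X$ with $\{\log|\Phi|=-\infty\}=E$, and use that $\pi$ restricts to a biholomorphism $X\setminus E\ra Y\setminus\{y\}$. The hypothesis that $\pi^*\mathfrak M_{Y,y}\cdot\mathcal O_X=\mathcal O_X(-D)$ is invertible, with $D=kE$ its vanishing divisor, lets me factor locally $\pi^*z_j=f\,h_j$, where $f$ is a local generator of $\mathcal O_X(-D)$ and the $h_j$ have no common zero. Hence $\log|\Phi|=\log|f|+\log|h|$ and, as currents on $X$,
\begin{align*}
dd^c\log|\Phi|=k\,[E]+\theta,\qquad \theta:=dd^c\log|h|=\psi^*\omega_{\mathrm{FS}},
\end{align*}
where $\psi=[\,h_1:\cdots:h_M\,]:X\ra\mathbb P^{M-1}$ is the everywhere-defined morphism resolving the linear system of the $\pi^*z_j$ and $\omega_{\mathrm{FS}}$ is the Fubini--Study form. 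Comparing divisors of the coordinate sections shows $\psi^*\mathcal O_{\mathbb P^{M-1}}(1)\cong\mathcal O_X(-D)=\mathcal O_X(-kE)$, so $\theta$ is a smooth semipositive form representing $c_1(\mathcal O_X(-kE))=-k\,c_1(\mathcal O_X(E))$ near $E$.

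The crux is then to identify the atom at $0$, pulled up to $X$, with an intersection number along $E$. I expect the identity
\begin{align*}
\nu([Y],0)=\int_{[D]}\theta^{\,n-1}=k\int_{[E]}\theta^{\,n-1};
\end{align*}
heuristically, of the $n$ factors of $dd^c\log|\Phi|=k[E]+\theta$, exactly one contributes the divisorial current $k[E]$ and the remaining $n-1$ factors restrict the smooth form $\theta$ to $E$. Granting this, and using that $E$ is compact with $\theta|_E$ representing $-k\,c_1(\mathcal O_X(E)|_E)$,
\begin{align*}
\tm(Y,y)=k\int_{[E]}\theta^{\,n-1}=k\,(-k)^{n-1}\!\int_{[E]}c_1(\mathcal O_X(E))^{n-1}=(-1)^{n-1}k^nE^n,
\end{align*}
which is the asserted formula.

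The step I expect to be the main obstacle is the displayed identity $\nu([Y],0)=\int_{[D]}\theta^{\,n-1}$. Because $\log|\Phi|$ is unbounded exactly along the $(n-1)$-dimensional set $E$, the product $(dd^c\log|\Phi|)^n$ has no global meaning on $X$, so one cannot simply expand $(k[E]+\theta)^n$ and discard the ill-defined self-intersections $[E]^{\ge2}$. To make the localization rigorous I would work on the sublevel tubes $\{s<\log|\Phi|<\log r\}$, on whose open part $X\setminus E$ the integrand is the smooth form $\theta^n$, apply Demailly's Lelong--Jensen formula, and let the inner level $s\ra-\infty$, tracking that the boundary contributions on $\{\log|\Phi|=s\}$ converge to $\int_{[D]}\theta^{\,n-1}$; here Demailly's theory of Lelong numbers with non-isolated poles (equivalently, King's formula for the divisorial part of a Monge--Amp\`ere current) does the essential work. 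A more algebraic alternative would factor $\pi$ through the blow-up $\mathrm{Bl}_yY$ by the universal property of blowing up and combine the classical identity $e(\mathfrak M)=(-1)^{n-1}(\text{exceptional divisor})^n$ with the projection formula, but that route leaves the analytic framework developed here.
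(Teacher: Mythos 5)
Your plan is correct in substance, and it is essentially the paper's own treatment: the paper never proves Theorem \ref{thm:Ramanujam} as a standalone statement, but recovers it (second item of the Remark following Theorem \ref{thm:main thm}) as the invertible special case of Theorem \ref{thm:main thm}, whose proof in \S \ref{sec:lelong vs MA} has exactly your architecture. Your first step (all ideals equal to $\mathfrak M$ in Theorem \ref{thm:mult vs lelong}) and your decomposition $dd^c\pi^*\log|z|=k[E]+\theta$ are the paper's Siu decomposition $dd^c\pi^*\log|g_j|=\sum_i m_{j,i}[E_i]+T_j$, $T_j=\theta_j+dd^c\varphi_j$, specialized to the situation where invertibility makes the residual part smooth; your identification $\theta=\psi^*\omega_{\mathrm{FS}}$ with $\psi^*\mathcal O_{\mathbb P^{M-1}}(1)\cong\mathcal O_X(-kE)$ is a cleaner, more canonical way to produce that smooth representative than the paper's choice of an arbitrary smooth metric on $\mathcal O_X(D_j)$ (which dumps the discrepancy into a function $\varphi_j$ that is smooth precisely in the invertible case). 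Where you genuinely diverge is in how to prove the crux identity $\nu([Y],0)=\int_{[D]}\theta^{n-1}$: the paper regularizes the weight as $\log(|z|+\tfrac{1}{k})$, inserts cutoffs $\chi_r=\pi^*\tilde\chi_r$, and integrates by parts factor by factor, so that the divisorial part $k[E]$ ends up paired only against smooth closed forms — producing $(-1)^{n-1}(kE)^n$ — while the leftover term is $\lim_{r\to0^+}\int_X\chi_r\,\theta^n$, i.e.\ the mass of $E$ for the smooth measure $\theta^n$, which vanishes; you instead propose Demailly's Lelong--Jensen formula on the tubes $\{s<\log|\Phi|<\log r\}$. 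That route is viable, but note that it is exactly where all the analytic content sits, and in your write-up it remains a sketch (``Granting this''): the serious point, which you correctly flag, is to rule out extra Monge--Amp\`ere mass concentrating on $E$ in the limit $s\ra-\infty$, and making the level-set bookkeeping for $\max(\log|\Phi|,s)$ precise is comparable in length to the paper's integration-by-parts argument. So: same reduction, same decomposition, same final cohomological evaluation $k\int_E\theta^{n-1}=(-1)^{n-1}k^nE^n$; the only real difference is the localization technology, and the one step you defer is the step the paper's Section \ref{sec:lelong vs MA} carries out in full (and in greater generality, without invertibility).
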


Theorem \ref{thm:Ramanujam} in the algebraic geometry setting was proved by Ramanujam, and in the analytic setting 
was proved by Tráng, following the idea of Ramanujam.

Here we should emphasize that, in Theorem \ref{thm:Ramanujam}, 
the condition on the invertibility of the ideal subsheaf in $\mathcal O_X$ generated by $\pi^{*}\mathfrak M_{Y,y}$
is indispensable. 
On the other hand, this condition is not satisfied for general desingularization.
For example,  the Grauert blow down $\pi:L\ra (Y,y)$ of a negative line bundle $L$ satisfies this condition 
if and only if $(L^*)^k$ is globally generated, where $k$ is the minimal integer such that $(L^*)^k$ has nonzero global holomorphic section (see Proposition \ref{prop:L globally generated}).
For this reason, to study the multiplicity of the Grauert blow downs of general negative vector bundles, 
we need to try to generalize Theorem \ref{thm:Ramanujam} to the case of general modifications,
without assuming that the associated inverse ideal subsheaf is invertible.

In the effort to do this, for a given desingularization $\pi:(X, E)\ra (Y,y)$ and $\mathfrak{M}$-primary ideals $\mathfrak U_j\subset\mathcal O_{Y,y}$,
we try to define some natural related objects that reflect the deviation for the ideal subsheaf of $\mathcal O_X$ generated by $\pi^{*}\mathfrak U_j$ from being invertible.
We assume that $E$ is a hypersurface in $X$, i.e. $E$ has codimension one.
In the following sequel, we always assume $\mathfrak{U}_j$ is generated by $g_j=(g_{j,1},\cdots,g_{j,N_j})$, $g_{j,k}\in\mathcal{O}_{Y,y}$, $j=1,\cdots,n$.
We introduce the following notations:
\begin{itemize}
\item
$E=\bigcup_{i=1}^N E_i$, $E_i$ are irreducible components of $E$,
\item
$D_j=\sum_{i=1}^N m_{i,j}E_i$, $m_{i,j}$ is the vanishing order of $\pi^*\mathfrak{U}_j$ along $E_i$,
\item
$dd^c\pi^*\log|g_j|=\sum_{i=1}^Nm_{i,j}[E_i]+T_j$ is the Siu's decomposition, $T_j$ is a closed positive $(1,1)$-current with analytic singularity $A_j\subsetneqq E$ (see the proof of Theorem \ref{thm:main thm} in \S \ref{sec:lelong vs MA} for details).
\end{itemize}

For $f\in\mathcal{O}(X)\backslash\{0\}$, we denote the vanishing order of $f$ along $E_i$ by $\mathrm{ord}_{E_i}(f)$. The vanishing order of $\pi^*\mathfrak{U}_j$ along $E_i$ is defined as $\min_{g\in\mathfrak{U}_j\backslash\{0\}}\{\mathrm{ord}_{E_i}(\pi^*g)\}$.

The analytic set $A_j\subset X$ reflects the deviation from being invertible for the ideal sheaf in $\mathcal O_X$ generated by $\pi^{*}(\mathfrak U_j)$,
and $A_j=\emptyset$ if and only if this ideal subsheaf is invertible.

\begin{thm}\label{thm:main thm}
Assume that the germ $(Y,y)$ is locally irreducible and reduced, then it holds that
\begin{align*}
e\big(\mathfrak{U}_1;\cdots;\mathfrak{U}_n\big)=(-1)^{n-1}(D_1)\cdot...\cdot(D_n)+\int_ET_1 \wedge \cdots\wedge T_n,
\end{align*}
provided that $\{A_j\}$ satisfies the dimension conditions
$$\mathrm{codim}(A_{j_1}\cap\cdots\cap A_{j_m})\geq m$$ 
for all choices of indices $j_1<\cdots<j_m$ in $\{1,\cdots,n\}$. 
\end{thm}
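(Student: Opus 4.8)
The plan is to push the whole computation to the resolution $X$, where it becomes a Monge-Amp\`ere mass on the compact fiber $E$, and then to separate the divisorial and residual parts by a telescoping expansion.

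\emph{Step 1 (reduction to a Monge-Amp\`ere mass on $E$).} By Theorem~\ref{thm:mult vs lelong},
\begin{align*}
e(\mathfrak{U}_1;\cdots;\mathfrak{U}_n)=\nu([Y],0;\log|g_1|,\cdots,\log|g_n|),
\end{align*}
which is the mass carried by $\{0\}$ under the measure $[Y]\wedge\bigwedge_{j}dd^c\log|g_j|$. Writing $u_j:=\pi^{*}\log|g_j|$, the compatibility of Monge-Amp\`ere operators with the modification $\pi$ (using $\pi_{*}[X]=[Y]$, that $\pi$ is biholomorphic over $Y\setminus\{y\}$, and that the unbounded locus of each $u_j$ lies over $y$) transfers this mass to the fiber $E=\pi^{-1}(y)$:
\begin{align*}
e(\mathfrak{U}_1;\cdots;\mathfrak{U}_n)=\int_{E}dd^c u_1\wedge\cdots\wedge dd^c u_n.
\end{align*}
The codimension hypotheses on the $A_j$ are Demailly's conditions ensuring that this product, and every product appearing below, is a well-defined closed positive current.

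\emph{Step 2 (telescoping).} Insert the Siu decompositions $dd^c u_j=D_j+T_j$, so that $D_j=dd^c u_j-T_j$ is supported on $E$, and expand by multilinearity via the telescoping identity
\begin{align*}
\bigwedge_{j=1}^{n}dd^c u_j-\bigwedge_{j=1}^{n}T_j=\sum_{k=1}^{n}\Big(\bigwedge_{j<k}dd^c u_j\Big)\wedge D_k\wedge\Big(\bigwedge_{j>k}T_j\Big).
\end{align*}
Every summand carries the factor $D_k$, hence is a measure supported on $E$; evaluating the mass carried by $E$ gives
\begin{align*}
e(\mathfrak{U}_1;\cdots;\mathfrak{U}_n)-\int_{E}T_1\wedge\cdots\wedge T_n=\sum_{k=1}^{n}\int\Big(\bigwedge_{j<k}dd^c u_j\Big)\wedge D_k\wedge\Big(\bigwedge_{j>k}T_j\Big).
\end{align*}
It remains to identify the right-hand side with $(-1)^{n-1}(D_1)\cdots(D_n)$.

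\emph{Step 3 (identification with the intersection number).} Equip each line bundle $\mathcal{O}_X(D_j)$ with a smooth Hermitian metric that is flat outside a neighborhood of the compact set $E$, and let $\Theta_j$ be its Chern curvature form; then $\Theta_j$ is smooth with cohomologically compact support near $E$, one has $(D_1)\cdots(D_n)=\int_X\Theta_1\wedge\cdots\wedge\Theta_n$, and Poincar\'e-Lelong yields $D_j-\Theta_j=dd^c\gamma_j$ and $T_j+\Theta_j=dd^c\rho_j$ with $\gamma_j$ compactly supported near $E$. In the $k$-th summand I would first replace $D_k$ by $\Theta_k$; the error $\int(\bigwedge_{j<k}dd^c u_j)\wedge dd^c\gamma_k\wedge(\bigwedge_{j>k}T_j)$ vanishes by Stokes, since $\gamma_k$ has compact support and all remaining factors are closed. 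For $k\ge 2$ the integral still contains the factor $dd^c u_1$; integrating by parts once more (legitimate because $\Theta_k$ is smooth with compact support and the other factors are closed) kills this summand. Only $k=1$ survives, equal to $\int\Theta_1\wedge T_2\wedge\cdots\wedge T_n$; substituting $T_j=dd^c\rho_j-\Theta_j$ for $j\ge 2$ and discarding each $dd^c\rho_j$ term by the same Stokes argument leaves
\begin{align*}
\int\Theta_1\wedge(-\Theta_2)\wedge\cdots\wedge(-\Theta_n)=(-1)^{n-1}\int_X\Theta_1\wedge\cdots\wedge\Theta_n=(-1)^{n-1}(D_1)\cdots(D_n),
\end{align*}
which is the claimed identity.

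\emph{Main obstacle.} The delicate point is the legitimacy of the multilinear expansion and of the repeated integration by parts for closed positive currents that are singular along $E$: the well-definedness of the products $(\bigwedge_{j<k}dd^c u_j)\wedge D_k\wedge(\bigwedge_{j>k}T_j)$, the vanishing of the $dd^c\gamma_k$- and $dd^c\rho_j$-corrections, and the vanishing of the terms with $k\ge 2$ must all be justified inside Demailly's theory of wedge products of currents with analytic singularities. This is exactly where the conditions $\mathrm{codim}(A_{j_1}\cap\cdots\cap A_{j_m})\ge m$ enter: they force the singular loci to meet in high enough codimension that the products exist and the Stokes formulas hold (via regularization of the potentials and the monotone continuity of the Monge-Amp\`ere operator). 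Tracking the sign $(-1)^{n-1}$ then amounts to following the negativity encoded in $T_j=dd^c\rho_j-\Theta_j$ through the $n-1$ surviving residual factors.
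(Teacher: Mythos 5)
There is a genuine gap, and it is precisely the point the paper's own proof is built to avoid. Your Steps 1--3 presuppose that the currents $\bigwedge_{j}dd^c u_j$, the telescoping summands $\big(\bigwedge_{j<k}dd^c u_j\big)\wedge D_k\wedge\big(\bigwedge_{j>k}T_j\big)$, and the Stokes corrections involving $dd^c\gamma_k$ (with $\gamma_k=\log|s_k|_{h_k}$ singular along $E$) are well-defined on $X$, and you assert that the hypotheses $\mathrm{codim}(A_{j_1}\cap\cdots\cap A_{j_m})\geq m$ guarantee this. They do not. Since each $\mathfrak{U}_j$ is $\mathfrak{M}$-primary, the unbounded locus of every $u_j=\pi^*\log|g_j|$ is the \emph{whole} exceptional set $E$, which has codimension one, and the supports of the $D_k$ and the singularities of the $\gamma_k$ are also all of $E$; so any product with two or more such factors already violates Demailly's codimension criterion, no matter what the $A_j$ (which are the singular loci of the \emph{residual} parts $T_j$ only) satisfy. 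Even the milder products you need, such as $dd^c u_1\wedge T_{j_1}\wedge\cdots\wedge T_{j_m}$, require $\mathrm{codim}\,(E\cap A_{j_1}\cap\cdots\cap A_{j_m})\geq m+1$; but $A_j\subset E$, so the theorem's hypothesis only yields $\mathrm{codim}\geq m$. The paper states this obstruction explicitly when it abandons the naive iteration (``This is stronger than our hypothesis \dots so we have to modify our proof''). In short, your proposal is essentially the naive route that fails under the stated assumptions, and the assertion in your ``Main obstacle'' paragraph that the codimension conditions legitimize all the products is false.

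What is missing is the mechanism that makes every intermediate object smooth. The paper regularizes the weights as $\log(|g_j|+\tfrac{1}{k_j})$, computes the Lelong number as an iterated limit against a cutoff $\chi_r=\pi^*\tilde\chi_r$ with $\chi_r\equiv 1$ near $E$, and, each time a limit $k_j\to\infty$ produces the singular potential $\varphi_j$ (from $T_j=\theta_j+dd^c\varphi_j$), immediately integrates by parts to replace $\chi_r\,dd^c\varphi_j$ by $\varphi_j\,dd^c\chi_r$. Because $dd^c\chi_r$ is supported away from $E$, where $\varphi_j$ is smooth, all integrands remain smooth forms; the divisorial parts $\sum_i m_{j,i}[E_i]$ drop out at each stage because they pair with forms that are exact near $E$ (or vanish near $E$). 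Only after all limits are taken are the dimension conditions on the $A_j$ invoked -- and only once -- to undo the integrations by parts and reassemble the smooth remainder into $\int_E T_1\wedge\cdots\wedge T_n$, while the single divisorial term gives $(-1)^{n-1}(D_1)\cdots(D_n)$. Without this regularize-and-shift-to-$dd^c\chi_r$ device (or some substitute for it), your telescoping identity and the subsequent Stokes arguments cannot be carried out under the stated hypotheses.
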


\begin{rem} Remarks about Theorem \ref{thm:main thm}:
\bi
\item The dimension conditions ensure that the Monge-Amp\`ere products $T_1\wedge\cdots\wedge T_n$ is a well-defined measure on $X$, and the integration $\int_ET_1 \wedge \cdots\wedge T_n$
represents the mass of $E$ with respect to this measure.
On the other hand, we indeed get a formula for $e\big(\mathfrak{U}_1;\cdots;\mathfrak{U}_n\big)$ even without this codimension condition, 
which seems deserve to study further (see the proof of this theorem).
\item If the ideal subsheaf of $\mathcal O_X$ generated by $\pi^*\mathfrak{U}_j$ is invertible, then $T_j$ is smooth and we get
\begin{align*}
e(\mathfrak{U}_1,\cdots,\mathfrak{U}_n)=(-1)^{n-1}(D_1)\cdot...\cdot(D_n).
\end{align*}
which is a generalization of Theorem \ref{thm:Ramanujam} and is already known (see \cite[Sec 1.6 B]{Laz04}, \cite[Theorem 2.1]{Le09}). 
\item Unfortunately we do not know if the first term $(-1)^{n-1}(D_1)\cdot...\cdot(D_n)$ in the left hand side of the equality is positive in general.
\ei
\end{rem}

The dimension conditions on $A_j$ in Theorem \ref{thm:main thm} always hold if $n=2$, so we have the following corollary.

\begin{cor}\label{cor:two dim}
Assume the germ $(Y,y)$ is locally irreducible and reduced and $(Y,y)$ is of dimension 2, it holds that
\begin{align*}
e\big(\mathfrak{U}_1;\mathfrak{U}_2\big)=-(D_1)\cdot(D_2)+\int_ET_1 \wedge T_2.
\end{align*}
\end{cor}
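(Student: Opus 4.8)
The plan is to derive Corollary~\ref{cor:two dim} as the immediate specialization of Theorem~\ref{thm:main thm} to the case $n=2$, so the only thing that genuinely needs checking is that the dimension hypothesis on the analytic sets $A_1,A_2$ is automatic in dimension two. First I would recall the general statement of Theorem~\ref{thm:main thm}, which gives
\begin{align*}
e\big(\mathfrak{U}_1;\cdots;\mathfrak{U}_n\big)=(-1)^{n-1}(D_1)\cdot\cdots\cdot(D_n)+\int_E T_1\wedge\cdots\wedge T_n
\end{align*}
under the codimension conditions $\mathrm{codim}(A_{j_1}\cap\cdots\cap A_{j_m})\geq m$ for every index subset $j_1<\cdots<j_m$. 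Setting $n=2$, the sign becomes $(-1)^{2-1}=-1$, so the intersection term reads $-(D_1)\cdot(D_2)$, matching the asserted formula; it remains only to verify the hypotheses so that Theorem~\ref{thm:main thm} applies.

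The core of the argument is to check the dimension conditions for $n=2$. There are two families of index subsets to consider. For the singleton subsets ($m=1$), the requirement is $\mathrm{codim}(A_j)\geq 1$ for $j=1,2$; this holds because each $A_j$ arises as the analytic singularity set of the closed positive $(1,1)$-current $T_j$ and is by construction a proper analytic subset of $E$, hence of $X$, so it has positive codimension. (When $A_j=\emptyset$ the condition holds vacuously.) For the one two-element subset ($m=2$, namely $j_1=1,j_2=2$), the requirement is $\mathrm{codim}(A_1\cap A_2)\geq 2$. Here I would argue as follows: $X$ has dimension $n=2$, and $A_1,A_2$ are proper analytic subsets of the hypersurface $E\subsetneqq X$, so $\dim A_j\leq \dim E - 1 \leq 0$; thus each $A_j$ is at most a discrete set of points, and so is their intersection $A_1\cap A_2$, which therefore has dimension $\leq 0$, i.e.\ codimension $\geq 2$ in the surface $X$. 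Hence both families of conditions are met without any additional assumption, and the hypotheses of Theorem~\ref{thm:main thm} are satisfied.

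The main obstacle I anticipate is a subtlety in the codimension bookkeeping rather than any deep difficulty: one must be careful that the relevant codimensions are computed inside $X$ (a space of dimension $2$) and that the $A_j$ are indeed of dimension at most $0$. This rests on the facts, taken from the setup preceding Theorem~\ref{thm:main thm}, that $E$ is a hypersurface in $X$ (codimension one) and that each $A_j$ is a \emph{proper} analytic subset of $E$. Granting these, $\dim A_j\leq 0$ follows, and the two-fold intersection cannot increase dimension. Once the dimension conditions are confirmed, I would simply invoke Theorem~\ref{thm:main thm} with $n=2$ to conclude
\begin{align*}
e\big(\mathfrak{U}_1;\mathfrak{U}_2\big)=-(D_1)\cdot(D_2)+\int_E T_1\wedge T_2,
\end{align*}
completing the proof. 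No separate computation of the mixed multiplicity or of the Monge--Amp\`ere mass is needed, since all of that is packaged into the cited theorem; the corollary is purely the observation that dimension two forces the hypotheses to hold.
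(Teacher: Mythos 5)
Your proposal follows exactly the paper's route: the paper proves this corollary by the single observation, made just before its statement, that the dimension conditions of Theorem \ref{thm:main thm} hold automatically when $n=2$, and then invoking that theorem; your sign bookkeeping $(-1)^{n-1}=-1$ is also correct. One step in your verification is imprecise, though. You deduce $\dim A_j\leq\dim E-1$ from the fact that $A_j$ is a \emph{proper} analytic subset of $E$; this inference is invalid when $E$ is reducible (in the paper's setup $E=\bigcup_{i=1}^N E_i$ may have several components), since a proper analytic subset of a reducible set can contain an entire irreducible component $E_i$ and thus have the same dimension as $E$. What is actually true, and what the paper records explicitly in the proof of Theorem \ref{thm:main thm} (the bullet stating that $\varphi_j$ has analytic singularities $A_j$ with $\mathrm{codim}(A_j)\geq 2$), is that the Siu decomposition subtracts from $dd^c\pi^*\log|g_j|$ the full generic Lelong number $m_{i,j}$ along \emph{every} component $E_i$, so the residual current $T_j$ has singular set $A_j$ containing no component of $E$; hence $\mathrm{codim}_X(A_j)\geq 2$ regardless of reducibility. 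With that fact your argument closes as intended: for $n=2$ each $A_j$ is at most a discrete set, $A_1\cap A_2\subseteq A_1$ has codimension $\geq 2$, and both dimension conditions of Theorem \ref{thm:main thm} hold.
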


If the condition in Theorem \ref{thm:main thm} is valid, then we have the inequality
\begin{align*}
e\big(\mathfrak{U}_1;\cdots;\mathfrak{U}_n\big)\geq(-1)^{n-1}(D_1)\cdot...\cdot(D_n).
\end{align*}
On the other hand, for this inequality to be true, the codimension condition in Theorem \ref{thm:main thm}
can be replaced by the irreducibility of $E$ and local positivity of the line bundle $\mathcal{O}_X(-E)$ on $X$.

\begin{thm}\label{thm:ineq}
Assume that the germ $(Y,y)$ is locally irreducible and reduced and $E$ is irreducible, if $\mathcal{O}_X(-E)$ is semi-positive on some neighborhood of $E$ in $X$, 
then it holds that
\begin{align*}
e\big(\mathfrak{U}_1;\cdots;\mathfrak{U}_n\big)\geq(-1)^{n-1}(D_1)\cdot...\cdot(D_n)\geq 0.
\end{align*}
\end{thm}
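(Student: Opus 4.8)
The plan is to treat the two inequalities separately, and to use the irreducibility of $E$ to reduce everything to statements about the single intersection number $E^n$ and the single Monge--Amp\`ere mass computing $e$. Since $E$ is irreducible we have $D_j=m_{1,j}E$, so
\[
(-1)^{n-1}(D_1)\cdots(D_n)=\Big(\prod_{j=1}^n m_{1,j}\Big)(-1)^{n-1}E^n,
\]
where each $m_{1,j}$ is a nonnegative integer (in fact positive, since $\pi$ contracts $E$ to $y$ and each $\mathfrak U_j$ is $\mathfrak M$-primary). The rightmost inequality $(-1)^{n-1}(D_1)\cdots(D_n)\geq 0$ is then the easy one: fixing a smooth metric $h$ on $\mathcal O_X(-E)$ near $E$ with semi-positive curvature $\theta\geq 0$, and using that $\mathcal O_X(E)|_E=N_{E/X}$ has first Chern form $-\theta|_E$, one computes $E^n=\int_E(-\theta|_E)^{n-1}$, so that $(-1)^{n-1}E^n=\int_E(\theta|_E)^{n-1}\geq 0$ by semi-positivity. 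Multiplying by the nonnegative factor $\prod_j m_{1,j}$ gives the claim.

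For the main inequality $e\geq(-1)^{n-1}(D_1)\cdots(D_n)$ I would return to the current-theoretic description used in Theorem \ref{thm:main thm}. By Theorem \ref{thm:mult vs lelong} and the proof of Theorem \ref{thm:main thm}, $e=\nu([Y],0;\log|g_1|,\dots,\log|g_n|)$ equals the total mass carried by $E$ of the Monge--Amp\`ere product $dd^c u_1\wedge\cdots\wedge dd^c u_n$, where $u_j=\pi^*\log|g_j|$ is plurisubharmonic near $E$ with Siu decomposition $dd^c u_j=m_{1,j}[E]+T_j$, $T_j\geq 0$. Here is where semi-positivity and irreducibility enter constructively: letting $s_E$ be the tautological section of $\mathcal O_X(E)$ and $h^{*}$ the dual metric, the function $\phi_E:=\log|s_E|_{h^{*}}$ is plurisubharmonic near $E$ and satisfies $dd^c\phi_E=[E]+\theta\geq 0$. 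Thus $[E]$ admits a single global psh potential on a neighborhood of $E$. Because $E$ is irreducible, $u_j$ and $m_{1,j}\phi_E$ carry the same generic Lelong number $m_{1,j}$ along $E$, so the residual $v_j:=u_j-m_{1,j}\phi_E$ is $m_{1,j}\theta$-psh and bounded above on the compact set $E$; hence $u_j\leq m_{1,j}\phi_E+C_j$ near $E$.

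With the domination $u_j\leq m_{1,j}\phi_E+O(1)$ in hand for every $j$, the plan is to apply the monotonicity (comparison) principle for mixed Monge--Amp\`ere masses, in the spirit of Lemma \ref{lem:comparison thm}, to obtain
\[
e=\mathrm{mass}_E\big(dd^c u_1\wedge\cdots\wedge dd^c u_n\big)\ \geq\ \Big(\prod_{j=1}^n m_{1,j}\Big)\,\mathrm{mass}_E\big((dd^c\phi_E)^n\big).
\]
It then remains to identify $\mathrm{mass}_E\big((dd^c\phi_E)^n\big)$ with $(-1)^{n-1}E^n=\int_E(\theta|_E)^{n-1}$. Writing $dd^c\phi_E=[E]+\theta$ and regularizing $\phi_E$ by a decreasing smooth family $\phi_E^\varepsilon\searrow\phi_E$, one expands the smooth masses and lets $\varepsilon\to 0$; since $\theta\geq 0$ is semi-positive, every mixed contribution $[E]^k\wedge\theta^{n-k}$ collapses, via restriction to $E$, to $\int_E(\theta|_E)^{n-1}$, yielding the desired value. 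Combining the two displays gives $e\geq(\prod_j m_{1,j})(-1)^{n-1}E^n=(-1)^{n-1}(D_1)\cdots(D_n)\geq 0$.

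The main obstacle is exactly the point where the codimension hypothesis of Theorem \ref{thm:main thm} was used: without it, neither the cross products $\bigwedge_j T_j$ nor the self-intersections of $[E]$ are a priori well-defined measures, so one cannot simply expand $\prod_j(m_{1,j}[E]+T_j)$ and drop nonnegative terms. The role of semi-positivity of $\mathcal O_X(-E)$ together with irreducibility of $E$ is precisely to supply the single psh potential $\phi_E$ with $dd^c\phi_E\geq 0$, which recasts the comparison into a genuine monotonicity between masses of wedges of \emph{positive} currents and forces the relevant cross terms to be nonnegative. The delicate step I expect to require the most care is justifying the monotonicity inequality in the second display for weights whose singularities lie along the non-isolated compact set $E$ rather than at a single point; I would establish it through simultaneous decreasing regularizations $u_j^\varepsilon\searrow u_j$ and $\phi_E^\varepsilon\searrow\phi_E$, Bedford--Taylor continuity along decreasing sequences, and the comparison theorem, while verifying that no mass escapes to the boundary of the chosen neighborhood of $E$.
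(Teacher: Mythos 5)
Your strategy is genuinely different from the paper's, but as written its central step contains a genuine gap. The key display
\[
e=\mathrm{mass}_E\big(dd^cu_1\wedge\cdots\wedge dd^cu_n\big)\ \geq\ \Big(\prod_{j=1}^n m_{1,j}\Big)\,\mathrm{mass}_E\big((dd^c\phi_E)^n\big)
\]
involves Monge--Amp\`ere masses that none of the tools you invoke actually define: every weight here has polar set equal to the hypersurface $E$, so the codimension conditions of Theorem \ref{thm:main thm} fail for any two factors; Lemma \ref{lem:comparison thm} is a statement about weights on $Y$ with an \emph{isolated} common zero, not about weights on $X$ with poles along $E$; and decreasing regularizations $u_j^{\varepsilon}\searrow u_j$ do not give Bedford--Taylor convergence of the measures near $E$, since the monotone convergence theorems require locally bounded limits or exactly the codimension hypotheses you are trying to avoid. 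What your plan actually requires is Demailly's formalism of generalized Lelong numbers for \emph{semi-exhaustive} psh weights (masses at a compact polar set defined via the truncations $\max(\varphi,s)$ and Stokes), for which plurisubharmonicity of $\phi_E$ --- i.e.\ precisely the semi-positivity of $\mathcal{O}_X(-E)$ --- is the entry ticket, together with the direct image formula $\nu(\pi_*T,\varphi)=\nu(T,\varphi\circ\pi)$ (\cite{Dem}, Chap.~III) to identify $e$, which by Theorem \ref{thm:mult vs lelong} lives downstairs on $Y$, with your upstairs mass along $E$; this identification is asserted in your first display but never proved. With those two ingredients your comparison argument (whose direction is correct: $u_j\leq m_{1,j}\phi_E+O(1)$ does give the inequality the right way) goes through; without them it is circular, because defining those masses is exactly the difficulty.

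A secondary but real error is the sketch of the model computation: expressions like $[E]^k\wedge\theta^{n-k}$ for $k\geq2$ are meaningless, and it is not true that ``every mixed contribution collapses to $\int_E(\theta|_E)^{n-1}$''. The correct mechanism is that the truncation $\max(\phi_E,s)$ is \emph{constant} on the neighborhood $\{\phi_E<s\}$ of $E$, so in the iterated integration by parts every term containing $[E]$ vanishes except the last one, which produces exactly $\int_E(\theta|_E)^{n-1}=(-1)^{n-1}E^n$. It is instructive to compare with the paper's proof, which is built to avoid all of these issues: it reruns the regularization computation of Theorem \ref{thm:main thm} (valid without any dimension condition) to express $e-(-1)^{n-1}(D_1)\cdots(D_n)$ as a limit of sums of integrals in which the singular potentials $\varphi_j$ only ever appear against $dd^c\chi_r$, hence over a region disjoint from $E$ where they are smooth; then, because $\theta_j=m_jdd^c\psi\geq0$, the truncations $\varphi_j^N=\max\{\varphi_j,-N\}$ are still $\theta_j$-psh and agree with $\varphi_j$ near $\mathrm{supp}(dd^c\chi_r)$, so the whole sum can be reassembled as $\int_X\chi_r(\theta_1+dd^c\varphi_1^N)\wedge\cdots\wedge(\theta_n+dd^c\varphi_n^N)\geq0$, a Bedford--Taylor product of positive currents with bounded potentials. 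Both arguments use semi-positivity to legitimize a truncation from below, but the paper applies it directly to the residual potentials, keeping the proof short and self-contained, whereas your route outsources the hard analysis to Demailly's semi-exhaustive theory and additionally needs the unproved direct-image step; it is salvageable, but incomplete as it stands.
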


We now consider applications of Theorem \ref{thm:main thm} to the study of the multiplicities of Grauert blow-downs of negative line bundles. 

Let $p:L \ra M$ be a holomorphic line bundle over a compact complex manifold $M$. 
As mentioned above, $L$ is ample if and only if its dual bundle $L^*$ is negative.
If $L$ is ample, the following datum encode important information of $L$:
\begin{itemize}
\item
\h{the initial order  $k_0$}: the minimal integer such that $L^{k_0}$ has non-zero global section,
\item
\h{the second order $k_1$}: the minimal integer such that $L^{k_1}$ is globally generated,
\item
\h{the third order $k_2$}: the minimal integer such that $L^{k_1}$ is very ample (will not be used),
\item
\h{the initial base locus} $B$: the base locus of $L^{k_0}$,
\item
\h{the volume $\mathrm{vol}(L)$ of $L$}:  given by $\int_Mc_1(L)^n$.
\end{itemize}

We might regard $B$ as a non-reduced complex space. 
It is defined rigorously as follows.
Let $\pi:L^*\ra (Z,z_0)$ be the normal Grauert blow down of $L^*$.
Assume $\mathfrak{M}_{Z,z_0}$ is generated by $(z_1,\cdots,z_M)$, as a current on the total space of $L^*$, $dd^c\pi^*\log|z|$ can be decomposed as
\begin{align*}
dd^c\pi^*\log|z|=k_0[M]+T,
\end{align*}
where $T$ is a closed positive current with singularity $B$.
King's fomula yields that 
\begin{align*}
\mathds{1}_B T^{p+1}=\sum_{j}\lambda_j[B_j], 
\end{align*}
where $\mathds{1}_B$ is the characteristic function of $B$ as a set, 
$p$ is the minimum of the codimensions of the irreducible components of $B$ in $M$, 
$\lambda_j\in\mathbb{N}^*$ and $B_j$ are the irreducible components of $B$ of codimension $p$ in $M$.
In the following, $B$ should be understood as the nonreduced analytic subspace in $L^*$ given by the formal sum $\sum_{j}\lambda_j B_j$.

The following result shows that the volume of $B$ can be controlled by $\mathrm{vol}(L)$:
\begin{thm}\label{thm:vol control}
With the above notations and assumptions, we have
\begin{align*}
\mathrm{vol}_B(L):=\sum_{j}\lambda_j\int_{B_j}c_1(L)^{n-p}\leq (k_0^pk_1-k_0^{p+1})\mathrm{vol}(L).
\end{align*}
\end{thm}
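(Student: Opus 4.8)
The plan is to read off each coefficient $\lambda_j$ as a Hilbert--Samuel multiplicity via King's formula, to bound that multiplicity locally by $(k_1-k_0)$ times the corresponding multiplicity of the base ideal of $L^{k_0}$ on $M$, and then to bound the resulting weighted sum of base-locus volumes by $k_0^p\,\mathrm{vol}(L)$ using that a Monge--Amp\`ere mass never exceeds its cohomological self-intersection. To begin, I would set up the local picture along the exceptional divisor $M\subset L^*$. Let $t$ be a local holomorphic fibre coordinate cutting out $M$, so that a section $\sigma\in H^0(M,L^{k_0})$, viewed as a function on $L^*$, is homogeneous of fibre-degree $k_0$, i.e. equals $\sigma(x)t^{k_0}$ in a frame. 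The Siu decomposition $dd^c\pi^*\log|z|=k_0[M]+T$ then means that locally $T=dd^c\log|\mathcal J|$, where $\mathcal J=(\zeta_i)$ is the ideal generated by $\zeta_i:=\pi^*z_i/t^{k_0}$ and $V(\mathcal J)=B$. King's formula, in the form recalled in the excerpt, gives $\lambda_j=e(\mathcal J\cdot\mathcal O_{L^*,\eta_j})$, the Hilbert--Samuel multiplicity of $\mathcal J$ at the generic point $\eta_j$ of the codimension-$(p+1)$ subvariety $B_j\subset L^*$; equivalently $\lambda_j=e(\mathcal J_0)$, where $\mathcal J_0$ is the restriction of $\mathcal J$ to a generic $(p+1)$-dimensional slice transverse to $B_j$, with coordinates $(x'',t)$, $x''\in\mathbb C^p$. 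Everything thus reduces to estimating $e(\mathcal J_0)$.

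The heart of the argument is a local estimate exploiting the two orders $k_0,k_1$. Since $\mathcal J$ depends only on $\mathfrak M$ and not on the chosen generators, and since $H^0(M,L^{k_0})\subset\mathfrak M$ consists of functions $\sigma(x)t^{k_0}$, the quotients $\pi^*\sigma/t^{k_0}=\sigma(x)$ lie in $\mathcal J$; hence $\mathfrak b_0\,\mathcal O\subset\mathcal J$, where $\mathfrak b_0$ is the base ideal of $|L^{k_0}|$ on $M$. On the other hand, choosing global generators $s_0,\dots,s_N$ of $L^{k_1}$ (available because $L^{k_1}$ is globally generated), the membership $s_l\in\mathfrak M$ gives $\pi^*s_l/t^{k_0}=s_l(x)\,t^{k_1-k_0}\in\mathcal J$; as some $s_l$ is nonvanishing at $\eta_j$, this yields $t^{k_1-k_0}\in\mathcal J_0$. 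Therefore
\[
\mathcal J_0\ \supset\ \mathfrak b_0\,\mathcal O_{\mathbb C^{p+1}}+(t^{\,k_1-k_0}),
\]
and since the Hilbert--Samuel multiplicity reverses inclusions,
\[
\lambda_j=e(\mathcal J_0)\ \le\ e\big(\mathfrak b_0\,\mathcal O_{\mathbb C^{p+1}}+(t^{\,k_1-k_0})\big)=(k_1-k_0)\,e(\mathfrak b_0)=(k_1-k_0)\,\nu_j,
\]
where $\nu_j:=e(\mathfrak b_0,B_j)$ and the middle equality is the standard multiplicity computation for an ideal of the shape $\mathfrak b_0+(t^m)$ in independent variables.

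It remains to globalize. Applying King's formula on $M$ to the curvature current $S$ of the singular Fubini--Study metric of $|L^{k_0}|$ (so $\{S\}=k_0\,c_1(L)$, with singularities along $B$) gives $\mathds{1}_B S^p=\sum_j\nu_j[B_j]$. Fixing a smooth semipositive $\alpha\in c_1(L)$ (possible as $L$ is ample), we obtain
\[
\sum_j\nu_j\int_{B_j}c_1(L)^{n-p}=\int_M\mathds{1}_B S^p\wedge\alpha^{n-p}\ \le\ \int_M S^p\wedge\alpha^{n-p}\ \le\ \{S\}^p\cdot\{\alpha\}^{n-p}=k_0^p\,\mathrm{vol}(L),
\]
where the first inequality uses $\mathds{1}_B S^p\le S^p$ together with $\alpha\ge 0$, and the second is the fact that the Monge--Amp\`ere mass of a current with analytic singularities does not exceed its cohomological self-intersection (via Demailly regularization and upper semicontinuity of mass). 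Combining this with the bound on $\lambda_j$ from the previous step yields
\[
\mathrm{vol}_B(L)=\sum_j\lambda_j\int_{B_j}c_1(L)^{n-p}\ \le\ (k_1-k_0)\sum_j\nu_j\int_{B_j}c_1(L)^{n-p}\ \le\ (k_0^pk_1-k_0^{p+1})\,\mathrm{vol}(L),
\]
which is the asserted inequality.

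The step I expect to be the main obstacle is the local multiplicity analysis: identifying $\lambda_j=e(\mathcal J_0)$ rigorously requires the refined form of King's formula, which matches the coefficient of $[B_j]$ in $\mathds{1}_B T^{p+1}$ with the Hilbert--Samuel multiplicity of $\mathcal J$ along $B_j$, together with the compatibility of that multiplicity with restriction to a generic transverse slice; the careful passage between the ideal $\mathcal J$ on the total space $L^*$ and the base ideal $\mathfrak b_0$ on $M$ (through the pure-fibre-degree sections) is the delicate point. By contrast, the computation $e(\mathfrak b_0+(t^m))=m\,e(\mathfrak b_0)$ and the cohomological bound on the Monge--Amp\`ere mass are comparatively routine. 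It is worth noting that the proof uses neither $k_2$ nor any property of $L^{k_1}$ beyond the single membership $t^{k_1-k_0}\in\mathcal J_0$ coming from global generation, and that it degenerates correctly when $k_1=k_0$ (then $B=\emptyset$ and both sides vanish).
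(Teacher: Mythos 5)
Your argument is correct in substance, but it follows a genuinely different route from the paper's. The paper proves Theorem \ref{thm:vol control} as an application of Theorem \ref{thm:main thm}: it takes the $\mathfrak{M}$-primary ideal $\mathfrak{U}\subset\mathcal{O}_{Z,z_0}$ induced by a generating family of $H^0(M,L^{k_1})$, evaluates the two mixed multiplicities $e(\mathfrak{U}^{[n+1-p]};\mathfrak{M}^{[p]})$ and $e(\mathfrak{U}^{[n-p]};\mathfrak{M}^{[p+1]})$ as intersection numbers plus Monge--Amp\`ere masses --- the first mass vanishes by the support theorem (since $T^p$ has bidimension $(n+1-p,n+1-p)$ while $\dim B=n-p$), and the second equals $k_1^{n-p}\mathrm{vol}_B(L)$ by King's formula --- and then concludes from the monotonicity $e(\mathfrak{U}^{[n-p]};\mathfrak{M}^{[p+1]})\leq e(\mathfrak{U}^{[n+1-p]};\mathfrak{M}^{[p]})$, valid because $\mathfrak{U}\subseteq\mathfrak{M}$. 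In particular the paper never needs to know the actual value of the coefficients $\lambda_j$; they enter only through the formula of Theorem \ref{thm:main thm}. Your proof is instead local-to-global and entirely independent of Theorem \ref{thm:main thm} and of mixed multiplicities: you identify each $\lambda_j$ as a Hilbert--Samuel multiplicity of $\mathcal{J}=(\pi^*z_i/t^{k_0})$ on a transverse slice, bound it by $(k_1-k_0)\nu_j$ via the ideal inclusion $\mathcal{J}\supset p^*\mathfrak{b}_0+(t^{k_1-k_0})$ (both memberships are justified exactly as in the paper's Proposition \ref{prop:L globally generated}, using normality of $Z$), and control $\sum_j\nu_j\int_{B_j}c_1(L)^{n-p}$ by $k_0^p\mathrm{vol}(L)$ cohomologically. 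This buys strictly finer information than the paper's aggregate inequality, namely the componentwise bound $\lambda_j\leq(k_1-k_0)\nu_j$; the price is that you need the refined form of King's formula identifying the coefficients with generic slice multiplicities (available from \cite{Dem}, or from Theorem \ref{thm:mult vs lelong} applied in slices), and the fact that Demailly Monge--Amp\`ere products of currents with analytic singularities preserve cohomology classes --- your second inequality is actually an equality by the monotone convergence theorem, so ``upper semicontinuity of mass'' is not quite the right justification, though the conclusion stands. One detail to repair: your ``generic $(p+1)$-dimensional slice with coordinates $(x'',t)$'' tacitly assumes the generic slice may be taken of product form $\Sigma'\times\mathbb{C}_t$; product slices need not be generic for $\mathcal{J}$, but this is harmless, since upper semicontinuity of multiplicities gives $\lambda_j\leq e(\mathcal{J}|_{\Sigma'\times\mathbb{C}_t})$ for \emph{any} transverse slice, and that is the only direction your argument uses.
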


Using Theorem \ref{thm:main thm} and inequalities of mixed multiplicities in \cite{RS78},\cite{Tei77}, we obtain an estimate for multiplicity of the Grauert blow down of $L^*$.

\begin{thm}\label{thm:mult of (Z,z_0)}
With the above notations and assumptions, we have
\begin{align*}
&\mathrm{mult}(Z,z_0)\geq k_0^{n+1}\mathrm{vol}(L)+(n+1-p)k_0^{n-p}\mathrm{vol}_B(L),\\
&\mathrm{mult}(Z,z_0)\leq k_0^{p+1}k_1^{n-p}\mathrm{vol}(L)+k_1^{n-p}\mathrm{vol}_B(L).
\end{align*}

\end{thm}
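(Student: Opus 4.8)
The plan is to apply Theorem \ref{thm:main thm} to the Grauert blow-down $\pi:L^*\ra (Z,z_0)$ itself. Since $L^*$ is smooth and $Z$ is normal, $\pi$ is a desingularization in the sense of Definition \ref{def:desingularization}, its exceptional set being the zero section $E=M$, an irreducible hypersurface in the $(n+1)$-dimensional manifold $L^*$ (note $\dim Z=n+1$), and $\mathrm{mult}(Z,z_0)=e(\mathfrak M_{Z,z_0})$. I would use two $\mathfrak M_{Z,z_0}$-primary ideals: the maximal ideal $\mathfrak M:=\mathfrak M_{Z,z_0}$, with pullback of vanishing order $k_0$ along $M$ and residual current $T$ (from the given decomposition $dd^c\pi^*\log|z|=k_0[M]+T$) whose singularity $B$ has codimension $p+1$ in $L^*$; and the ideal $\mathfrak U$ generated by a basis of $H^0(M,L^{k_1})$. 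Because $L^{k_1}$ is globally generated, these sections have no common zero off $M$, so $\mathfrak U$ is $\mathfrak M$-primary, $\pi^*\mathfrak U$ is invertible of order $k_1$ along $M$, and its residual current $T_{\mathfrak U}$ is smooth and restricts to a form in the class $k_1c_1(L)$ on $M$. I would also record the containment $\mathfrak U\subseteq\mathfrak M$.

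The computational core is to evaluate the two terms of Theorem \ref{thm:main thm} for the mixed multiplicities $e(\mathfrak M^{[a]};\mathfrak U^{[n+1-a]})$. For the intersection term, the normal bundle of $M$ in $L^*$ is $L^*$, so $(M)^{n+1}=\int_Mc_1(L^*)^n=(-1)^n\mathrm{vol}(L)$, and hence $(-1)^n(k_0M)^{a}(k_1M)^{n+1-a}=k_0^ak_1^{n+1-a}\mathrm{vol}(L)$. For the residual term, $T$ puts no mass on $B$ in powers $\le p$, so $\int_MT^a\wedge T_{\mathfrak U}^{n+1-a}=0$ for $a\le p$; at $a=p+1$, King's formula $\mathds 1_BT^{p+1}=\sum_j\lambda_j[B_j]$ together with the smoothness of $T_{\mathfrak U}$ gives $\int_MT^{p+1}\wedge T_{\mathfrak U}^{n-p}=\sum_j\lambda_j\int_{B_j}(k_1c_1(L))^{n-p}=k_1^{n-p}\mathrm{vol}_B(L)$. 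Thus $e(\mathfrak M^{[p+1]};\mathfrak U^{[n-p]})=k_0^{p+1}k_1^{n-p}\mathrm{vol}(L)+k_1^{n-p}\mathrm{vol}_B(L)$. A point to verify here is that the codimension hypothesis of Theorem \ref{thm:main thm} holds for up to $p+1$ copies of $\mathfrak M$, which it does since $\mathrm{codim}_{L^*}B=p+1$.

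For the upper bound I would invoke monotonicity of mixed multiplicities (part of the Rees--Sharp/Teissier circle of inequalities): since $\mathfrak U\subseteq\mathfrak M$, replacing $n-p$ copies of $\mathfrak M$ by $\mathfrak U$ can only enlarge the multiplicity, so
\[
\mathrm{mult}(Z,z_0)=e(\mathfrak M^{[n+1]})\le e(\mathfrak M^{[p+1]};\mathfrak U^{[n-p]})=k_0^{p+1}k_1^{n-p}\mathrm{vol}(L)+k_1^{n-p}\mathrm{vol}_B(L),
\]
which is precisely the asserted upper estimate.

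For the lower bound I would instead apply Theorem \ref{thm:main thm} in the extended form valid without the codimension condition (as flagged in the remark after it), with all ideals equal to $\mathfrak M$, obtaining $\mathrm{mult}(Z,z_0)=k_0^{n+1}\mathrm{vol}(L)+\int_MT^{n+1}$, and then bound the residual Monge--Amp\`ere mass from below by $\int_MT^{n+1}\ge(n+1-p)k_0^{n-p}\mathrm{vol}_B(L)$. This residual estimate is the main obstacle. The product $T^{n+1}$ is no longer a priori meaningful, since more than $p+1$ factors of $T$ violate the dimension condition, and the coefficient $(n+1-p)$ is genuinely smaller than the value $\binom{n+1}{p+1}$ suggested by a naive local product model — indeed the upper bound already rules out that model, so the true mass of $T^{n+1}$ near $B$ is not determined by $(\mathrm{vol},\mathrm{vol}_B,k_0,p)$ alone. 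I would establish the estimate by peeling off the King cycle $\mathds 1_BT^{p+1}=\sum_j\lambda_j[B_j]$, restricting the remaining positive factors to each $B_j$, and controlling the resulting masses through the class $k_0c_1(L)$ by positivity and monotonicity of Monge--Amp\`ere masses, with the combinatorial weight $(n+1-p)$ emerging from the interaction of the $n-p$ horizontal directions along $B_j$ with the transverse King contribution; the mixed-multiplicity inequalities of \cite{RS78},\cite{Tei77} are what ultimately pin down this weight. Making rigorous sense of $\int_MT^{n+1}$ in the degenerate range and proving this lower estimate is where the real work lies.
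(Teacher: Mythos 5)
Your upper bound is correct and is exactly the paper's argument: the monotonicity $e(\mathfrak M^{[n+1]})\le e(\mathfrak U^{[n-p]};\mathfrak M^{[p+1]})$ for $\mathfrak U\subseteq\mathfrak M$, together with the evaluation of $e(\mathfrak U^{[n-p]};\mathfrak M^{[p+1]})$ via Theorem \ref{thm:main thm} and King's formula (and the codimension hypothesis does hold for at most $p+1$ copies of $T$, as you checked). The genuine gap is the lower bound. Your route requires first giving meaning to $\int_M T^{n+1}$, a Monge--Amp\`ere product with $n+1>p+1$ singular factors, which lies outside the range where such products are defined, and then proving $\int_M T^{n+1}\ge (n+1-p)k_0^{n-p}\mathrm{vol}_B(L)$; you explicitly defer both steps (``where the real work lies''), so the proposal is incomplete precisely at the first inequality of the theorem. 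The ``peeling off the King cycle'' sketch does not produce the coefficient $(n+1-p)$, and the extended form of Theorem \ref{thm:main thm} without the dimension conditions only yields a limit of smooth integrals, not a positive measure whose mass near $B$ can be isolated and estimated.

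The paper never forms $T^{n+1}$: it stays entirely within the range where Theorem \ref{thm:main thm} applies and obtains the coefficient $(n+1-p)$ by pure algebra. Writing $a_i:=e(\mathfrak U^{[n+1-i]};\mathfrak M^{[i]})$, Lemma \ref{lem:Rees-Sharp inequality} gives $a_i^2\le a_{i-1}a_{i+1}$, i.e.\ the ratios $a_{i+1}/a_i$ are non-decreasing in $i$; telescoping,
\begin{align*}
e(\mathfrak M^{[n+1]})=a_{n+1}=a_p\prod_{i=p}^{n}\frac{a_{i+1}}{a_i}\ \ge\ a_p\Big(\frac{a_{p+1}}{a_p}\Big)^{n+1-p}.
\end{align*}
Both $a_p=k_0^pk_1^{n+1-p}\mathrm{vol}(L)$ and $a_{p+1}=k_0^{p+1}k_1^{n-p}\mathrm{vol}(L)+k_1^{n-p}\mathrm{vol}_B(L)$ involve at most $p+1$ copies of $T$ and are exactly the two quantities already computed in the proof of Theorem \ref{thm:vol control}, whence
\begin{align*}
e(\mathfrak M^{[n+1]})\ \ge\ \Big(\frac{k_0}{k_1}+\frac{\mathrm{vol}_B(L)}{k_0^pk_1\mathrm{vol}(L)}\Big)^{n+1-p}k_0^pk_1^{n+1-p}\mathrm{vol}(L)\ \ge\ k_0^{n+1}\mathrm{vol}(L)+(n+1-p)k_0^{n-p}\mathrm{vol}_B(L),
\end{align*}
the last step keeping only the first two terms of the binomial expansion. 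So the weight $(n+1-p)$ is simply the binomial coefficient $\binom{n+1-p}{1}$, not a geometric interaction along the $B_j$; this is how the Rees--Sharp/Teissier inequalities, which you correctly suspected were relevant, actually enter. (The paper treats $B=\emptyset$ separately: then $f^*\mathfrak M$ is invertible by Proposition \ref{prop:L globally generated} and $\mathrm{mult}(Z,z_0)=k_0^{n+1}\mathrm{vol}(L)$ exactly, so both inequalities are trivial.)
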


Of course similar result can be formulated for negative vector bundles of higher rank,
but we omit it here since the presentation will involve some nonessential complexity. 

From Theorem \ref{thm:mult of (Z,z_0)}, we can see that $L^{k_0}$ is globally generated if and only if $\mathrm{mult}(Z,z_0)=k_0^{n+1}\mathrm{vol}(L)$. 
This result can be generalized to holomorphic vector bundles of higher rank.
We first recall the definition of Segre classes.
Let $E$ be a holomorphic vector bundle of rank $r$ over $M$.
Let 
$$c(E)=1+c_1(E)+c_2(E)+\cdots\in H^\bullet(M,\mathbb Z)$$
be the total Chern class of $E$.
Then totall Segre class of $E$ is defined to be 
$$s(E)=c(E)^{-1}=1+s_1(E)+s_2(E)+\cdots.$$
For example, if $E$ is a line bundle, then $s_k(E)=(-1)^kc_1(E)^k$.

\begin{cor}\label{cor:g.g case}
Let $M$ be a compact manifold of dimension $n$, $E$ be an ample vector bundle over $M$. 
Suppose $(Z,z_0)$ is the normal Grauert blow-down of the dual bundle $E^*$, then $E$ is globally generated if and only if
\begin{align*}
\mathrm{mult}(Z,z_0)=\int_Ms_n(E^*).
\end{align*}
\end{cor}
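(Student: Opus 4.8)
=== PROOF PROPOSAL FOR COROLLARY \ref{cor:g.g case} ===

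\emph{Plan.} The strategy is to reduce the higher-rank statement to the structure already established for line bundles by passing to the projectivized bundle. First I would set up the standard construction: over $M$ form $\mathbb{P}(E^*)$ (or $\mathbb{P}(E)$, in whichever convention makes the tautological bundle negative) with its tautological line bundle, so that the Grauert blow-down of $E^*$ is realized through the blow-down of a negative line bundle on this projectivization. The key point is that ampleness of $E$ is, by definition, equivalent to the tautological line bundle being ample over $\mathbb{P}(E^*)$, and under this correspondence global generation of $E$ translates precisely into global generation of that tautological line bundle. Crucially, the initial order $k_0$ equals $1$ in this setting, since an ample $E$ already has nonzero sections of $E$ itself (equivalently, the tautological line bundle has sections at the first power).

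\emph{Key steps.} With $k_0=1$ fixed, I would invoke the line-bundle dichotomy that the excerpt records just after Theorem \ref{thm:mult of (Z,z_0)}: namely that $L^{k_0}$ is globally generated if and only if $\mathrm{mult}(Z,z_0)=k_0^{n+1}\mathrm{vol}(L)$. Applying this to the tautological line bundle $L$ over the projectivized bundle (a base manifold of dimension $n+r-1$), global generation of $E$ becomes equivalent to $\mathrm{mult}(Z,z_0)$ equalling the volume term. The remaining task is to identify this volume with the Segre number $\int_M s_n(E^*)$. This is where the projection formula enters: integrating the appropriate power of $c_1$ of the tautological line bundle over the fibers of $\mathbb{P}(E^*)\to M$ produces Segre classes of $E^*$ by the defining (Grothendieck) relation, so that
\begin{align*}
\mathrm{vol}(L)=\int_{\mathbb{P}(E^*)}c_1(L)^{\,n+r-1}=\int_M s_n(E^*).
\end{align*}
Combining the two equalities gives the claimed characterization.

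\emph{Main obstacle.} The delicate part is verifying that $\mathrm{mult}(Z,z_0)$ computed on $M$ for $E^*$ agrees with $\mathrm{mult}(Z,z_0)$ computed on $\mathbb{P}(E^*)$ for the tautological line bundle, i.e. that the two Grauert blow-downs produce the \emph{same} singularity germ $(Z,z_0)$. I expect this to follow because the blow-down of the total space of $E^*$ and the blow-down of the tautological line bundle over $\mathbb{P}(E^*)$ contract to isomorphic germs (both contract the relevant exceptional set to a point and are biholomorphic away from it), so the multiplicity is literally the same intrinsic invariant. Checking this identification carefully — including that normality and the normalization chosen in the line-bundle case match — is the step requiring the most attention. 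Once it is in place, the equivalence ``$E$ globally generated $\iff k_0=1$ case of the line-bundle criterion'' together with the Segre-class computation closes the argument; one should also confirm that the hypotheses of Theorem \ref{thm:main thm} (in particular the dimension conditions used to derive the line-bundle result) are met here, which they are since $k_0=1$ forces the relevant singularity current to be as simple as possible.
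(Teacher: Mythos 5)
Your reduction is structurally the same as the paper's: pass to $\mathbf{P}(E^*)$ with its tautological bundle, observe that composing with the natural map $\mathcal{O}_{\mathbf{P}(E^*)}(-1)\to E^*$ realizes the Grauert blow-down of the tautological bundle with the \emph{same} germ $(Z,z_0)$, translate global generation of $E$ into global generation of $\mathcal{O}_{\mathbf{P}(E^*)}(1)$, and compute $\mathrm{vol}(\mathcal{O}_{\mathbf{P}(E^*)}(1))=\int_M s_n(E^*)$ by the projection formula for Segre classes. All of that matches the paper and is correct, including your resolution of the ``same germ'' issue.

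The genuine gap is the step you yourself call crucial: the claim that ampleness of $E$ forces the initial order $k_0$ of $L:=\mathcal{O}_{\mathbf{P}(E^*)}(1)$ to be $1$, ``since an ample $E$ already has nonzero sections.'' This is false. Ample bundles need not have any nonzero global sections: a generic line bundle of degree $1$ on a compact Riemann surface of genus $2$ is ample with $H^0=0$ (its initial order is $2$), and since $H^0(\mathbf{P}(E^*),\mathcal{O}_{\mathbf{P}(E^*)}(1))\cong H^0(M,E)$, the same failure occurs for ample vector bundles. With $k_0$ unknown, your argument for the implication ``$\mathrm{mult}(Z,z_0)=\int_M s_n(E^*)\Rightarrow E$ globally generated'' collapses: if $k_0>1$, the dichotomy ``$L^{k_0}$ globally generated $\iff \mathrm{mult}(Z,z_0)=k_0^{\,n+r}\mathrm{vol}(L)$'' only tells you that $L^{k_0}$ is \emph{not} globally generated, which is no contradiction at all. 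What is needed — and what the paper actually uses — is the quantitative lower bound of Theorem \ref{thm:mult of (Z,z_0)} applied on the $(n+r-1)$-dimensional base $\mathbf{P}(E^*)$,
\begin{align*}
\mathrm{mult}(Z,z_0)\ \geq\ k_0^{\,n+r}\,\mathrm{vol}(L)+(n+r-p)\,k_0^{\,n+r-1-p}\,\mathrm{vol}_B(L),
\end{align*}
from which the hypothesis $\mathrm{mult}(Z,z_0)=\mathrm{vol}(L)>0$ \emph{first} forces $k_0=1$, and \emph{then} forces $\mathrm{vol}_B(L)=0$; since $c_1(L)>0$, one has $\mathrm{vol}_B(L)>0$ whenever the base locus $B$ is nonempty, so $B=\emptyset$ and $L$, hence $E$, is globally generated. (Your forward implication can be repaired trivially, since global generation of $E$ genuinely does produce sections and gives $k_0=k_1=1$, $\mathrm{vol}_B(L)=0$.) A second, smaller slip: $k_0=1$ does not make ``the relevant singularity current as simple as possible'' — even with $k_0=1$ the base locus of $L^{k_0}=L$ can be nonempty (e.g.\ $L=\mathcal{O}(P)$ on a positive-genus curve), so the dimension-condition and base-locus issues are not dissolved by $k_0=1$; excluding a nonempty $B$ is precisely the content of the positivity argument above.
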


The above discussion leads to the following natural question.
\begin{problem}
Suppose $f:(X,E)\ra (Z,z_0)$ be a desingularization with $E\subset X$ being a hypersurface. 
Let $\mathcal{I}:=f^*\mathfrak{M}$ be the pull-back of the maximal ideal $\mathcal{M}$ in $\mathcal{O}_{Z,z_0}$, and $k$ be the vanishing order of $\mathcal{I}$ along $E$.
Are the following two statements equivalent?
\begin{itemize}
\item[(1)]
$\mathcal{I}$ is invertible,
\item[(2)]
$\mathrm{mult}(Z,z_0)=(-1)^{n-1}(kE)^n$.
\end{itemize}
\end{problem}

We now specialize to the case that the base space $M$ is a compact Riemannian surface.
In this case, we can get an exact formula for $\mathrm{mult}(Z,z_0)$.

Let $M$ be a compact Riemann surface and $L$ be an ample line bundle on $M$.
We introduce the following notations:
\begin{itemize}
\item[$\bullet$]
$k_0$: the minimal integer such that $L^{k_0}$ has non-zero global section,
\item[$\bullet$]
$P_1,\cdots,P_N$: base points of $L^{k_0}$,
\item[$\bullet$]
$k_1,\cdots,k_N$: the minimal integer such that $L^{k_j}$ generates $P_j$.
\item[$\bullet$]
$d_{j,k}$: the vanishing order of $H^0(M,L^{k})$ (viewed as holomorphic functions on $L^*$) at $P_j$, $j=1,\cdots,N$, $k=k_0,\cdots,k_j$.
\end{itemize}

\begin{thm}\label{thm:RS}
With the above notations and assumptions, 
suppose that $(Z,z_0)$ is the normal Grauert blow-down of $L^*$, then
\begin{align*}
\mathrm{mult}(Z,z_0)=k_0^2\mathrm{deg}(L)+\sum_{j=1}^N\lambda_j,
\end{align*}
where $\lambda_j$ is the multiplicity of ideal generated by $\{z^{d_{j,k}}w^{k-k_0}:k=k_0,\cdots,k_j\}$ in $\mathcal{O}_{\mathbb{C}_{z,w}^2,0}$.
\end{thm}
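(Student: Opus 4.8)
The plan is to specialize Theorem \ref{thm:main thm} to the two-dimensional Riemann-surface setting, where $Y = (Z,z_0)$ has dimension $n=2$, and to exploit the special structure that the desingularization $\pi: L^* \to (Z,z_0)$ provides, with exceptional divisor $E = M$ the zero section. Since $\dim Z = 2$, Corollary \ref{cor:two dim} already tells us that
\begin{align*}
\mathrm{mult}(Z,z_0) = e(\mathfrak{M};\mathfrak{M}) = -(D)\cdot(D) + \int_E T\wedge T,
\end{align*}
where $dd^c\pi^*\log|z| = k_0[M] + T$ is Siu's decomposition with $D = k_0[M]$ the divisorial part. The first step is therefore to compute the self-intersection term $-(D)\cdot(D) = -k_0^2(M\cdot M)$. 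Here I would use that $M$ is the zero section of the negative line bundle $L^*$, so the normal bundle of $M$ in $L^*$ is $L^*$ itself, giving $(M\cdot M) = \deg(L^*) = -\deg(L)$. Thus $-(D)\cdot(D) = k_0^2\deg(L)$, matching the first term in the claimed formula.

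The second step is to identify $\int_E T\wedge T = \sum_{j=1}^N \lambda_j$ as a sum of local multiplicities supported at the base points $P_1,\ldots,P_N$ of $L^{k_0}$. The key observation is that away from the base locus $B = \{P_1,\ldots,P_N\}$, the current $\pi^*\mathfrak{M}$ generates an invertible ideal sheaf, so $T$ is smooth there and contributes nothing singular; the entire mass of $T\wedge T$ concentrates at the finitely many points $P_j$. At each $P_j$, I would set up local coordinates $(z,w)$ on $L^*$ near the point on the zero section lying over $P_j$, where $w$ is a fiber coordinate and $z$ is a base coordinate, and describe $\pi^*\mathfrak{M}$ concretely. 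The sections of $H^0(M, L^k)$, viewed as functions on $L^*$, pull back to monomials whose vanishing behavior at $P_j$ is encoded by the vanishing orders $d_{j,k}$ along the base direction and $k - k_0$ along the fiber direction; this is precisely where the generators $\{z^{d_{j,k}} w^{k-k_0} : k = k_0, \ldots, k_j\}$ of the ideal in $\mathcal{O}_{\mathbb{C}^2_{z,w},0}$ arise.

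The third step is to recognize $\int_{\text{near } P_j} T\wedge T$ as the Hilbert-Samuel multiplicity $\lambda_j = e(\mathfrak{a}_j)$ of this monomial ideal $\mathfrak{a}_j = (z^{d_{j,k}} w^{k-k_0})$ in $\mathcal{O}_{\mathbb{C}^2,0}$. For this I would invoke Theorem \ref{thm:mult vs lelong}, which identifies mixed multiplicities with generalized Lelong numbers: the local contribution of $T\wedge T$ at $P_j$ equals the generalized Lelong number of the smooth point $[\mathbb{C}^2]$ with respect to the weight $\log$ of the generators, which in turn equals $e(\mathfrak{a}_j)$. Summing over $j$ gives $\int_E T\wedge T = \sum_j \lambda_j$, completing the formula. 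The main obstacle I anticipate is the careful bookkeeping in the second step: correctly translating the global statement that $L^k$ fails to generate $P_j$ into the precise monomial ideal on the local model, and verifying that the Siu residual current $T$, which a priori could have subtler singularities, genuinely localizes to these monomial ideals with no stray mass escaping. I would need to confirm that $T$ has analytic singularities concentrated exactly at the $P_j$ and that the Lelong-number computation captures all of $\int_E T\wedge T$ without boundary contributions, using that the two-dimensional codimension condition of Theorem \ref{thm:main thm} holds automatically so the Monge-Ampère product $T\wedge T$ is a well-defined measure.
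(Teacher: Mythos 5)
Your outline reproduces the paper's proof skeleton: Corollary \ref{cor:two dim} plus the self-intersection computation $-(k_0M)\cdot(k_0M)=k_0^2\deg(L)$ is exactly the paper's first step, the localization of $T\wedge T$ at the base points is its second, and the identification of the local mass at $P_j$ with $e\big((z^{d_{j,k}}w^{k-k_0}:k=k_0,\cdots,k_j)\big)$ via Theorem \ref{thm:mult vs lelong} is its last. However, the step you defer as "the main obstacle" --- showing that the residual current $T$ has at $P_j$ \emph{precisely} the singularity type of that monomial ideal --- is the actual content of the paper's proof, and your proposal offers no argument for it; asserting that the generators $z^{d_{j,k}}w^{k-k_0}$ "arise" from the vanishing orders is not a proof, because the pullbacks $f_i=f^*z_i$ of the generators of $\mathfrak{M}_{Z,z_0}$ are not monomials, only infinite sums $f_i=\sum_{k\ge k_0}f_{i,k}$ with $f_{i,k}\in H^0(M,L^k)$. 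So as it stands there is a genuine gap.

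What must be proved is the two-sided estimate (the paper's $(*)$): in a fiber chart $(z,\lambda)$ near $P_j$ (your $(z,w)$),
\begin{align*}
\log|f|=\log\Big(\sum_{k=k_0}^{k_j}\big|z^{d_{j,k}}\lambda^{k}\big|\Big)+O(1).
\end{align*}
The upper bound uses that $d_{j,k}$ is by definition the minimal vanishing order at $P_j$ of sections of $L^k$, so $|f_{i,k}|\le C|z^{d_{j,k}}\lambda^{k}|$ for $k_0\le k\le k_j$, while the tail $k>k_j$ is dominated by the term $k=k_j$ since $d_{j,k_j}=0$. The lower bound is where normality of $Z$ enters (a point absent from your plan): a section $g_{j,k}\in H^0(M,L^k)$ with $\mathrm{ord}_{P_j}(g_{j,k})=d_{j,k}$ descends, by normality and Riemann extension across $z_0$, to a holomorphic function on $Z$, hence is a convergent power series in the $f_i$ with no constant term; this gives $|g_{j,k}|\le C(|f_1|+\cdots+|f_N|)$ and, summing over $k$, the reverse inequality. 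Only with $(*)$ in hand can one apply the comparison theorem (Lemma \ref{lem:comparison thm}) to replace $\int_{\{P_j\}}T^2$, where $T=dd^c\log\big(|f|/|\lambda|^{k_0}\big)$, by the Monge--Amp\`ere mass at the origin of $\log\big(\sum_{k}|z^{d_{j,k}}\lambda^{k-k_0}|\big)$, and then invoke Theorem \ref{thm:mult vs lelong} to convert that Lelong number into the Hilbert--Samuel multiplicity $\lambda_j$. Your steps before and after this estimate are correct and coincide with the paper; the estimate itself is the missing core.
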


\begin{rem}
The integers $\lambda_j$ can be calculated in an explicit and definite way, due to a result of Teissier \cite{Tei88}.
\end{rem}

As an example, we consider the canonical bundle $L=K_M$ of $M$.
If the genus $g$ of $M$ is bigger than 1,  then $K_M$ is ample and globally generated, 
so we can deduce from either Corollary \ref{cor:g.g case} or Theorem \ref{thm:RS} that 
$$\mathrm{mult}(Z,z_0)=2g-2,$$ 
which is a result proved by Morrow and Rossi \cite[Proposition 2.3, Proposition 5.5]{MR80}.
If $M$ is isomorphic to the Riemann sphere, then $K_M$ is negative and $K^*_M=TM$ is globally generated.
Also from either Corollary \ref{cor:g.g case} or Theorem \ref{thm:RS}, we see that the multiplicity of the Grauert blow down 
of $K_M$ is 2, equal to the Euler characteristic of $M$.

The above example naturally motivates us to consider complex manifolds of higher dimensions.
Let $M$ be a compact complex manifold such that $K_M$ is ample or negative.
We can define an intrinsic invariant $\tau(M)$ of $M$ as follows: $\tau(M)$ is the multiplicity of the Grauert blow down of $K_M$ if $K_M$ is negative, 
and is the multiplicity of the Grauert blow down of $K^*_M$ if $K_M$ is ample.
So it is natural to ask which properties of $M$ can be encoded by $\tau(M)$?
If $M$ is a Riemann surface, the above discussion shows that $\tau(M)=|\chi(M)|$, 
which is the absolute value of the Euler characteristic of $M$ and hence is a topological invariant.
On the other hand, we strongly believe that $\tau(M)$ should depend on the complex structure on $M$ if $M$ is of higher dimension.


We now specialize further to the case that $M$ is a compact Riemann surface of genus $g$, and the line bundle $L=L_P$ is generated by a single point $P\in M$, viewed as a divisor of $M$. 
We first recall some notions.
\begin{defn}
An integer $k\geq 1$ is called a \h{nongap} of $M$ at $P$ if there exists a meromorphic function $h$ on $M$ such that $p$ is the unique pole of $h$ and $\mathrm{ord}_P(h)=-k$; 
we call $P$ a \h{Weierstrass point} of $M$ if the minimal  nongap of $M $ at $P$ is $g+1$.
\end{defn}

\begin{cor}\label{cor:Weierstrass point}
Let $M$ be a compact Riemann surface of genus $g$, $P\in M$. Suppose $(Z,z_0)$ is the normal Grauert blow-down of $L_{-P}$, the line bundle generated by the divisor $-P$, then
\begin{align*} 
\mathrm{mult}(Z,z_0)=\text{the\ first\ nongap\ of\ $M$\ at\ $p$}.
\end{align*}
In particular, if $P$ is not a Weierstrass point of $M$, then
\begin{align*}
\mathrm{mult}(Z,z_0)=g+1.
\end{align*}
\end{cor}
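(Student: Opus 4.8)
The plan is to apply Theorem~\ref{thm:RS} to the ample line bundle $L=L_P=\mathcal{O}_M(P)$, whose dual is $L^{*}=L_{-P}$, and then to translate every quantity in that formula into the language of (non)gaps at $P$. First I would record the elementary data. We have $\deg(L)=\deg\mathcal{O}_M(P)=1$, and for $g\geq 1$ one has $h^0(M,\mathcal{O}_M(P))=1$: a nonconstant section would be a meromorphic function with a single simple pole, giving a degree-one map $M\to\mathbb{P}^1$ and forcing $g=0$. Hence the minimal positive power admitting a section is $k_0=1$. The unique section of $L$ up to scale is the constant $1$, whose divisor as a section of $\mathcal{O}_M(P)$ is exactly $P$; thus the base locus of $L^{k_0}$ is the single reduced point $P$, so in the notation of Theorem~\ref{thm:RS} we have $N=1$ and $P_1=P$.

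Next I would identify the secondary order $k_1$ and the vanishing orders $d_{1,k}$ with the gap data at $P$. A section of $L^{k}=\mathcal{O}_M(kP)$ is a meromorphic function $f$ with a pole of order at most $k$ at $P$, holomorphic elsewhere, and as a section of $\mathcal{O}_M(kP)$ its vanishing order at $P$ equals $\mathrm{ord}_P(f)+k$. Therefore such a section fails to vanish at $P$ precisely when $f$ has a pole of order exactly $k$, i.e. when $k$ is a nongap of $M$ at $P$. Writing $\gamma$ for the first nongap, this gives $k_1=\gamma$. Moreover $d_{1,k}$, the minimal vanishing order at $P$ among sections of $L^{k}$, equals $k-m_k$, where $m_k$ is the largest nongap $\leq k$ (and $m_k=0$ if there is none); hence $d_{1,k}=k$ for $1\leq k\leq\gamma-1$ and $d_{1,\gamma}=0$.

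Then I would substitute these values into the monomial ideal of Theorem~\ref{thm:RS}. With $k_0=1$, the generators $z^{d_{1,k}}w^{k-k_0}$ for $k=1,\dots,\gamma$ are $z=z^{1}w^{0}$ (from $k=1$), the monomials $z^{k}w^{k-1}$ for $1<k<\gamma$ (each divisible by $z$, hence redundant), and $w^{\gamma-1}=z^{0}w^{\gamma-1}$ (from $k=\gamma$). Thus the ideal is $(z,w^{\gamma-1})\subset\mathcal{O}_{\mathbb{C}^2_{z,w},0}$. This is a parameter ideal in a regular, hence Cohen--Macaulay, local ring, so its Hilbert--Samuel multiplicity equals its colength, $\lambda_1=\dim_{\mathbb{C}}\mathbb{C}\{z,w\}/(z,w^{\gamma-1})=\gamma-1$ (equivalently, a direct count gives $\dim_{\mathbb{C}}\mathcal{O}/(z,w^{\gamma-1})^{t}=(\gamma-1)\tfrac{t(t+1)}{2}$, with leading coefficient yielding multiplicity $\gamma-1$). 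Feeding this into Theorem~\ref{thm:RS},
\[
\mathrm{mult}(Z,z_0)=k_0^2\deg(L)+\lambda_1=1\cdot 1+(\gamma-1)=\gamma,
\]
the first nongap of $M$ at $P$. Finally, if $P$ is not a Weierstrass point, the classical Weierstrass gap theorem gives the generic gap sequence $\{1,2,\dots,g\}$, so the first nongap is $g+1$ and $\mathrm{mult}(Z,z_0)=g+1$.

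I expect the main obstacle to be the bookkeeping in the middle step: correctly matching the abstract invariants $k_0$, $k_1$, and $d_{1,k}$ of Theorem~\ref{thm:RS} with the pole-order description of sections of $\mathcal{O}_M(kP)$, and in particular verifying that after factoring out the common power $w^{k_0}$ along the exceptional set the only non-redundant generators are the two extremes $z$ and $w^{\gamma-1}$. Once the local ideal is pinned down as $(z,w^{\gamma-1})$, the multiplicity computation and the reduction to the Weierstrass gap theorem are routine.
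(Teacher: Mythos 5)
Your proposal is correct and follows essentially the same route as the paper: apply Theorem~\ref{thm:RS} to $L=L_P$, identify $k_0=1$, base locus $\{P\}$, $k_1=$ the first nongap $\gamma$, and compute $\lambda_1=\gamma-1$ from the monomial ideal, which here reduces to $(z,w^{\gamma-1})$. The paper's own proof is just a terser version of this (it states $k_0=1$, $d_{1,1}=1$, $k_1=\gamma$ and the value $\lambda_1=\gamma-1$ without the explicit pole-order bookkeeping and colength computation that you supply).
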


\begin{rem}
After attending two lectures in two conferences on the main results of the present work given by the first author, 
Takeo Ohsawa pointed out to the first author that the same result as in Corollary \ref{cor:Weierstrass point} was also proved by Goto and Watanabe and sent him the reference \cite[\S 5.2]{GW78};
later Takeo Ohsawa also sent to the first author two papers that discuss multiplicities of singularities of some special complex spaces \cite{Tom02}\cite{Tom24}.
Our method is based on pluripotential theory and very different from the ideas of the above references.
The authors are very grateful to Professor Ohsawa for his interest in their works and kindly providing the literatures.
\end{rem}

The remaining of the article is organized as follows. In \S \ref{sec:mult vs lelong}, we give the proof of Theorem \ref{thm:mult vs lelong}. In \S \ref{sec:lelong vs MA}, we give the proof of Theorem \ref{thm:main thm}, Corollary \ref{cor:two dim} and Theorem \ref{thm:ineq}, and in the final \S \ref{sec:blow down bundles}, we give the proof of the Theorem \ref{thm:vol control}, Theorem \ref{thm:mult of (Z,z_0)}, Theorem \ref{thm:RS} and Corollary \ref{cor:Weierstrass point}.\\


\h{Acknowledgements.}
This research is supported by National Key R\&D Program of China (No. 2021YFA1003100),
NSFC grants (No. 11871451, 12071310), and the Fundamental Research Funds for the Central Universities.

\section{Multiplicities and generalized Lelong numbers}\label{sec:mult vs lelong}
The aim of this section is to represent mixed multiplicities of analytic local algebras by generalized Lelong numbers and give the proof of Theorem \ref{thm:mult vs lelong}.
The discussion involves Lelong numbers of currents given by integration along analytic sets, for more details about this direction, we refer the readers to \cite{Dem93} (see also \cite[Chap III]{Dem}).

The following lemma is a special case of Demailly's comparison theorem of Lelong numbers \cite[Chap III Theorem 7.1, Theorem 7.8]{Dem}.
\begin{lem}\label{lem:comparison thm}
Let $(Y,y)\subseteq (\mathbb{C}^M,0)$ be a germ of analytic set of pure dimension $n$. 
Suppose $g_i=(g_{i,1},\cdots,g_{i,N_i})$, $h_j=(h_{j,1},\cdots,h_{j,N_j})$, $i,j=1,\cdots,n$ are tuples of germs of holomorphic functions in $\mathcal{O}_{\mathbb{C}^M,0}$ such that 
\begin{align*}
Y\cap\{g_{i,1}=\cdots=g_{i,N_i}=0\}=Y\cap\{h_{j,1}=\cdots=h_{j,N_j}=0\}=\{0\}.
\end{align*}
If
\begin{align*}
\limsup \frac{\log|g_j|}{\log|h_j|}(z)=l_j,\ \ \ \mathrm{when}\ z\in Y\setminus\{0\},\ z\ra0,
\end{align*}
it holds that
\begin{align*}
\nu([Y],0;\log|g_1|;\cdots;\log|g_n|)\leq l_1\cdots l_n \nu([Y],0;\log|h_1|;\cdots;\log|h_n|).
\end{align*}
\end{lem}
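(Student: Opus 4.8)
The plan is to deduce the inequality from Demailly's comparison theorem for a \emph{single} weight by exchanging the weights $\log|g_j|$ for $\log|h_j|$ one slot at a time. First I would record the reduction that turns a mixed Lelong number into an ordinary one. Since each tuple $g_j$ (resp. $h_j$) cuts out only the origin on $Y$, each $\log|g_j|$ (resp. $\log|h_j|$) is a plurisubharmonic weight that is locally bounded on $Y\setminus\{0\}$ and semi-exhaustive on $Y$, so all the Monge--Amp\`ere products below are well defined. For $0\le k\le n$ I introduce the hybrid current of bidimension $(1,1)$
\[
T_k=[Y]\wedge\bigwedge_{i<k}dd^c\log|h_i|\wedge\bigwedge_{j>k}dd^c\log|g_j|,
\]
together with the hybrid Lelong numbers
\[
I_k=\nu([Y],0;\log|h_1|,\cdots,\log|h_k|,\log|g_{k+1}|,\cdots,\log|g_n|),
\]
so that $I_0$ and $I_n$ are precisely the two quantities to be compared. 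The point is that $T_k$ is a closed positive current supported in $Y$ whose only singularity lies at $0$, and that $I_{k-1}=\nu(T_k,\log|g_k|)$ while $I_k=\nu(T_k,\log|h_k|)$ are generalized Lelong numbers of $T_k$ with respect to the \emph{single} weights $\log|g_k|$ and $\log|h_k|$.

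With this set-up, the core step applies Demailly's comparison theorem (\cite{Dem}, Chap.\ III Theorem 7.1) to the bidimension-$(1,1)$ current $T_k$ and the weights $\varphi=\log|g_k|$, $\psi=\log|h_k|$. Because $T_k$ has bidimension $(1,1)$, the exponent occurring in the comparison is $p=1$, so the hypothesis $\limsup_{z\to0,\,z\in Y}\tfrac{\log|g_k|}{\log|h_k|}(z)=l_k$ yields
\[
I_{k-1}=\nu(T_k,\log|g_k|)\le l_k\,\nu(T_k,\log|h_k|)=l_k\,I_k.
\]
Chaining these $n$ inequalities (all factors $l_k$ being non-negative, as ratios of negative logarithms) gives $I_0\le l_1\cdots l_n\,I_n$, which is exactly the assertion.

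I expect the main obstacle to be the verification that each intermediate current $T_k$ genuinely satisfies the hypotheses of the single-weight theorem, and that the two notions of Lelong number in play coincide. For the first, one must check that the wedge products defining $T_k$ exist as closed positive currents and that $\log|g_k|,\log|h_k|$ are semi-exhaustive on $\mathrm{Supp}\,T_k\subseteq Y$; this is where the isolated-singularity condition is used, via Bedford--Taylor theory for the locally bounded factors on $Y\setminus\{0\}$ together with Demailly's mass/support estimates near $0$. For the second, the statement defines the mixed Lelong number through the Euclidean balls $\{|z|<r\}$, whereas Demailly's generalized Lelong number is defined through the sublevel sets $\{\varphi<r\}$ of the weight itself; I would reconcile the two by noting that both exhaustions shrink to the common pole $\{0\}$ and that the Lelong number depends only on the mass carried at the pole set, so the two evaluations of $\nu(T_k,\log|g_k|)$ agree. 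Alternatively, one may bypass the telescoping and invoke the mixed form of the comparison theorem (\cite{Dem}, Chap.\ III Theorem 7.8) directly; the argument above is essentially its proof specialized to weights of the form $\log|g_j|$.
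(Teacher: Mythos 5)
Your proposal is correct, but you should know that the paper itself contains no proof of this lemma at all: it is introduced as ``a special case of Demailly's comparison theorem of Lelong numbers'' and disposed of by the citation to \cite[Chap.~III, Theorem 7.1, Theorem 7.8]{Dem}. So there is no argument in the paper to diverge from. Your telescoping scheme --- exchanging $\log|g_k|$ for $\log|h_k|$ one slot at a time, viewing the two hybrid masses $I_{k-1}$ and $I_k$ as single-weight generalized Lelong numbers $\nu(T_k,\log|g_k|)$ and $\nu(T_k,\log|h_k|)$ of the bidimension-$(1,1)$ current $T_k$, and applying the first comparison theorem with exponent $p=1$ so that each step contributes exactly one factor $l_k$ --- is precisely the mechanism by which Demailly's second (mixed) comparison theorem is deduced from the first. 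In other words, you have reconstructed the proof of the result the paper cites, rather than found a genuinely different route; your own closing remark acknowledges this. What your write-up adds beyond the bare citation is the two verifications you flag explicitly: well-definedness of the hybrid products (automatic here, since every weight has $\{0\}$ as its pole set on $Y$, so all codimension conditions on intersections of polar sets are trivially satisfied), and the agreement of the paper's ball-exhaustion definition with Demailly's sublevel-set definition (both compute the point mass at $0$ of the same measure, because the sublevel sets $\{\log|g_k|<r\}$ intersected with the support shrink to $\{0\}$).

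One step you use silently deserves a sentence. The identification $I_{k-1}=\nu(T_k,\log|g_k|)$ peels off the $k$-th factor \emph{last}, whereas the mixed Lelong number is a priori defined by multiplying the factors in a fixed order; so you are invoking the symmetry (commutativity) of Monge--Amp\`ere products of currents against psh weights with small unbounded locus, which holds in Demailly's theory but is a theorem, not a definition. Also, your indexing defines $T_k$ for $0\le k\le n$, but only $T_1,\dots,T_n$ have bidimension $(1,1)$ and are used; this is a harmless slip. Neither point is a gap in substance, but both should be cited or stated if the argument is written out in full.
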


\begin{rem}
Given $\mathfrak{M}_{Y,y}$-primary ideals $\mathfrak{U}_j$, $j=1,\cdots,n$, Lemma \ref{lem:comparison thm} implies that 
the mixed Lelong number $\nu([Y],0;\log|g_1|;\cdots;\log|g_n|)$ is independent of the choice of the generators of $\mathfrak{U}_j$.
\end{rem}

We first prove Theorem \ref{thm:mult vs lelong} for the case that $\mathfrak{U}_1=\cdots=\mathfrak{U}_n=\mathfrak{U}$. 

\begin{prop}\label{prop:single ideal}
Let $(Y,y)$ be a germ of $n$-dimensional locally irreducible reduced complex space, $\mathfrak{M}$ be the maximal ideal in $\mathcal{O}_{Y,y}$ and $\mathfrak{U}$ be an $\mathfrak{M}$-primary ideal. 
Assuming that $(Y,y)$ is embedded in some $(\mathbb{C}^M,0)$ and that $\mathfrak{U}$ is generated by $g_1,\cdots,g_N \in\mathcal{O}_{\mathbb{C}^M,0}$, then it holds that 
$$e(\mathfrak{U})=\nu([Y],\log|g|):=\nu([Y],0;\log|g|,\cdots, \log|g|).$$ 
\end{prop}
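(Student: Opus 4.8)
The plan is to reduce both quantities to the degree of a single generic branched covering attached to $\mathfrak{U}$: the multiplicity $e(\mathfrak{U})$ will be computed through Samuel's projection formula, and the Lelong number $\nu([Y],\log|g|)$ through the transformation law for Lelong numbers under ramified coverings.

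First I would replace the $N$ generators by a generic system of parameters. As the residue field $\mc$ is infinite, a generic tuple $h=(h_1,\dots,h_n)$ of $\mc$-linear combinations of $g_1,\dots,g_N$ is a minimal reduction of $\mathfrak{U}$; in particular $(h)$ and $\mathfrak{U}$ share the same integral closure, so that $e(\mathfrak{U})=e((h))$ (see \cite{Laz04}). Analytically, equality of integral closures is equivalent to a two-sided {\L}ojasiewicz comparison $C^{-1}|h|\le|g|\le C|h|$ on $Y$ near $0$, whence
\[
\limsup_{Y\ni z\to 0}\frac{\log|g|}{\log|h|}=\limsup_{Y\ni z\to 0}\frac{\log|h|}{\log|g|}=1.
\]
Applying Lemma \ref{lem:comparison thm} in both directions gives $\nu([Y],\log|g|)=\nu([Y],\log|h|)$. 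This reduces everything to the tuple $h$, which defines a finite map $h:(Y,0)\ra(\mc^n,0)$ with $h^{-1}(0)\cap Y=\{0\}$.

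Next I would realize $h$ as a ramified covering and identify both invariants with its degree. Since $\mathfrak{U}$ is $\mathfrak{M}$-primary and $Y$ is locally irreducible, $h$ is a finite surjective branched covering onto a neighborhood of $0$ in $\mc^n$ of some degree $\delta$, and $h_*[Y]=\delta[\mc^n]$ as currents. For the multiplicity, the ideal $(h)$ is exactly $h^*\mathfrak{M}_{\mc^n,0}\cdot\mathcal{O}_{Y,y}$, so Samuel's projection formula for the finite local extension $\mathcal{O}_{\mc^n,0}\hookrightarrow\mathcal{O}_{Y,y}$ yields $e((h))=\delta\cdot e(\mathfrak{M}_{\mc^n,0})=\delta$, the last equality because $\mc^n$ is regular. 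For the Lelong number, I would write $\log|h|=h^*\log|w|$ with $w$ the coordinates on $\mc^n$, so that $(dd^c\log|h|)^n=h^*\big((dd^c\log|w|)^n\big)$; since $(dd^c\log|w|)^n$ is the Dirac mass at $0$ of total mass $1$, the pushforward/projection formula for Monge-Amp\`ere masses gives
\[
\nu([Y],0;\log|h|)=\int_Y[Y]\wedge h^*\big((dd^c\log|w|)^n\big)=\int_{\mc^n}h_*[Y]\wedge(dd^c\log|w|)^n=\delta.
\]
Combining the two computations, $e(\mathfrak{U})=e((h))=\delta=\nu([Y],\log|h|)=\nu([Y],\log|g|)$, as desired.

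The step I expect to be the main obstacle is the last display: one must rigorously justify pulling back the singular Monge-Amp\`ere product $(dd^c\log|w|)^n$ along the branched covering $h$ and localizing its mass at the fibre $h^{-1}(0)=\{0\}$, since $\log|h|$ is singular and the currents are not smooth. I would carry this out within Demailly's framework \cite{Dem}, exploiting that $h$ is finite (so that both $h^*$ of the relevant potentials and $h_*[Y]$ make sense) and that the entire mass of $(dd^c\log|w|)^n$ sits at $0$, which is covered only by $0\in Y$. A secondary, more routine point is confirming that generic linear combinations form a reduction and deducing the {\L}ojasiewicz comparison used above.
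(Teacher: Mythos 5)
Your proposal is correct, and it runs on the same two engines that the paper itself singles out as essential --- Samuel's projection formula and the transformation of Lelong numbers under finite maps (\cite[Chap.~III, Prop.~9.3]{Dem}) --- but the architecture around them is genuinely different. The paper maps $Y$ by the full tuple $g$ into $\mc^N$, pushes $[Y]$ forward to $d[Z]$ with $Z=g(Y)$, invokes Thie's theorem \cite{Thi67} to identify $\nu([Z],\log|z|)$ with $\mathrm{mult}(Z,0)$, realizes that multiplicity by a generic linear projection $\mc^N\to\mc^n$, and then closes the gap between $e\big((g_1,\cdots,g_n)\big)$ and $e(\mathfrak{U})$ purely on the algebraic side, by exhibiting explicit Weierstrass-polynomial equations of integral dependence together with Rees's theorem \cite{Re61} (plus a self-contained direct argument). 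You instead pre-process the ideal: a generic $n$-tuple $h$ of linear combinations of the generators is a minimal reduction (Northcott--Rees, using that $\mc$ is infinite), which closes the gap on the algebraic side at once ($e(\mathfrak{U})=e((h))$, the elementary direction of Rees) and, via the analytic characterization of integral closure, yields the two-sided {\L}ojasiewicz bound $C^{-1}|h|\le |g|\le C|h|$ needed to close it on the analytic side through Lemma \ref{lem:comparison thm}; after that both invariants are read off directly from the covering $h:(Y,0)\to(\mc^n,0)$, with no need for the intermediate variety $Z$ or for Thie's theorem, since the target is smooth and the relevant Lelong number there is trivially $1$. What you pay for this economy is heavier standard machinery: existence of minimal reductions and the Lejeune-Jalabert--Teissier characterization of integral closure --- though the latter can be made elementary, since the reduction property gives, by the determinant trick, an equation $g^k+a_1g^{k-1}+\cdots+a_k=0$ with $a_i\in (h)^i$, and then $|g|^k\le Ck\max\big(|h|\,|g|^{k-1},|h|^k\big)$ forces $|g|\le C'|h|$. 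What the paper pays is the detour through $Z$ and the hands-on Weierstrass argument. Finally, the step you flagged as delicate --- pushing the Monge--Amp\`ere mass forward along the branched covering --- is precisely where the paper also leans on \cite[Chap.~III, Prop.~9.3]{Dem} (there applied to the map $g$ rather than $h$), and Samuel's formula is quoted there in exactly the form you use (\cite[Proposition A.10]{Mum76}); so your identification of where the real content lies matches the paper's proof.
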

\begin{proof}
The assumption that $\mathfrak{U}$ is $\mathfrak{M}$-primary implies that $Y\cap\{g_1=\cdots=g_N=0\}=\{0\}$. 
Therefore, the holomorphic map 
\begin{align*}
g:=(g_1,\cdots,g_N):(\mc^M_w,0)\rightarrow (\mc^N_z,0)
\end{align*}
induces a finite branched covering between varieties $Y$ and $Z:=g(Y)$ (see, e.g. \cite[Theorem 7, E]{Gun90}). 
Let $d$ be the degree of this covering, then we have $g_*[Y]=d[Z]$ in the sense of currents near $0\in\mc^N$. 
Using the transformation fomula of Lelong numbers under direct image \cite[Proposition 9.3, Chap III]{Dem}, we obtain that
\begin{align*}
\nu([Y],\log|g|)=\nu(g_*[Y],\log|z|)=d\nu([Z],\log|z|).
\end{align*}
By a famous theorem of Thie \cite{Thi67}, $\nu([Z],\log|z|)$ is the geometric multiplicity of the variety $Z$ at $0$. Therefore, up to a generic linear transform of $\mathbb{C}^N$, the projection
\begin{align*}
\pi:(\mathbb{C}^N,0)&\rightarrow (\mathbb{C}^n,0)\\
(z_1,\cdots,z_N)&\mapsto(z_1,\cdots,z_n)
\end{align*}
induces a branched covering $\pi:(Z,0)\rightarrow(\mathbb{C}^n,0)$ of degree $\nu([Z],\log|z|)$. 
Moreover, for every $k=n+1,\cdots,N$, there exists a Weierstrass polynomial
\begin{align*}
z_k^{s_k}-\sum_{j=1,\cdots, s_k}a_{j,k}(z_1,\cdots,z_{k-1})z_k^{s_k-j}=0\in\mathcal{O}_{Z,0} \tag{*}
\end{align*}
with coefficients $a_{j,k}(z_1,\cdots,z_{k-1}) \in \mathfrak{M}_{\mathbb{C}^{k-1},0}^j$ (see e.g. \cite[Proposition 4.8, Chap II]{Dem}).

Note that $\pi\circ g:Y\rightarrow\mathbb{C}^n$ is a finite branched covering of degree 
$$d\nu([Z],\log|z|)=\nu([Y],\log|g|).$$ 
By Samuel's fomula (see e.g. \cite[Proposition A.10]{Mum76}), the degree of $\pi\circ g$ coincides with $e\big((g_1,\cdots,g_n)\big)$, that is
\begin{align*}
\nu([Y],\log|g|)=e\big((g_1,\cdots,g_n)\big).
\end{align*}
We will complete the proof by showing that 
\begin{align*}
e\big((g_1,\cdots,g_n)\big)=e\big((g_1,\cdots,g_{n+1})\big)=\cdots=e\big((g_1,\cdots,g_N)\big)
\end{align*}
inductively. 
Indeed, $(*)$ implies that, for $k=n+1,\cdots,N$,
\begin{align*}
g_{k}^{s_{k}}-\sum_{j=1,\cdots, s_{k}}a_{j,n+1}(g_1,\cdots,g_{k-1})g_{k}^{s_{k}-j}=0\in\mathcal{O}_{Y,0}, \tag{**}
\end{align*}
where $a_{j,k}(g_1,\cdots,g_{k-1})\in (g_1,\cdots,g_{k-1})^j$.
Hence $g_{k}$ belongs to the integral closure of $(g_1,\cdots,g_{k-1})$, 
 a theorem of D. Rees \cite{Re61} yields that the multiplicity $e((g_1,\cdots,g_k))=e((g_1,\cdots,g_{k-1}))$. 

For the sake of the completeness, we provide
 a direct proof of the equality here. Set
\begin{align*}
P_n(k)&:=\dim_{\mathbb{C}}\big(\mathcal{O}_{Y,y}/(g_1,\cdots,g_n)^k \big),\\
P_{n+1}(k)&:=\dim_{\mathbb{C}}\big(\mathcal{O}_{Y,y}/(g_1,\cdots,g_{n+1})^k\big).
\end{align*}
It is clear that $P_{n}(k)\geq P_{n+1}(k)$, we are going to prove $P_{n}(k)\leq P_{n+1}(k+s_{n+1})$ by showing that
\begin{align*}
(g_1,\cdots,g_{n+1})^{k+s_{n+1}}\subseteq(g_1,\cdots,g_n)^k.
\end{align*}
If so, the desired equality $e\big((g_1,\cdots,g_n)\big)=e\big((g_1,\cdots,g_{n+1})\big)$ follows from the definition directly.

We take $g_1^{\alpha_1}\cdot...\cdot g_{n+1}^{\alpha_{n+1}}$, $ \alpha_1+\cdots+\alpha_{n+1}=k+s_{n+1}$. 
If $\alpha_{n+1}\leq s_{n+1}$, then $\alpha_1+\cdots+\alpha_n\geq k$, there is nothing to prove. If $\alpha_{n+1}>s_{n+1}$, we use $(**)$ for $k=n+1$:
\begin{align*}
g_{n+1}^{s_{n+1}}-\sum_{j=1,\cdots, s_{n+1}}a_{j,n+1}(g_1,\cdots,g_n)g_{n+1}^{s_{n+1}-j}=0,
\end{align*}
and $a_{j,n+1}(g_1,\cdots,g_n)=\sum_{\beta_1+\cdots+\beta_n=j}F_{j,\beta_1,\cdots,\beta_n}(g_1,\cdots,g_n)g_1^{\beta_1}\cdot...\cdot g_n^{\beta_n}$, 
which leads to
\begin{align*}
g_1^{\alpha_1}\cdot...\cdot g_{n+1}^{\alpha_{n+1}}=\sum^{s_{n+1}}_{j=1}\sum_{\beta_1+\cdots+\beta_n=j} F_{j,\beta_1,\cdots,\beta_n}(g_1,\cdots,g_n)g_1^{\alpha_1+\beta_1}\cdot...\cdot g_n^{\alpha_n+\beta_n}g_{n+1}^{\alpha_{n+1}-j}.
\end{align*}
If the degree of $g_{n+1}$ in some term is still greater than $s_{n+1}$, we just repeat the arguments as above. 
We finally get that
\begin{align*}
g_1^{\alpha_1}\cdot...\cdot g_{n+1}^{\alpha_{n+1}}=\sum G_{\gamma_1,\cdots,\gamma_n}(g_1,\cdots,g_n) g_{1}^{\gamma_1}\cdot...\cdot g_n^{\gamma_n},\gamma_1+\cdots+\gamma_n\geq k.
\end{align*}
This proves $(g_1,\cdots,g_{n+1})^{k+s_{n+1}}\subseteq(g_1,\cdots,g_n)^k$ as desired.
\end{proof}

We are going to prove Theorem \ref{thm:mult vs lelong}.

\begin{thm}(=Theorem \ref{thm:mult vs lelong})
Let $(Y,y)\subset(\mc^M,0)$ be a germ of $n$-dimensional irreducible reduced analytic subset, 
$\mathfrak{M}$ be the maximal ideal in $\mathcal{O}_{Y,y}$, and $\mathfrak{U}_1,\cdots,\mathfrak{U}_n \subset \mathcal{O}_{Y,y}$ 
be $n$ $\mathfrak{M}$-primary ideals.
Assuming that $\mathfrak{U}_j$ is generated by $g_{j,1},\cdots,g_{j,N_j} \in\mathcal{O}_{\mc^M,0}$, then it holds that 
\begin{align*}
e(\mathfrak{U}_1;\cdots;\mathfrak{U}_{n})=\nu([Y],0;\log|g_1|,\cdots,\log|g_n|).
\end{align*}
\end{thm}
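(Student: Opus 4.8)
The plan is to deduce the mixed statement from the already-established single-ideal case (Proposition \ref{prop:single ideal}) by a polarization argument: both sides of the desired identity will be recovered as coefficients of one and the same polynomial in auxiliary integer parameters $s_1,\dots,s_n$, and Proposition \ref{prop:single ideal} forces these two polynomials to coincide.

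First I would fix positive integers $s_1,\dots,s_n$ and form the single $\mathfrak{M}$-primary ideal $\mathfrak{A}:=\mathfrak{U}_1^{s_1}\cdots\mathfrak{U}_n^{s_n}$ (a product of $\mathfrak{M}$-primary ideals is again $\mathfrak{M}$-primary). A natural generating set for $\mathfrak{A}$ consists of all products of $s_j$ generators drawn from $\mathfrak{U}_j$, for $j=1,\dots,n$. The crucial elementary observation is that the associated weight factorizes \emph{exactly}: summing the squared moduli of all these products gives
\[
\sum_{\text{generators } h} |h|^2=\prod_{j=1}^n\Big(\sum_{k=1}^{N_j}|g_{j,k}|^2\Big)^{s_j},
\]
so that, with $\log|g_j|=\tfrac12\log\sum_k|g_{j,k}|^2$, the generator weight satisfies $\log|h|=\sum_{j=1}^n s_j\log|g_j|$ and hence $dd^c\log|h|=\sum_{j=1}^n s_j\,dd^c\log|g_j|$ as currents.

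Next I would apply Proposition \ref{prop:single ideal} to $\mathfrak{A}$, obtaining $e(\mathfrak{A})=\nu([Y],\log|h|)=\int[Y]\wedge(dd^c\log|h|)^n$, and then expand both ends as polynomials in $(s_1,\dots,s_n)$. On the algebraic side, the Definition-Theorem gives $\dim_{\mathbb{C}}(\mathcal{O}/\mathfrak{A}^t)=P(s_1t,\dots,s_nt)$ for $t\gg1$, whose leading term in $t$ yields
\[
e(\mathfrak{A})=\sum_{d_1+\cdots+d_n=n}\frac{n!}{d_1!\cdots d_n!}\,e\big(\mathfrak{U}_1^{[d_1]};\cdots;\mathfrak{U}_n^{[d_n]}\big)\,s_1^{d_1}\cdots s_n^{d_n}.
\]
On the analytic side, substituting $dd^c\log|h|=\sum_j s_j\,dd^c\log|g_j|$ and expanding the $n$-fold wedge power multilinearly gives
\[
\int[Y]\wedge(dd^c\log|h|)^n=\sum_{d_1+\cdots+d_n=n}\frac{n!}{d_1!\cdots d_n!}\,s_1^{d_1}\cdots s_n^{d_n}\,\nu\big([Y],0;(\log|g_1|)^{[d_1]},\dots,(\log|g_n|)^{[d_n]}\big),
\]
where $(\log|g_j|)^{[d_j]}$ means the weight $\log|g_j|$ repeated $d_j$ times. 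Since these two homogeneous degree-$n$ polynomials agree for all positive integer values of the $s_j$ by Proposition \ref{prop:single ideal}, they are identical; comparing the coefficient of the multilinear monomial $s_1s_2\cdots s_n$ (the case $d_1=\cdots=d_n=1$) gives precisely $e(\mathfrak{U}_1;\cdots;\mathfrak{U}_n)=\nu([Y],0;\log|g_1|,\dots,\log|g_n|)$, and in fact the equality of all mixed coefficients drops out at once.

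The main obstacle I anticipate is justifying the multilinear expansion on the analytic side: I must ensure that every intermediate mixed Monge-Amp\`ere current $[Y]\wedge dd^c\log|g_{i_1}|\wedge\cdots\wedge dd^c\log|g_{i_k}|$ is a well-defined closed positive current and that the wedge product is genuinely multilinear in the weights. This is exactly where Demailly's theory of generalized Lelong numbers with several weights (the same circle of ideas underlying Lemma \ref{lem:comparison thm}) is needed: the hypotheses $\{g_{j,1}=\cdots=g_{j,N_j}=0\}\cap Y=\{0\}$ force each weight to have isolated unbounded locus on $Y$, so all the mixed products exist and multilinearity of the product of currents applies. By contrast, the weight factorization $\log|h|=\sum_j s_j\log|g_j|$ is an exact (not merely asymptotic) identity, which is what makes the comparison of coefficients clean rather than approximate.
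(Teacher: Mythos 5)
Your proposal is correct and follows essentially the same route as the paper's own proof: both apply Proposition \ref{prop:single ideal} to the product ideal $\mathfrak{U}_1^{p_1}\cdots\mathfrak{U}_n^{p_n}$, exploit the exact factorization of the generator weight into $\sum_j p_j\log|g_j|$, and compare the polarization expansion of the mixed multiplicities with the multilinear expansion of the Monge--Amp\`ere wedge power, reading off the coefficient of $p_1\cdots p_n$. The only cosmetic difference is that the paper states the polarization identity as a known fact rather than rederiving it from the Hilbert polynomial, as you do.
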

The proof of the theorem makes use of the following easily checked polarization property of mixed multiplicity: for $p_1,\cdots,p_n\in\mathbb{N}^*$,
\begin{align*}
e(\mathfrak{U}_1^{p_1}\cdot...\cdot\mathfrak{U}_n^{p_n})=\sum_{d_1+\cdots+d_n=n}\frac{n!}{d_1!\cdot...\cdot d_n!}e(\mathfrak{U}_1^{[d_1]};\cdots;\mathfrak{U}_n^{[d_n]})p_1^{d_1}\cdot...\cdot p_n^{d_n}.
\end{align*}

\begin{proof}
Since $\big\{\prod_{j=1}^n (g_{j,\alpha^j_1}\cdot...\cdot g_{j,\alpha^j_{p_j}}):1\leq \alpha^j_{1},\cdots,\alpha^j_{p_j}\leq N_j\big\}$ generates the ideal $\mathfrak{U}_1^{p_1}\cdot...\cdot\mathfrak{U}_n^{p_n}$, Proposition \ref{prop:single ideal} then yields that
\begin{align*}
e(\mathfrak{U}_1^{p_1}\cdot...\cdot\mathfrak{U}_n^{p_n})=\lim_{r\rightarrow0^+}\int_{\{|z|<r\}}[Y]\wedge\Big(\frac{1}{2}dd^c\log(\sum_{\alpha} \big|\prod_{j=1}^n g_{j,\alpha^j_1}\cdot...\cdot g_{j,\alpha^j_{p_j}} \big|^2)\Big)^n.
\end{align*}
Easy computations lead to the following
\begin{align*}
&\int_{\{|z|<r\}}[Y]\wedge\Big(\frac{1}{2}dd^c\log(\sum_{\alpha}\big|\prod_{j=1}^n g_{j,\alpha^j_1}\cdot...\cdot g_{j,\alpha^j_{p_j}}\big|^2)\Big)^n\\
=&\int_{\{|z|<r\}}[Y]\wedge\Big(\sum_{j=1}^n \frac{1}{2}dd^c\log\big(|g_{j,1}|^2+\cdots+|g_{j,N_j}|^2\big)^{p_j}\Big)^n\\
=&\sum_{d_1+\cdots+d_n=n}\frac{n!}{d_1!\cdot...\cdot d_n!}\Big(\int_{\{|z|<r\}}[Y]\wedge(dd^c\log|g_1|)^{d_1}\wedge\cdots\wedge(dd^c\log|g_n|)^{d_n}\Big)p_1^{d_1}\cdot...\cdot p_n^{d_n}.
\end{align*}
Since $p_1,\cdots,p_n$ are arbitrary, we can infer that
\begin{align*}
e(\mathfrak{U}_1^{[d_1]};\cdots;\mathfrak{U}_n^{[d_n]})=\lim_{r\rightarrow0^+}\int_{\{|z|<r\}}[Y]\wedge(dd^c\log|g_1|)^{d_1}\wedge\cdots\wedge(dd^c\log|g_n|)^{d_n}.
\end{align*}
In particular, letting $d_1=\cdots=d_n=1$, we complete the proof.
\end{proof}

\begin{rem}
The facts that $e(\mathfrak{U}_1;\cdots;\mathfrak{U}_n)>0$ and the inequality $e(\mathfrak{U}_1;\cdots;\mathfrak{U}_n)\geq e(\mathfrak{V}_1;\cdots;\mathfrak{V}_n)$ provided $\mathfrak{U}_j\subseteq\mathfrak{V}_j$, $j=1,\cdots,n$, 
are well-known. However they are not obvious in view of the definition. 
On the other hand, they follow from Theorem \ref{thm:mult vs lelong} and Lemma \ref{lem:comparison thm} directly.
\end{rem}

\section{Generalized Lelong numbers, intersection numbers and Monge-Amp\`ere products}\label{sec:lelong vs MA}

We prove that the generalized Lelong numbers can be transfromed into intersection numbers and Monge-Amp\`ere products by proper modifications and complete the proof of Theorem \ref{thm:main thm}.

\begin{thm}(=Theorem \ref{thm:main thm})\label{thm:lelong number vs intersection number}
Assume that the germ $(Y,y)$ is locally irreducible and reduced, then it holds that
\begin{align*}
e\big(\mathfrak{U}_1;\cdots;\mathfrak{U}_n\big)=(-1)^{n-1}(D_1)\cdot...\cdot(D_n)+\int_ET_1 \wedge \cdots\wedge T_n,
\end{align*}
provided that $\{A_j\}$ satisfies the dimension conditions
$$\mathrm{codim}(A_{j_1}\cap\cdots\cap A_{j_m})\geq m$$ 
for all choices of indices $j_1<\cdots<j_m$ in $\{1,\cdots,n\}$.
\end{thm}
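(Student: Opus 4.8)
The plan is to pass, via Theorem \ref{thm:mult vs lelong}, from the mixed multiplicity to a generalized Lelong number and then to lift the entire computation to the resolution $X$. Set $u_j:=\pi^*\log|g_j|$, a quasi-psh function defined near $E$ with $u_j\to-\infty$ along $E=\pi^{-1}(y)$. By Theorem \ref{thm:mult vs lelong} we have $e(\mathfrak U_1;\cdots;\mathfrak U_n)=\nu([Y],0;\log|g_1|,\cdots,\log|g_n|)$, so it suffices to evaluate this Lelong number. Since $\pi$ is a modification with $X$ regular, one has $\pi_*[X]=[Y]$ and $\pi$ is biholomorphic over $Y\setminus\{y\}$; applying the projection formula for the Monge--Amp\`ere operators of the psh weights $\log|g_j|$ (see \cite[Chap.\ III]{Dem}) over $\{|z|<r\}$ and its preimage, and then letting $r\to0$ so that $\pi^{-1}(\{|z|<r\})$ contracts to $E$, I would obtain
\begin{align*}
\nu([Y],0;\log|g_1|,\cdots,\log|g_n|)=\int_E dd^cu_1\wedge\cdots\wedge dd^cu_n,
\end{align*}
the mass on $E$ of the Monge--Amp\`ere measure $\bigwedge_j dd^cu_j$. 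The dimension hypotheses on the $A_j$ are exactly Demailly's condition guaranteeing that this product, and each partial product below, is a well-defined measure.

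Next I would insert the Siu decompositions $dd^cu_j=D_j+T_j$, with $D_j=\sum_i m_{i,j}[E_i]$ and $T_j$ the residual closed positive current singular along $A_j\subsetneq E$. Because each $u_j$ is a \emph{global} function on a neighborhood of $E$, the current $dd^cu_j$ is $dd^c$-exact, whence the cohomological relation $[T_j]=-[D_j]$. Expanding,
\begin{align*}
\int_E\bigwedge_{j=1}^n(D_j+T_j)=\int_E\bigwedge_{j=1}^nT_j+\sum_{\emptyset\neq S\subseteq\{1,\cdots,n\}}\int_E\Big(\bigwedge_{j\in S}D_j\Big)\wedge\Big(\bigwedge_{j\notin S}T_j\Big),
\end{align*}
where the first term is the desired Monge--Amp\`ere contribution $\int_ET_1\wedge\cdots\wedge T_n$, and it remains to identify the sum over nonempty $S$ with $(-1)^{n-1}(D_1)\cdots(D_n)$.

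The mechanism is a degree phenomenon coupled with $[T_j]=-[D_j]$. Every term in the sum over nonempty $S$ carries at least one divisorial factor and should therefore be evaluated cohomologically: using Poincar\'e--Lelong $[E_i]=dd^c\log|s_i|_{h_i}+\alpha_i$ for smooth metrics $h_i$ on $\mathcal O_X(E_i)$ with Chern forms $\alpha_i$, subtracting the divisorial poles (replacing $u_j$ by $w_j:=u_j-\sum_i m_{i,j}\log|s_i|_{h_i}$, which restricts to $E$ off $A_j$), and discarding every $dd^c(\mathrm{function})$ piece by Stokes on the compact $E$, I would show that each term with $|S|=k\geq1$ equals $(-1)^{n-k}(D_1)\cdots(D_n)$. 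Summing,
\begin{align*}
\sum_{\emptyset\neq S\subseteq\{1,\cdots,n\}}\int_E\Big(\bigwedge_{j\in S}D_j\Big)\wedge\Big(\bigwedge_{j\notin S}T_j\Big)=(D_1)\cdots(D_n)\sum_{k=1}^n\binom nk(-1)^{n-k}=(-1)^{n-1}(D_1)\cdots(D_n),
\end{align*}
since $\sum_{k=0}^n\binom nk(-1)^{n-k}=0$ while the omitted $k=0$ term equals $(-1)^n$. Crucially the $S=\emptyset$ term is \emph{not} amenable to this evaluation: a product of $n$ smooth $(1,1)$-classes vanishes on the $(n-1)$-dimensional $E$ for degree reasons, so the residual contribution survives only through the singular masses of the $T_j$ and equals the Monge--Amp\`ere mass $\int_ET_1\wedge\cdots\wedge T_n$. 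Combining the two displays yields the theorem.

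The hard part will be making the term-by-term cohomological evaluation rigorous, for two linked reasons: the individual products $\bigwedge_{j\in S}D_j$ are \emph{not} well-defined currents once the components $E_i$ are repeated across the $D_j$ (which they always are, all being supported on $E$), and the weights $u_j$ are identically $-\infty$ on $E$, so none restricts directly. I would handle this by never splitting the ill-defined factors: regularize (for instance $u_j^{(c)}:=\max(u_j,-c)$, or regularize the $w_j$), compute each contribution as a limit of integrals of smooth approximants, and invoke Demailly's monotone-continuity theorems for Monge--Amp\`ere products---whose applicability is precisely what the dimension conditions $\mathrm{codim}(A_{j_1}\cap\cdots\cap A_{j_m})\geq m$ secure---to rule out spurious mass concentrating where the singularities of distinct $T_j$ overlap. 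The remaining bookkeeping is to verify that, in the limit, each surviving $dd^cw_j$ is annihilated by Stokes on $E$ while the smooth Chern-form factors assemble into the stated intersection numbers, the sign $(-1)^{n-1}$ being the residue of the Poincar\'e--Lelong sign $[T_j]=-[D_j]$ after the binomial cancellation.
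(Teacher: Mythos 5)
Your overall skeleton (reduce to the generalized Lelong number via Theorem \ref{thm:mult vs lelong}, split off the divisorial parts by a Siu-type decomposition, evaluate divisor terms cohomologically with the sign $(-1)^{n-1}$, keep the residual Monge--Amp\`ere mass) reproduces the shape of the formula, but there is a genuine gap at the foundation of the argument, and it is exactly the gap the paper's proof is built to circumvent. Your first identity, $\nu([Y],0;\log|g_1|,\cdots,\log|g_n|)=\int_E dd^cu_1\wedge\cdots\wedge dd^cu_n$, is not meaningful: the unbounded locus of each $u_j=\pi^*\log|g_j|$ is the whole fiber $E=\pi^{-1}(y)$, a hypersurface, so Demailly's criterion for the product of the $n$ currents $dd^cu_j$ would require the $m$-fold intersections of these unbounded loci to have codimension $\geq m$; here every such intersection equals $E$, of codimension $1$, and the condition fails already for $m=2$. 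The dimension hypotheses of the theorem concern the sets $A_j$---the singular loci of the \emph{residual} currents $T_j$, which have codimension $\geq 2$---not the unbounded loci of the $u_j$, so your claim that these hypotheses are ``exactly Demailly's condition guaranteeing that this product is a well-defined measure'' is incorrect. The same objection defeats your proposed repair: truncation $u_j^{(c)}=\max(u_j,-c)$ plus ``monotone continuity of Monge--Amp\`ere products'' requires codimension conditions on the unbounded loci, which are violated along $E$. Moreover the $2^n$-term expansion contains factors such as $[E_i]\wedge[E_i]$ (the $D_j$ all share components), which are not currents at all, so ``each term with $|S|=k$ equals $(-1)^{n-k}(D_1)\cdots(D_n)$'' is not yet a statement about defined objects; and since closed positive currents in the same class can put different masses on analytic sets---that failure of cohomological invariance is the entire content of the term $\int_ET_1\wedge\cdots\wedge T_n$---the principle ``any term with a divisorial factor can be evaluated cohomologically'' needs a proof, not just Stokes.

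The paper runs into precisely this obstruction in mid-proof: it observes that iterating the naive limit would need $\mathrm{codim}(E\cap A_{n-m+1}\cap\cdots\cap A_n)\geq m$, one more than the hypothesis gives, ``so we have to modify our proof.'' The resolving device, which your sketch lacks, is the following: regularize the weights as $\log(|g_j|+\tfrac{1}{k_j})$, cut off by $\chi_r=\pi^*\tilde{\chi}_r$---crucially, $dd^c\chi_r$ vanishes on a neighborhood of $E$---and repeatedly integrate by parts so that every singular factor $dd^c\varphi_j$ is traded for the \emph{smooth} form $\varphi_j\,dd^c\chi_r$. All intermediate integrands are then smooth; the divisor currents $[E_i]$ pair to zero against forms that are exact or vanish near $E$, except for the single term $\int_X\chi_r\sum_i m_{1,i}[E_i]\wedge\theta_2\wedge\cdots\wedge\theta_n=(-1)^{n-1}(D_1)\cdot...\cdot(D_n)$; and the dimension conditions on the $A_j$ enter only at the very end, to undo the integrations by parts and reassemble the smooth remainder into $\int_X\chi_rT_1\wedge\cdots\wedge T_n$, which tends to $\int_ET_1\wedge\cdots\wedge T_n$ as $r\to0^+$. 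Without this (or an equivalent) mechanism for never forming products singular along the codimension-one set $E$, your outline does not close.
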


The main ingredients of our proof are the approximation techniques in \cite[Theorem A]{Pan24} (see also \cite[Proposition 4.2]{PT23}) and the generalized Monge-Amp\`ere products introduced by Demailly.  
\begin{thm}\cite[Theorem 2.5]{Dem93}
Let $u_1,\cdots,u_q$ $(q\leq n)$ be plurisubharmonic functions on an open subset $\Omega\subseteq\mathbb{C}^n$ with analytic singularities $A_1,\cdots,A_q$. 
Then $u_1dd^cu_2\wedge\cdots\wedge dd^cu_q$ and $dd^cu_1\wedge dd^cu_2\wedge\cdots\wedge dd^cu_q$ are well-defined as long as
\begin{align*}
\mathrm{codim}(A_{j_1}\cap \cdots\cap A_{j_m})\geq m
\end{align*} 
for all choices of indices $j_1<\cdots<j_m$ in $\{1,\cdots,q\}$.
\end{thm}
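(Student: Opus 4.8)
The plan is to prove the well-definedness of both products by induction on the number of factors, following the inductive construction of Monge--Amp\`ere products due to Bedford--Taylor and Demailly (see \cite{Dem}): having built a closed positive current $T=dd^c u_2\wedge\cdots\wedge dd^c u_q$ of bidimension $(n-q+1,n-q+1)$, one defines $u_1\,dd^c u_2\wedge\cdots\wedge dd^c u_q:=u_1 T$ as a current of order zero, and then sets $dd^c u_1\wedge\cdots\wedge dd^c u_q:=dd^c(u_1 T)$. Since the bounded part of a function with analytic singularities never affects integrability, I would first reduce to the local model $u_j=\tfrac12\log(|g_{j,1}|^2+\cdots+|g_{j,N_j}|^2)$ with unbounded locus $A_j=\{g_{j,\bullet}=0\}$. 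The inductive hypothesis I would carry is that each partial product $S_k=dd^c u_{i_1}\wedge\cdots\wedge dd^c u_{i_k}$ is a closed positive current that agrees with the Bedford--Taylor product and is locally bounded on $\Omega\setminus(A_{i_1}\cup\cdots\cup A_{i_k})$, and whose singular concentration is controlled stratum by stratum along the sets $A_I=\bigcap_{i\in I}A_i$.

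The heart of the argument, and the step where the codimension hypothesis enters, is the integrability claim that $u_p$ is locally integrable with respect to the trace measure of $S_{p-1}$. Outside the unbounded locus $A_{i_1}\cup\cdots\cup A_{i_{p-1}}$ the current $S_{p-1}$ is locally bounded and $u_p\in L^1_{\mathrm{loc}}$, so no difficulty arises there; the only obstruction lies along the strata $A_p\cap A_I$ with $I\subseteq\{i_1,\dots,i_{p-1}\}$, where both $u_p$ and $S_{p-1}$ are singular. Here I would invoke the codimension hypothesis in the form $\mathrm{codim}(A_p\cap A_I)\geq |I|+1$, so that $\dim(A_p\cap A_I)\leq n-|I|-1\leq (n-p+1)-1$, the last inequality being sharp exactly for the top stratum $I=\{i_1,\dots,i_{p-1}\}$. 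Since a closed positive current of bidimension $s$ concentrates on an $s$-dimensional structure and cannot be charged by the pole set of $u_p$ once that pole set meets it in dimension at most $s-1$, the restriction of $u_p$ to the support of $S_{p-1}$ has poles of codimension $\geq 1$ within the sheets and is therefore integrable. I would make this rigorous through the Chern--Levine--Nirenberg inequalities (see \cite{Dem}), using them to bound $\int_K|u_p|\,d\,\mathrm{tr}(S_{p-1})$ in terms of masses of $S_{p-1}$ over shrinking neighborhoods of the strata, the convergence of these bounds being guaranteed precisely by the dimension count above.

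Granting integrability, $u_p S_{p-1}$ is a well-defined current of order zero and I would set $S_p:=dd^c(u_p S_{p-1})$. To obtain positivity and closedness, and to re-establish the inductive hypothesis for $S_p$, I would approximate $u_p$ from above by the bounded plurisubharmonic truncations $u_p^{(k)}=\max(u_p,-k)\searrow u_p$: each $dd^c u_p^{(k)}\wedge S_{p-1}$ is closed and positive by Bedford--Taylor, the integrability estimate lets me pass to the limit $u_p^{(k)}S_{p-1}\to u_p S_{p-1}$ weakly, and hence $S_p$ is a weak limit of closed positive currents, so is itself closed and positive. The same monotone-continuity argument, applied to two competing decreasing approximations through an interlacing $\min$, shows the limit is independent of the approximating sequence; a standard commutation argument then reduces the symmetry of the product in $u_1,\dots,u_q$ to the locally bounded case, where it is Bedford--Taylor's theorem. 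Applying this machinery with $q-1$ factors produces $T=dd^c u_2\wedge\cdots\wedge dd^c u_q$, and one further integrability step for $u_1$ against $T$ yields both $u_1\,dd^c u_2\wedge\cdots\wedge dd^c u_q=u_1 T$ and $dd^c u_1\wedge\cdots\wedge dd^c u_q=dd^c(u_1 T)$.

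I expect the main obstacle to be the integrability estimate of the second paragraph: while the pairwise intersections only give $\mathrm{codim}(A_p\cap A_j)\geq 2$, a naive dimension count over $A_p\cap(A_{i_1}\cup\cdots\cup A_{i_{p-1}})$ fails for $p\geq 3$, so it is essential to exploit the full family of codimension conditions together with the inductively maintained stratumwise control of the singular concentration of $S_{p-1}$. The borderline stratum $A_p\cap A_{i_1}\cap\cdots\cap A_{i_{p-1}}$, of codimension exactly $p$ and dimension exactly $(n-p+1)-1$, is the delicate case, where one must show that the Chern--Levine--Nirenberg bounds actually converge rather than merely remain finite away from the stratum.
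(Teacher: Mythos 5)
The paper contains no proof of this statement: it is imported verbatim from Demailly \cite[Theorem 2.5]{Dem93} and used as a black box in the proof of Theorem \ref{thm:main thm}, so there is no in-paper argument to compare against. That said, your sketch is in substance a reconstruction of Demailly's own proof: the inductive Bedford--Taylor scheme $S_p:=dd^c(u_p S_{p-1})$, the bounded truncations $\max(u_p,-k)$ with monotone continuity, the weak-limit argument for closedness and positivity, and the Chern--Levine--Nirenberg mass estimates are exactly the ingredients of \cite[\S 2]{Dem93} (see also \cite[Chap.~III, \S 4]{Dem}), and you correctly locate the single place where the codimension hypothesis enters, namely the local integrability of $u_p$ against the trace measure of $S_{p-1}$.

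Two caveats. First, the step you yourself flag as the main obstacle is the entire mathematical content of the theorem, and your justification of it is not yet an argument: the principle that a closed positive current of bidimension $s$ ``cannot be charged by the pole set of $u_p$ once that pole set meets it in dimension at most $s-1$'' is not a dimension count (an arbitrary positive measure can perfectly well charge such a set), and ``poles of codimension $\geq 1$ within the sheets'' has no rigorous meaning, since $\mathrm{supp}\,S_{p-1}$ need not be a union of analytic sheets. What makes the principle true is the quantitative CLN-type estimate for closed positive currents paired with psh functions locally bounded outside an analytic set of small dimension, proved by localizing near the strata and exploiting closedness and positivity --- i.e., the preparatory propositions of \cite[\S 2]{Dem93}. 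Deferring precisely this to ``I would make this rigorous through the Chern--Levine--Nirenberg inequalities'' leaves the proposal a plan rather than a proof, though the plan is the correct one, including your observation that the borderline stratum of codimension exactly $p$ is where the estimate is sharp. Second, the hypothesis actually needed (and all that \cite{Dem93} assumes) is that $u_j$ be locally bounded outside the analytic set $A_j$; the reduction to the models $\frac12\log\sum_k|g_{j,k}|^2$ is unnecessary, and one cannot literally substitute the model for $u_j$ in the product itself, because the $O(1)$ discrepancy is not psh and $dd^c u_j$ is not the model current plus a controlled term. As you use it --- only to check integrability --- the reduction is harmless, but that restriction of its role should be made explicit.
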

\begin{proof}[Proof of Theorem \ref{thm:lelong number vs intersection number}]
Assuming that $(Y,y)$ is embedded in some $(\mc_z^M,0)$, and $g_{j,k}\in\mathcal{O}(\{|z|<r_0\})$ for some $r_0>0$. 
We decompose the current $dd^c\pi^*\log|g_j|$ in $X':=\pi^{-1}(\{|z|<r_0\})$ as follows:
\begin{align*}
dd^c\pi^*\log|g_j|&=\sum_{i=1}^N m_{j,i}[E_i]+T_j,\\
T_j&=\theta_j+dd^c\varphi_j,
\end{align*}
where
\begin{itemize}
\item
$\theta_j$ is a smooth closed $(1,1)$-form, with $\{\theta_j\}=\{-\sum_{i=1}^Nm_{j,i}[E_i]\}\in H^{(1,1)}(X',\mathbb{Z})$,
\item
$\varphi_j\in\mathrm{C}^{\infty}(X'\setminus E)\cap \mbox{PSH}(X',\theta_j)$ has analytic singularities $A_j$, $\mathrm{codim}(A_j)\geq2$.
\end{itemize}
Indeed, let $L_j:=\mathcal{O}_X(D_j)$ be the line bundle on $X$ associated to the divisor $D_j=\sum_{i=1}^N m_{j,i}E_i$.
Let $s_j$ be the canonical section of $L_j$ such that $\mathrm{div}(s_j)=D_j$. 
We take an arbitrary smooth metric $h_j=e^{-\psi_j}$ on $L_j$. Then we have
\begin{align*}
dd^c\log|s_j|_{h_j}=\sum_{i=1}^Nm_{j,i}[E_i]-dd^c\psi_j.
\end{align*}
Therefore, $\theta_j=-dd^c\psi_j$ and $\varphi_j=\pi^*\log|g_j|-\log|s_j|_{h_j}$ are globally defined and satisfy our requirements.

For $0<r<<1$, we take the cut-off function $\tilde{\chi}_r$ such that $\tilde{\chi}_r\equiv1$ in $\{|z|<\frac{r}{2}\}$ and $\mathrm{supp}(\tilde{\chi}_r)\subseteq\{|z|<r\}$, and set $\chi_r:=\pi^*\tilde{\chi}_r$.
Then the mixed Lelong number is expressed as follows:
\begin{align*}
&\nu([Y],0;\log|g_1|,\cdots,\log|g_n|)\\
=&\lim_{r\rightarrow0^+}\lim_{k_1\rightarrow\infty}\cdots\lim_{k_n\rightarrow\infty}\int_{Y}\tilde{\chi}_r(dd^c\log(|g_1|+\frac{1}{k_1}))\wedge\cdots\wedge(dd^c\log(|g_n|+\frac{1}{k_n})).
\end{align*}
First, we let $k_n\ra\infty$ and get
\begin{align*}
&\lim_{k_n\rightarrow\infty}\int_{Y}\tilde{\chi}_r\bigwedge_{j=1}^n dd^c\log(|g_j|+\frac{1}{k_j}) \\
=&\lim_{k_n\rightarrow\infty}\int_{X}\chi_r\bigwedge_{j=1}^n dd^c\pi^*\log(|g_j|+\frac{1}{k_j})\\
=&\int_{X}\chi_r\bigwedge_{j=1}^{n-1}dd^c\pi^*\log(|g_j|+\frac{1}{k_j})\wedge(\sum_{i=1}^Nm_{n,i}[E_i]+T_n).
\end{align*}
Since $\sum_{i=1}^Nm_{n,i}[E_i]$ is closed with compact support $E$ and in some neighbourhood of $E$, 
\begin{align*}
\chi_r\bigwedge_{j=1}^{n-1}dd^c\pi^*\log(|g_j|+\frac{1}{k_j})=dd^c\Big(\chi_r\pi^*\log(|g_1|+\frac{1}{k_1})\wedge\bigwedge_{j=2}^{n-1}dd^c\pi^*\log(|g_j|+\frac{1}{k_j})\Big)
\end{align*}
is a smooth $dd^c$-exact form, we can infer that
\begin{align*}
\int_{X}\chi_r\bigwedge_{j=1}^{n-1}dd^c\pi^*\log(|g_j|+\frac{1}{k_j})\wedge\big(\sum_{i=1}^Nm_{n,i}[E_i]\big)=0.
\end{align*}
Therefore,
\begin{align*}
\lim_{k_n\rightarrow\infty}&\int_X\chi_r\bigwedge_{j=1}^{n}dd^c\pi^*\log(|g_j|+\frac{1}{k_j})=\int_X \chi_r\bigwedge_{j=1}^{n-1}dd^c\pi^*\log(|g_j|+\frac{1}{k_j})\wedge T_n.
\end{align*}
We hope to repeat the above arguments and get that
\begin{align*}
&\lim_{r\ra 0^+}\cdots\lim_{k_{n-1}\rightarrow\infty} \int_{X}\chi_r\bigwedge_{j=1}^n dd^c\pi^*\log(|g_j|+\frac{1}{k_j})\wedge T_n\\
=&\lim_{r\ra0^+}\cdots \lim_{k_{n-2}\ra \infty} \int_{X}\chi_r\bigwedge_{j=1}^{n-1} dd^c\pi^*\log(|g_j|+\frac{1}{k_j}) \wedge T_{n-1}\wedge T_n\\
=&\cdots\\
=&\lim_{r\ra0^+}\cdots \lim_{k_{n-m}\ra\infty} \int_{X}\chi_r\bigwedge_{j=1}^{n-m} dd^c\pi^*\log(|g_j|+\frac{1}{k_j})\wedge T_{n-m+1}\wedge\cdots\wedge T_n\\
=&\cdots
\end{align*}
However, when letting $k_{n-m}\ra \infty$, 
we need $\mathrm{codim}(E\cap A_{n-m+1}\cap\cdots A_n)\geq m$ to make the current $dd^c\pi^*\log|g_{n-m-1}|\wedge T_{n-m}\wedge\cdots\wedge T_n$ well-defined. 
This is stronger than our hypothesis, 
which says that $\mathrm{codim}(E\cap A_{n-m+1}\cap\cdots A_n)=\mathrm{codim}( A_{n-m+1}\cap\cdots A_n)\geq m-1$, 
so we have to modify our proof.

We split $T_n$ into $\theta_n+dd^c\varphi_n$ and use integration by parts to get that
\begin{align*}
&\int_X \chi_r\bigwedge_{j=1}^{n-1}dd^c\pi^*\log(|g_j|+\frac{1}{k_j})\wedge T_n\\
=&\int_X\chi_r \bigwedge_{j=1}^{n-1}dd^c\pi^*\log(|g_j|+\frac{1}{k_j})\wedge \theta_n
+\int_X\bigwedge_{j=1}^{n-1}dd^c\pi^*\log(|g_j|+\frac{1}{k_j})\wedge\varphi_ndd^c\chi_r.
\end{align*}
Note that $\varphi_ndd^c\chi_r$ is no longer singular but a smooth $(1,1)$-form. 
When letting $k_{n-1}\rightarrow\infty$, it holds that
\begin{align*}
\lim_{k_{n-1}\rightarrow\infty}\lim_{k_n\ra\infty}&\int_X\chi_r \bigwedge_{j=1}^{n}dd^c\pi^*\log(|g_j|+\frac{1}{k_j})\\
=&\int_X\chi_r \bigwedge_{j=1}^{n-2}dd^c\pi^*\log(|g_j|+\frac{1}{k_j})\wedge(\sum_{i=1}^Nm_{n-1,i}[E_i]+\theta_{n-1}+dd^c\varphi_{n-1})\wedge \theta_n\\
+&\int_X \bigwedge_{j=1}^{n-2}dd^c\pi^*\log(|g_j|+\frac{1}{k_j})\wedge(\sum_{i=1}^Nm_{n-1,i}[E_i]+\theta_{n-1}+dd^c\varphi_{n-1})\wedge \varphi_ndd^c\chi_r.
\end{align*}
As before, $\chi_r \bigwedge_{j=1}^{n-2}dd^c\pi^*\log(|g_j|+\frac{1}{k_j})\wedge \theta_n$ is exact near $E$, 
so $\sum_{i=1}^Nm_{n-1,i}[E_i]$ does not contribute to the integration in the first term.
Moreover, $\varphi_ndd^c\chi_r$ vanishes near $E$, thus $\sum_{i=1}^Nm_{n-1,i}[E_i]$ does not contribute to the integration in the second term neither. 
We also use the following integration by parts
\begin{align*}
&\int_X\chi_r \bigwedge_{j=1}^{n-2}dd^c\pi^*\log(|g_j|+\frac{1}{k_j})\wedge dd^c\varphi_{n-1} \wedge \theta_n=\int_X \bigwedge_{j=1}^{n-2}dd^c\pi^*\log(|g_j|+\frac{1}{k_j})\wedge \varphi_{n-1}dd^c\chi_r \wedge \theta_n,\\
&\int_X \bigwedge_{j=1}^{n-2}dd^c\pi^*\log(|g_j|+\frac{1}{k_j})\wedge dd^c\varphi_{n-1} \wedge \varphi_ndd^c\chi_r=\int_X \bigwedge_{j=1}^{n-2}dd^c\pi^*\log(|g_j|+\frac{1}{k_j})\wedge \varphi_{n-1}dd^c\chi_r \wedge dd^c\varphi_n.
\end{align*}
Therefore, we get that
\begin{align*}
\lim_{k_{n-1}\rightarrow\infty}\lim_{k_n\ra\infty} 
&\int_X\chi_r \bigwedge_{j=1}^{n}dd^c\pi^*\log(|g_j|+\frac{1}{k_j}) \\
=&\int_X\chi_r \bigwedge_{j=1}^{n-2}dd^c\pi^*\log(|g_j|+\frac{1}{k_j})\wedge \theta_{n-1}\wedge \theta_n\\
+&\int_X  \bigwedge_{j=1}^{n-2}dd^c\pi^*\log(|g_j|+\frac{1}{k_j})\wedge\varphi_{n-1}dd^c\chi_r\wedge\theta_n
\end{align*}
\begin{align*}
+&\int_X  \bigwedge_{j=1}^{n-2}dd^c\pi^*\log(|g_j|+\frac{1}{k_j})\wedge \theta_{n-1} \wedge \varphi_ndd^c\chi_r\\
+&\int_X  \bigwedge_{j=1}^{n-2}dd^c\pi^*\log(|g_j|+\frac{1}{k_j})\wedge \varphi_{n-1}dd^c\chi_r\wedge dd^c\varphi_n.
\end{align*}
Clearly, all terms in the integration are still smooth. 
They can be viewed as the ``smooth" expansion of 
$\int_X\chi_r \bigwedge_{j=1}^{n-2}(\theta_{n-1}+dd^c\varphi_{n-1})\wedge (\theta_n+dd^c\varphi_n)$. 
Letting $k_{n-2}\ra\infty$, similar arguments yields that
\begin{align*}
\lim_{k_{n-2}\ra\infty}\lim_{k_{n-1}\rightarrow\infty}\lim_{k_n\ra\infty} 
&\int_X\chi_r \bigwedge_{j=1}^{n}dd^c\pi^*\log(|g_j|+\frac{1}{k_j}) \\
=&\int_X\chi_r \bigwedge_{j=1}^{n-3}dd^c\pi^*\log(|g_j|+\frac{1}{k_j})\wedge \theta_{n-2}\wedge \theta_{n-1}\wedge \theta_n\\
+&\int_X \bigwedge_{j=1}^{n-3}dd^c\pi^*\log(|g_j|+\frac{1}{k_j})\wedge \varphi_{n-2}dd^c\chi_r\wedge \theta_{n-1}\wedge \theta_n\\
+&\int_X \bigwedge_{j=1}^{n-3}dd^c\pi^*\log(|g_j|+\frac{1}{k_j})\wedge \theta_{n-2}\wedge \varphi_{n-1} dd^c\chi_r \wedge \theta_n\\
+&\int_X \bigwedge_{j=1}^{n-3}dd^c\pi^*\log(|g_j|+\frac{1}{k_j})\wedge \varphi_{n-2}dd^c\chi_r\wedge dd^c\varphi_{n-1} \wedge \theta_n\\
+&\int_X \bigwedge_{j=1}^{n-3}dd^c\pi^*\log(|g_j|+\frac{1}{k_j})\wedge \theta_{n-2}\wedge \theta_{n-1} \wedge \varphi_ndd^c\chi_r\\
+&\int_X \bigwedge_{j=1}^{n-3}dd^c\pi^*\log(|g_j|+\frac{1}{k_j})\wedge \varphi_{n-2}dd^c\chi_r\wedge \theta_{n-1} \wedge dd^c\varphi_n\\
+&\int_X \bigwedge_{j=1}^{n-3}dd^c\pi^*\log(|g_j|+\frac{1}{k_j})\wedge \theta_{n-2}\wedge \varphi_{n-1}dd^c\chi_r \wedge dd^c\varphi_n\\
+&\int_X \bigwedge_{j=1}^{n-3}dd^c\pi^*\log(|g_j|+\frac{1}{k_j})\wedge \varphi_{n-2}dd^c\chi_r\wedge dd^c\varphi_{n-1} \wedge dd^c\varphi_n.
\end{align*}
The terms in the integration are still smooth, they can be viewed as the ``smooth" expansion of
\begin{align*}
\int_X\chi_r\bigwedge_{j=1}^{n-3}dd^c\pi^*\log(|g_j|+\frac{1}{j})\wedge (\theta_{n-2}+dd^c\varphi_{n-2})\wedge(\theta_{n-1}+dd^c\varphi_{n-1})\wedge(\theta_n+dd^c\varphi_n).
\end{align*} 
Repeat the arguments and we eventually get that
\begin{align*}
\lim_{k_1\rightarrow\infty}\cdots\lim_{k_n\rightarrow\infty}&\int_X\chi_r\bigwedge_{j=1}^n dd^c\pi^*\log(|g_j|+\frac{1}{k_j})\\
=&\int_X\chi_r(\sum_{i=1}^Nm_{1,i}[E_i]+\theta_1+dd^c\varphi_1)\wedge\theta_2\wedge\theta_3\wedge\cdots\wedge\theta_n\\
+&\int_X(\sum_{i=1}^Nm_{1,i}[E_i]+\theta_1+dd^c\varphi_1)\wedge \varphi_2dd^c\chi_r\wedge \theta_3\wedge\cdots\wedge \theta_n\\
+&\cdots\cdots\\
+&\cdots\cdots\\
+&\int_X(\sum_{i=1}^Nm_{1,i}[E_i]+\theta_1+dd^c\varphi_1)\wedge\theta_2\wedge\varphi_3dd^c\chi_r\wedge dd^c\varphi_4\wedge\cdots\wedge dd^c\varphi_n\\
+&\int_X(\sum_{i=1}^Nm_{1,i}[E_i]+\theta_1+dd^c\varphi_1)\wedge\varphi_2dd^c\chi_r\wedge dd^c\varphi_3\wedge dd^c\varphi_4\wedge\cdots\wedge dd^c\varphi_n.
\end{align*}
The first term contributes to the intersection numbers:
\begin{align*}
\int_X\chi_r\sum_{i=1}^Nm_{1,i}[E_i]\wedge\theta_2\wedge\cdots\wedge\theta_n=D_1\cdot(-D_2)\cdot...\cdot(-D_n)=(-1)^{n-1}(D_1)\cdot...\cdot (D_n).
\end{align*}
The dimension conditions ensure the Monge-Amp\`ere products $dd^c\varphi_{j_1}\wedge\cdots\wedge dd^c\varphi_{j_m}$ is well-defined for all choice of indices $j_1<\cdots<j_m$, therefore we have
\begin{align*}
&\int_X\chi_r\theta_1\wedge\theta_2\wedge\theta_3\wedge\cdots\wedge \theta_n
+\int_X\varphi_1dd^c\chi_r\wedge\theta_2\wedge\theta_3\wedge\cdots\wedge\theta_n\\
+&\cdots\cdots\\
+&\cdots\cdots\\
+&\int_X\theta_1\wedge\varphi_2dd^c\chi_r\wedge dd^c\varphi_3\wedge\cdots\wedge dd^c\varphi_n+\int_X\varphi_1dd^c\chi_r\wedge dd^c\varphi_2\wedge dd^c\varphi_3\wedge\cdots\wedge dd^c\varphi_n\\
=&\int_X\chi_r\theta_1\wedge\theta_2\wedge\theta_3\wedge\cdots\wedge \theta_n
+\int_X\chi_rdd^c\varphi_1\wedge\theta_2\wedge\theta_3\wedge\cdots\wedge\theta_n\\
+&\cdots\cdots\\
+&\cdots\cdots\\
+&\int_X\chi_r\theta_1\wedge dd^c\varphi_2\wedge dd^c\varphi_3\wedge\cdots\wedge dd^c\varphi_n
+\int_X\chi_rdd^c\varphi_1\wedge dd^c\varphi_2\wedge dd^c\varphi_3\wedge\cdots\wedge dd^c\varphi_n\\
=&\int_X\chi_r(\theta_1+dd^c\varphi_1)\wedge\cdots\wedge(\theta_n+dd^c\varphi_n).
\end{align*}
Letting $r\rightarrow0^+$, we obtain the desired formula
\begin{align*}
\nu([Y],0;\log|g_1|;\cdots;\log|g_n|)=(-1)^{n-1}(D_1)\cdot...\cdot (D_n)+\int_ET_1\wedge\cdots\wedge T_n.
\end{align*}

\end{proof}
\begin{rem}
In our proof, the equality 
\begin{align*}
 &\ \nu([Y],0;\log|g_1|;\cdots;\log|g_n|)-(-1)^{n-1}(D_1)\cdot...\cdot( D_n)\\
=&\lim_{r\rightarrow0^+}
\Big(\int_X\chi_r\theta_1\wedge\theta_2\wedge\theta_3\wedge\cdots\wedge \theta_n+\int_X\varphi_1dd^c\chi_r\wedge\theta_2\wedge\theta_3\wedge\cdots\wedge\theta_n\\
+&\cdots\cdots\\
+&\cdots\cdots\\
+&\int_X\theta_1\wedge\varphi_2dd^c\chi_r\wedge dd^c\varphi_3\wedge\cdots\wedge dd^c\varphi_n+\int_X\varphi_1dd^c\chi_r\wedge dd^c\varphi_2\wedge dd^c\varphi_3\wedge\cdots\wedge dd^c\varphi_n\Big).
\end{align*}
does not require the dimension conditions.
\end{rem}

If we drop the dimension conditions on $A_j$, we can obtain an inequality provided that $E$ is irreducible and $\mathcal{O}_{X}(-E)$ is semi-positive near $E$.
\begin{cor}(=Theorem \ref{thm:ineq})
Assume that the germ $(Y,y)$ is locally irreducible and reduced and $E$ is irreducible, if $\mathcal{O}_X(-E)$ is semi-positive on some neighborhood of $E$ in $X$, 
then it holds that
\begin{align*}
e\big(\mathfrak{U}_1;\cdots;\mathfrak{U}_n\big)\geq(-1)^{n-1}(D_1)\cdot...\cdot(D_n)\geq 0.
\end{align*}
\end{cor}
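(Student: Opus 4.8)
The strategy is to derive both inequalities from a single comparison, carried out on the resolution $X$, between the pulled-back weights $u_j:=\pi^{*}\log|g_j|$ and the purely divisorial weights attached to $E$. Throughout I read Theorem \ref{thm:mult vs lelong} on $X$: by the direct-image formula for generalized Lelong numbers (as already used in the proof of Proposition \ref{prop:single ideal}, and implicit in the proof of Theorem \ref{thm:lelong number vs intersection number}),
\[
e(\mathfrak{U}_1;\cdots;\mathfrak{U}_n)=\nu([Y],0;\log|g_1|,\cdots,\log|g_n|)=\nu([X],E;u_1,\cdots,u_n),
\]
the last being the mixed generalized Lelong number along the compact set $E=\pi^{-1}(0)$. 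Since $E$ is irreducible, $D_j=m_jE$ with $m_j=\mathrm{ord}_E(\pi^{*}\mathfrak{U}_j)\ge 1$; I fix the canonical section $s_E$ of $\mathcal{O}_X(E)$ and a smooth metric $h$ on $\mathcal{O}_X(E)$, and set $v_j:=m_j\log|s_E|_h$, so that $dd^c v_j=m_j[E]-m_j\beta$ with $\beta:=c_1(\mathcal{O}_X(E),h)$.

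First I would prove the comparison $\nu([X],E;v_1,\cdots,v_n)\le\nu([X],E;u_1,\cdots,u_n)$. From the Siu decomposition $dd^c u_j=m_j[E]+T_j$ with $T_j\ge 0$ and analytic singularities $A_j\subsetneq E$, the difference $\varphi_j:=u_j-v_j$ is quasi-plurisubharmonic, bounded above near $E$, and tends to $-\infty$ only along $A_j$. Hence $\limsup_{x\to E} v_j/u_j=1$ (the ratio tends to $1$ off $A_j$ and is $<1$ on $A_j$), and since $u_j^{-1}(-\infty)=v_j^{-1}(-\infty)=E$ is compact, Demailly's comparison theorem (of which Lemma \ref{lem:comparison thm} is a special case) yields the inequality with all comparison constants equal to $1$. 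It remains to identify the left-hand side: the $v_j$ are divisorial with smooth remainders $-m_j\beta$ in their Siu decompositions, so running the computation in the proof of Theorem \ref{thm:lelong number vs intersection number} for these weights, the divisor parts contribute $(-1)^{n-1}(D_1)\cdots(D_n)$ while the smooth mixed term $\int_E(-m_1\beta)\wedge\cdots\wedge(-m_n\beta)$ is the integral over the $(n-1)$-dimensional $E$ of a smooth $(n,n)$-form, hence vanishes. This is exactly the invertible case recorded in the Remark after Theorem \ref{thm:lelong number vs intersection number}, giving $\nu([X],E;v_1,\cdots,v_n)=(-1)^{n-1}(D_1)\cdots(D_n)$ and therefore the first inequality $e(\mathfrak{U}_1;\cdots;\mathfrak{U}_n)\ge(-1)^{n-1}(D_1)\cdots(D_n)$.

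For the second inequality I would use the semi-positivity of $\mathcal{O}_X(-E)$ near $E$ to choose $h$ so that $\beta=c_1(\mathcal{O}_X(E),h)\le 0$ on a neighborhood of $E$. Then $dd^c v_j=m_j[E]-m_j\beta\ge 0$, so each $v_j$ is genuinely plurisubharmonic there, $dd^c v_1\wedge\cdots\wedge dd^c v_n$ is a non-negative measure, and its mass along $E$, namely $\nu([X],E;v_1,\cdots,v_n)$, is $\ge 0$. Equivalently, $(-1)^{n-1}(D_1)\cdots(D_n)=m_1\cdots m_n\,(-1)^{n-1}(E^n)=m_1\cdots m_n\int_E c_1(N^{*}_{E/X})^{n-1}\ge 0$, since the conormal bundle $N^{*}_{E/X}=\mathcal{O}_X(-E)|_E$ is semi-positive on the compact $E$. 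This closes the chain.

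The step I expect to be the main obstacle is the rigorous application of the comparison theorem to $v_j=m_j\log|s_E|_h$: unlike the weights in Lemma \ref{lem:comparison thm}, $s_E$ is a section of a line bundle on $X$ and need not be the pullback of a tuple of holomorphic functions on $Y$ (its failure to be so is precisely what $A_j$ measures), so one must invoke Demailly's comparison and direct-image theorems for generalized Lelong numbers on $X$ directly and verify that $\nu([X],E;u_\bullet)$ and $\nu([X],E;v_\bullet)$ are legitimate objects to which they apply. A secondary point needing care is the divisorial identity when $E$ is singular, where $\int_E$ is to be read as pairing with the fundamental class of $E$ (equivalently, pulled back to a resolution of $E$), which affects neither the vanishing of the smooth term nor the sign from semi-positivity. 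As an alternative that stays within the currents already constructed, one could instead start from the identity in the Remark after Theorem \ref{thm:lelong number vs intersection number}, which expresses $e(\mathfrak{U}_1;\cdots;\mathfrak{U}_n)-(-1)^{n-1}(D_1)\cdots(D_n)$ as a limit of explicit integrals, choose $\theta_j=m_j\beta'\ge 0$ with $\beta'$ a semi-positive representative of $c_1(\mathcal{O}_X(-E))$, and show this correction is $\ge 0$ by regularizing the $\theta_j$-plurisubharmonic potentials $\varphi_j$ and passing to the limit; there the obstacle shifts to controlling the loss of positivity and the uniform boundedness of the intermediate mixed masses.
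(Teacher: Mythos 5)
You take a genuinely different route from the paper, and as written it has one real gap: your first step is presented as working for an \emph{arbitrary} smooth metric $h$ on $\mathcal{O}_X(E)$, i.e.\ without the semi-positivity hypothesis. With such an $h$ the weights $v_j=m_j\log|s_E|_h$ satisfy $dd^cv_j=m_j[E]-m_j\beta$ with $\beta$ of no definite sign, so the $v_j$ are merely quasi-plurisubharmonic near $E$. Demailly's machinery for semi-exhaustive weights --- both the definition of $\nu([X],E;v_1,\cdots,v_n)$ as a (positive) Monge-Amp\`ere mass and the comparison theorem of which Lemma \ref{lem:comparison thm} is a special case --- is a statement about \emph{plurisubharmonic} weights; it has no analogue for quasi-psh weights, and mixed Monge-Amp\`ere masses of quasi-psh functions carry no positivity or monotonicity in general. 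The claim is also suspicious on its face: if step 1 held for arbitrary $h$, then combined with your identification $\nu([X],E;v_1,\cdots,v_n)=(-1)^{n-1}(D_1)\cdots(D_n)$ it would give the lower bound $e(\mathfrak{U}_1;\cdots;\mathfrak{U}_n)\geq(-1)^{n-1}(D_1)\cdots(D_n)$ with no hypothesis at all on $\mathcal{O}_X(-E)$, whereas the paper explicitly records (third bullet of the Remark after Theorem \ref{thm:main thm}) that it does not even know whether $(-1)^{n-1}(D_1)\cdots(D_n)\geq0$ in general.

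The gap is repairable with an ingredient you already use in step 2: invoke the hypothesis at the outset and choose $h$ so that $\beta=c_1(\mathcal{O}_X(E),h)\leq0$ near $E$. Then each $v_j$ is genuinely psh with compact pole set $E$, your estimate $\limsup v_j/u_j\leq1$ (via quasi-psh-ness, hence upper-boundedness, of $u_j-v_j$ near $E$) is correct, Demailly's comparison and direct-image theorems apply, and the evaluation $\nu([X],E;v_1,\cdots,v_n)=(-1)^{n-1}(D_1)\cdots(D_n)$ can be obtained by re-running the computation in the proof of Theorem \ref{thm:lelong number vs intersection number} with $\theta_j=-m_j\beta$ and $\varphi_j\equiv0$; note you cannot literally cite that theorem or its Remark, since the $v_j$ are not pullbacks of generators of ideals on $Y$, so this identification is genuine extra work. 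By contrast, the paper's actual proof is essentially your closing ``alternative'': it starts from the identity in the Remark after Theorem \ref{thm:lelong number vs intersection number}, takes $\theta_j=m_jdd^c\psi\geq0$ from the semi-positive metric, and then --- instead of regularizing, which as you note loses positivity --- it \emph{truncates}, setting $\varphi_j^N:=\max\{\varphi_j,-N\}$ with $N$ so large that $\varphi_j^N=\varphi_j$ on the support of $dd^c\chi_r$. Truncation preserves $\theta_j$-psh-ness precisely because $\theta_j\geq0$, the truncated potentials are bounded, and Bedford--Taylor positivity reassembles the correction term into $\int_X\chi_r(\theta_1+dd^c\varphi_1^N)\wedge\cdots\wedge(\theta_n+dd^c\varphi_n^N)\geq0$. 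That yields the first inequality in a few lines, entirely within the currents already constructed; the second inequality is then immediate from $(-1)^{n-1}(D_1)\cdots(D_n)=m_1\cdots m_n\int_E(dd^c\psi)^{n-1}\geq0$, which is your cohomological observation.
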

\begin{proof}
In a neighbourhood of $E$, let $e^{-\psi}$ be a smooth semi-positive metric on $\mathcal{O}_X(-E)$, then $e^{-m_j\psi}$ is a smooth semi-positive metric on $\mathcal{O}_X(-D_j)$. 
We obtain the decomposition as in Theorem \ref{thm:lelong number vs intersection number}:
\begin{align*}
dd^c\pi^*\log|g_j|=m_j[E]+T_j,
\end{align*}
with $T_j=\theta_j+dd^c\varphi_j$, such that
\begin{itemize}
\item
$\theta_j=m_jdd^c\psi$ is a closed, semi-positive $(1,1)$-form, 
\item
$\varphi_j\in\mathrm{C}^{\infty}(X'\setminus E)\cap \mbox{PSH}(X',\theta_j)$ has analytic singularities.
\end{itemize}
The same computation yields that
\begin{align*}
 &\ \nu([Y],0;\log|g_1|,\cdots,\log|g_n|)-(-1)^{n-1}(D_1)\cdot...\cdot( D_n)\\
=&\lim_{r\rightarrow0^+}
\Big(\int_X\chi_r\theta_1\wedge\theta_2\wedge\theta_3\wedge\cdots\wedge \theta_n+\int_X\varphi_1dd^c\chi_r\wedge\theta_2\wedge\theta_3\wedge\cdots\wedge\theta_n\\
+&\cdots\cdots\\
+&\cdots\cdots\\
+&\int_X\theta_1\wedge\varphi_2dd^c\chi_r\wedge dd^c\varphi_3\wedge\cdots\wedge dd^c\varphi_n+\int_X\varphi_1dd^c\chi_r\wedge dd^c\varphi_2\wedge dd^c\varphi_3\wedge\cdots\wedge dd^c\varphi_n\Big).
\end{align*}
We fix $r$ and choose $N$ sufficiently large such that $\varphi_j^N:=\max\{\varphi_j,-N\}$ coincides with $\varphi_j$ near $\{dd^c\chi_r\neq0\}$. 
Note that $\varphi_j^N$ is still $\theta_j$-psh thanks to $\theta_j$ is semi-positive. 
It follows that
\begin{align*}
&\int_X\chi_r\theta_1\wedge\theta_2\wedge\theta_3\wedge\cdots\wedge \theta_n+\int_X\varphi_1dd^c\chi_r\wedge\theta_2\wedge\theta_3\wedge\cdots\wedge\theta_n\\
+&\cdots\cdots\\
+&\cdots\cdots\\
+&\int_X\theta_1\wedge\varphi_2dd^c\chi_r\wedge dd^c\varphi_3\wedge\cdots\wedge dd^c\varphi_n+\int_X\varphi_1dd^c\chi_r\wedge dd^c\varphi_2\wedge dd^c\varphi_3\wedge\cdots\wedge dd^c\varphi_n\\
=&\int_X\chi_r\theta_1\wedge\theta_2\wedge\theta_3\wedge\cdots\wedge \theta_n+\int_X\varphi_1^Ndd^c\chi_r\wedge\theta_2\wedge\theta_3\wedge\cdots\wedge\theta_n\\
+&\cdots\cdots\\
+&\cdots\cdots\\
+&\int_X\theta_1\wedge\varphi_2^Ndd^c\chi_r\wedge dd^c\varphi_3^N\wedge\cdots\wedge dd^c\varphi_n^N+\int_X\varphi_1^Ndd^c\chi_r\wedge dd^c\varphi_2^N\wedge dd^c\varphi_3^N\wedge\cdots\wedge dd^c\varphi_n^N\\
=&\int_X\chi_r(\theta_1+dd^c\varphi_1^N)\wedge\cdots\wedge(\theta_n+dd^c\varphi_n^N)\geq0.
\end{align*}
This leads to the desired inequality
\begin{align*}
\nu([Y],0;\log|g_1|,\cdots,\log|g_n|)\geq(-1)^{n-1}(D_1)\cdot...\cdot(D_n).
\end{align*}
\end{proof}

\section{Multiplicities of the Grauert blow down of negative vector bundles}\label{sec:blow down bundles}

In this section, we study the normal blow down of negative vector bundles and give the proof of Theorem \ref{thm:vol control}, Theorem \ref{thm:mult of (Z,z_0)}, Theorem \ref{thm:RS} and Corollary \ref{cor:Weierstrass point}.

Let $L\ra M$ be an ample line bundle over compact complex manifold $M$. 
We will use the notations in Introduction (ahead of Theorem \ref{thm:vol control}).
For $f\in H^0(M, L^k)$, we can naturally viewed $f$ as a holomorphic function on $L^*$, whose restriction to each fiber
of $L^*$ is a homogeneous polynomial of degree $k$.
On the other hand, any holomorphic function $f$ on $L^*$ can be represented as a convergent series 
$$f=\sum^\infty_{k=0}f_k,\ f_k\in \in H^0(M, L^k).$$

%

\begin{prop}\label{prop:L globally generated}
Let $f:(L^*,M)\ra(Z,z_0)$ be the normal Grauert blow-down of $L^*$. Then the vanishing order of $f^*\mathfrak{M}_{Z,z_0}$ along $M$ is $k_0$ and the following statements are equivalent:
\begin{itemize}
\item[$(1)$]
the ideal subsheaf of $\mathcal O_{L^*}$ generated by $f^*\mathfrak{M}_{Z,z_0}$ is invertible,
\item[$(2)$]
$L^{k_0}$ is globally generated.
\end{itemize}

\end{prop}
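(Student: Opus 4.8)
The plan is to make the maximal ideal $\mathfrak{M}_{Z,z_0}$ completely explicit on the side of $L^*$ and then reduce the invertibility question to a one-variable computation in the fibre direction. Since $Z$ is normal and $f$ is a modification, $f_*\mathcal{O}_{L^*}=\mathcal{O}_Z$, so germs in $\mathcal{O}_{Z,z_0}$ are exactly holomorphic functions on a neighbourhood of $M$ in $L^*$. By the expansion recalled just before the proposition, such a function equals $\sum_{k\ge0}f_k$ with $f_k\in H^0(M,L^k)$, and it lies in $\mathfrak{M}_{Z,z_0}$ if and only if its value at $z_0$, i.e. the constant term $f_0$, vanishes. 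As $H^0(M,L^k)=0$ for $1\le k<k_0$, every element of $\mathfrak{M}_{Z,z_0}$ has the form $\sum_{k\ge k_0}f_k$. First I would record that a section $s\in H^0(M,L^k)$, viewed as a function on $L^*$, is fibrewise homogeneous of degree $k$: in a local trivialisation with base coordinate $z$ and fibre coordinate $w$ (so that $M=\{w=0\}$) it is \emph{exactly} $a_s(z)w^k$ with $a_s$ holomorphic and not identically zero, hence of order exactly $k$ along $M$ at a generic point. Consequently every $f^*\phi$ with $\phi\in\mathfrak{M}_{Z,z_0}$ is divisible by $w^{k_0}$, while $\phi=s$ a nonzero section of $L^{k_0}$ attains order $k_0$; this proves the first assertion $\mathrm{ord}_M(f^*\mathfrak{M}_{Z,z_0})=k_0$.

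Next I would observe that invertibility of $\mathcal{I}:=f^*\mathfrak{M}_{Z,z_0}\cdot\mathcal{O}_{L^*}$ is a purely local question along $M$: at a point of $L^*\setminus M$ the ideal $\mathcal{I}$ is the unit ideal (the maximal ideal sheaf of the point $z_0$ pulls back to $\mathcal{O}$ away from $z_0$), hence automatically invertible there. So I fix $x\in M$ and work in the local ring $\mathbb{C}\{z,w\}$, which is a UFD; writing $\mathfrak{M}_{Z,z_0}=(z_1,\dots,z_M)$, I decompose each generator $f^*z_i=\sum_{k\ge k_0}a_{(z_i)_k}(z)\,w^k$ into its homogeneous pieces, where $(z_i)_k\in H^0(M,L^k)$.

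For the two implications: $(2)\Rightarrow(1)$ is direct. If $L^{k_0}$ is globally generated at $x$, choose $s\in H^0(M,L^{k_0})$ with $s(x)\neq0$; then $a_s$ is a unit near $x$ and $f^*s=a_s\,w^{k_0}\in\mathcal{I}_x$, so $w^{k_0}\in\mathcal{I}_x$, whereas every element of $\mathcal{I}_x$ is divisible by $w^{k_0}$; thus $\mathcal{I}_x=(w^{k_0})$ is principal. For $(1)\Rightarrow(2)$, assume $\mathcal{I}_x=(h)$. By ampleness some power $L^{k}$ with $k\ge k_0$ is globally generated at $x$, giving $t\in H^0(M,L^k)$ with $t(x)\neq0$ and hence $w^k\in\mathcal{I}_x=(h)$; since $\mathbb{C}\{z,w\}$ is a UFD, $h\mid w^k$ forces $h$ to be an associate of $w^{j}$, and the order computation above pins down $j=k_0$, so $\mathcal{I}_x=(w^{k_0})$. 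It then remains to extract a section: writing $w^{k_0}=\sum_i c_i\,f^*z_i$ and comparing coefficients of $w^{k_0}$ yields $1=\sum_i c_{i,0}(z)\,a_{(z_i)_{k_0}}(z)$ in $\mathbb{C}\{z\}$, where $(z_i)_{k_0}\in H^0(M,L^{k_0})$. Hence the local base ideal of $L^{k_0}$ at $x$ is the unit ideal, i.e. some section of $L^{k_0}$ is nonzero at $x$, proving global generation at $x$; as $x$ was arbitrary, $(2)$ holds.

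The subtle point — and the only place where looking at $L^{k_0}$ alone is not visibly enough — is the implication $(1)\Rightarrow(2)$: a priori the higher homogeneous pieces $f_k$ with $k>k_0$ enter $\mathcal{I}_x$ and could conceivably make it principal even if $L^{k_0}$ had a base point at $x$. What rules this out is the combination of the UFD structure of $\mathbb{C}\{z,w\}$ (forcing a principal generator that divides some $w^k$ to be $w^{k_0}$ up to a unit) with the fact that extracting the coefficient of $w^{k_0}$ records \emph{exactly} the degree-$k_0$ sections and nothing of higher degree. I expect the one technical step requiring care to be the legitimacy of this coefficient extraction, namely that $\mathcal{I}_x$ is genuinely generated by the homogeneous pieces $a_{(z_i)_k}(z)\,w^k$; this uses the $\mathfrak{m}$-adic closedness of finitely generated ideals in the Noetherian ring $\mathbb{C}\{z,w\}$ to make sense of the convergent series $f^*z_i=\sum_k a_{(z_i)_k}(z)\,w^k$ as lying in the ideal generated by its terms.
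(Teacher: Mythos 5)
Your proposal is correct, and it rests on the same two pillars as the paper's proof: the fibrewise-homogeneous expansion $f=\sum_{k}f_k$ with $f_k\in H^0(M,L^k)$, and the normality of $Z$, which lets sections of $L^k$ (viewed as functions on $L^*$) be pushed down to elements of $\mathfrak{M}_{Z,z_0}$. Your treatment of the first assertion and of $(2)\Rightarrow(1)$ is in substance the paper's. The genuine divergence is in $(1)\Rightarrow(2)$. The paper fixes an embedding $(Z,z_0)\subset(\mathbb{C}^N,0)$, uses that a principal stalk generated by the $f_j=f^*z_j$ must be generated by one of them, deduces the set-theoretic equality $f_1^{-1}(0)=M$ near the point, writes $f_1=\lambda^{k_0}h$ with $h$ a unit, and reads off $f_{1,k_0}(p)\neq0$ from the Taylor expansion. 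You instead characterize the abstract local generator: ampleness supplies $w^k\in\mathcal{I}_x$ for some $k$, the UFD property of $\mathbb{C}\{z,w\}$ forces $\mathcal{I}_x=(w^j)$, divisibility bounds pin $j=k_0$, and extracting the $w^{k_0}$-coefficient in $w^{k_0}=\sum_i c_i\,f^*z_i$ produces a section of $L^{k_0}$ not vanishing at $x$. Your route invokes one global input (global generation of some power of $L$) that the paper's argument does not need, but in exchange it avoids the set-theoretic/divisorial step and the reduction to a coordinate generator; both arguments are sound.

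Two precisions. First, to pin $j=k_0$ you need a nonzero $s\in H^0(M,L^{k_0})$ to have $w$-divisibility order exactly $k_0$ at the \emph{given} point $x$ (which may be a base point of $L^{k_0}$), not merely at a generic point of $M$; this does follow from your exact local form $s=a_s(z)w^{k_0}$, since $w^{k_0+1}\mid s$ would force $a_s\equiv0$ near $x$, hence $s\equiv0$ by the identity theorem on the connected manifold $M$ — but it deserves to be said, since "order at a generic point" alone is not what the factorization argument consumes. Second, the technical worry you flag at the end is unnecessary: the coefficient comparison does not require $\mathcal{I}_x$ to be generated by the homogeneous pieces $a_{(z_i)_k}(z)w^k$, nor any $\mathfrak{m}$-adic closedness; it only uses the uniqueness of the expansion of a germ in $\mathbb{C}\{z,w\}$ as a power series in $w$ with coefficients in $\mathbb{C}\{z\}$, applied to the two sides of the single identity $w^{k_0}=\sum_i c_i\,f^*z_i$.
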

\begin{proof}
We may assume that the germ $(Z,z_0)$ is embedded in $(\mathbb{C}^N_z,0)$ and set $f_j:=f^*z_j$, $j=1,\cdots,N$.
We expand $f_j$ into Taylor series on $L^*$
\begin{align*}
f_j=\sum_{k=0}f_{j,k},\ \ f_{j,k}\in H^0(M,L^{k}).
\end{align*}
It is clear $\mathrm{ord}_M(f_j)\geq k_0$ by the assumption that $H^0(M,L)=\cdots=H^0(M,L^{k_0-1})=0$.
Also by assuption, there is some $ g\in H^0(M,L^{k_0})$, $g\neq0$. 
As above, $g$ can be viewed as a holomorphic function on $L^*$. The normality of $Z$ ensures that $(f^{-1})^*g$, as a holomorphic function on $Z\backslash\{z_0\}$, is holomorphic on $Z$, so $(f^{-1})^*g$ can be expressed as power series $\sum c_{\alpha}z^{\alpha}$ near $z_0$. 
It follows that, near the zero section $M$ of $L^*$, 
\begin{align*}
g=\sum_{\alpha}c_{\alpha}f_1^{\alpha_1}\cdot...\cdot f_N^{\alpha_N}.
\end{align*}
Since $\mathrm{ord}_M(g)=k_0$, we can infer that for some $j$, $\mathrm{ord}_M(f_j)=k_0$. 
This proves our first statement.

$(1)\Rightarrow(2)$: For a fixed $p\in M$, we may assume that $f_1$ generates $(f_1,\cdots,f_N)_p$, namely 
$\frac{f_2}{f_1},\cdots,\frac{f_n}{f_1}$ are holomorphic near $p$, 
therefore, $M=f_1^{-1}(0)\cap\cdots\cap f_N^{-1}(0)=f_1^{-1}(0)$ near $p$. 
As a result, $f_1=\lambda^{k_0}h$ near $p$, where $h$ is non-vanishing and $\lambda$ is a coordinate function such that $\{\lambda=0\}=M$ near $p$. 
We expand $f_1$ into Taylor series:
\begin{align*}
f_1=\sum_{k=k_0}f_{1,k},\ \ f_{1,k}\in H^0(M,L^{k}).
\end{align*}
Clearly, $f_{1,k_0}$ does not vanish at $p$, hence $(2)$ follows.

$(2)\Rightarrow(1)$: For a fixed $p\in M$, there is some $g\in H^0(M,L^{k_0})$ such that $g(p)\neq0$. 
As before, we have $g=\sum_{\alpha}c_{\alpha}f_1^{\alpha_1}\cdot...\cdot f_N^{\alpha_N}$. 
This implies that for some $j$, $f_j=\lambda^{k_0}h$ near $p$, $h(p)\neq0$. 
Therefore, $f_{j}$ generates $(f_1,\cdots,f_N)_p$ and $(1)$ follows as desired.
\end{proof}

We now give the proof of Theorem \ref{thm:vol control}.
\begin{proof}[Proof of Theorem \ref{thm:vol control}.]
Let $f:(L^*,M)\ra (Z,z_0)$ be the normal Grauert blow-down. 
We select $g_1,\cdots,g_N\in H^0(M,L^{k_1})$ such that they generate the fibers of $L^{k_1}$ at every point of $M$. 
Regarding $g_i$ as holomorphic functions on $L^*$, we see that $\{g_1=\cdots=g_N=0\}=M$.
Since $Z$ is normal and $f$ induces a biholomorphic map from $L^*\backslash M$  to $X\backslash\{z_0\}$,
$g_1,\cdots, g_N$ induce holomorphic functions, say $\tilde g_1,\cdots, \tilde g_N$, on $Z$, whose common zero set is the single point $z_0$.
Therefore $\mathfrak{U}:=( \tilde g_1,\cdots, \tilde g_N)$ is an $\mathfrak{M}$-primary ideal in $\mathcal{O}_{Z,z_0}$. Clearly, the vanishing order of $f^*\mathfrak{U}$ along $M$ is $k_1$. 
The proof of the volume inequality will involve the mixed multiplicities $e(\mathfrak{U}^{[n-p]};\mathfrak{M}^{[p+1]}), e(\mathfrak{U}^{[n+1-p]};\mathfrak{M}^{[p]})$.

On an open subset $U\subseteq M$ such that $L^*|_{U}\cong U_z\times\mathbb{C}_{\lambda}$, 
we have $g_i(z,\lambda)=\lambda^{k_1}\varphi_i(z)$ and $\varphi_i$ does not vanish simultaneously on $U$, therefore
\begin{align*}
\log|g|=\frac{1}{2}\log(|g_1|^2+\cdots+|g_N|^2)=k_1\log|\lambda|+\frac{1}{2}\log(|\varphi_1|^2+\cdots+|\varphi_N|^2)
\end{align*}
and
\begin{align*}
dd^c\log|g|=k_1[M]+\theta,\ \theta=\frac{1}{2}dd^c\log(|\varphi_1|^2+\cdots+|\varphi_N|^2).
\end{align*}
Similarly, if $\mathfrak{M}$ is generated by $z_1,\cdots,z_{M}$, for $|f|=(|f^*z_1|^2+\cdots+|f^*z_{M}|)^{\frac{1}{2}}$, we have
\begin{align*}
dd^c\log|f|=k_0[M]+T, \ T=dd^c\log\big(\big|\frac{f}{\lambda^{k_0}}\big|\big).
\end{align*}
Since the singularities of currents $(\theta^{[n+1-p]},T^{[p]})$, $(\theta^{[n-p]},T^{[p+1]})$ satisfy the dimension conditions in Theorem \ref{thm:main thm},
we can derive the explicit fomula for multiplicities $e(\mathfrak{U}^{[n-p]};\mathfrak{M}^{[p+1]})$ and $e(\mathfrak{U}^{[n+1-p]};\mathfrak{M}^{[p]})$ as follows:
\begin{align*}
e(\mathfrak{U}^{[n+1-p]};\mathfrak{M}^{[p]})&=(-1)^n(k_1M)^{n+1-p}\cdot(k_0M)^p+\int_M \theta^{n+1-p}\wedge T^{p}\\
&=k_1^{n+1-p}k_0^{p}\int_Mc_1(L)^n+\int_M\theta^{n+1-p}\wedge T^p,\\
e(\mathfrak{U}^{[n-p]};\mathfrak{M}^{[p+1]})&=(-1)^n(k_1M)^{n-p}\cdot(k_0M)^{p+1}+\int_M \theta^{n-p}\wedge T^{p+1}\\
&=k_0^{n-p}k_1^{p+1}\int_Mc_1(L)^n+\int_M\theta^{n-p}\wedge T^{p+1}.
\end{align*}
Note that $T$ is smooth on $L^*\setminus B$, $\theta^{n+1-p}\wedge T^{p}$ puts no mass on $M\setminus B$, so we have
\begin{align*}
\int_M\theta^{n+1-p}\wedge T^p=\int_B\theta^{n+1-p}\wedge T^p=\int_{L^*}\theta^{n+1-p}\wedge\mathds{1}_B T^{p}.
\end{align*}
$T^p$ is a closed positive current of bidimension $(n+1-p,n+1-p)$ on $L^*$, the support theorem for closed current implies that $\mathds{1}_BT^p=0$. Consequently, we obtain that
\begin{align}
e(\mathfrak{U}^{[n+1-p]};\mathfrak{M}^{[p]})=k_0^pk_1^{n+1-p}\int_Mc_1(L)^n.
\end{align}
Similarly, we have
\begin{align*}
\int_M\theta^{n-p}\wedge T^{p+1}=\int_{L^*}\theta^{n-p}\wedge\mathds{1}_B T^{p+1}.
\end{align*}
Note that $\mathds{1}_B T^{p+1}=\sum\lambda_j[B_j]$, 
where $\lambda_j\in\mathbb{N}^*$ and $B_j$ is the irreducible component of $B$, $\mathrm{codim}_M B_j=p$ (see notations and notions ahead of Theorem \ref{thm:vol control} in Introduction).
Note that $\{\theta|_M\}=c_1(k_1L)$, so we obtain that
\begin{align}
e(\mathfrak{U}^{[n-p]};\mathfrak{M}^{[p+1]})=k_0^{p+1}k_1^{n-p}\int_Mc_1(L)^n+k_1^{n-p}\sum_{j}\lambda_j\int_{B_j}c_1(L)^{n-p}.
\end{align}\label{eq:eq for mixed mult and ch class}
Since $\mathfrak{U}\subseteq \mathfrak{M}$, the inequality $e(\mathfrak{U}^{[n-p]};\mathfrak{M}^{[p+1]})\leq e(\mathfrak{U}^{[n+1-p]};\mathfrak{M}^{[p]})$ then yields that
\begin{align*}
\sum_{j}\lambda_j\int_{B_j}c_1(L)^{n-p}\leq (k_0^pk_1-k_0^{p+1})\int_Mc_1(L)^n.
\end{align*}
The proof is complete.
\end{proof}


Now we can complete the proof of Theorem \ref{thm:mult of (Z,z_0)}. We need Teissier's inequality for mixed multiplicities \cite{Tei77},\cite{RS78}:
\begin{lem}[Rees-Sharp]\label{lem:Rees-Sharp inequality}
Let $(\mathcal{O},\mathfrak{M})$ be a local commutative Noetherian ring of dimension $d\geq2$. 
Suppose $\mathfrak{U},\mathfrak{V}$ are two $\mathfrak{M}$-primary ideals in $\mathcal{O}$, then for $i=1,\cdots,d-1$, it holds that
\begin{align*}
e(\mathfrak{U}^{[i]};\mathfrak{V}^{[d-i]})^2 \leq e(\mathfrak{U}^{[i-1]};\mathfrak{V}^{[d-i+1]})\cdot e(\mathfrak{U}^{[i+1]};\mathfrak{V}^{[d-i-1]}).
\end{align*}
\end{lem}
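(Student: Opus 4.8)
The plan is to reduce the stated algebraic inequality, by means of the intersection-theoretic formula of Theorem \ref{thm:main thm}, to a purely numerical statement about exceptional divisors, and then to recognize that numerical statement as a Cauchy--Schwarz / Hodge-index inequality in the \emph{negative} (exceptional) regime. In the geometric setting where the lemma is applied here, $\mathcal{O}=\mathcal{O}_{Z,z_0}$ is the analytic local ring of a normal (hence locally irreducible and reduced) germ of dimension $d=n$, so I would first pass to a common desingularization $\pi:(X,E)\to(Z,z_0)$ dominating the normalized blow-ups of $\mathfrak{U}$ and of $\mathfrak{V}$. On such a model both $\mathfrak{U}\mathcal{O}_X=\mathcal{O}_X(-D_U)$ and $\mathfrak{V}\mathcal{O}_X=\mathcal{O}_X(-D_V)$ are invertible, with $D_U,D_V$ effective divisors supported on the exceptional hypersurface $E=\pi^{-1}(z_0)_{\mathrm{red}}$. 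Since the ideal subsheaves are now invertible, the invertible case of Theorem \ref{thm:main thm} (second bullet of the remark following it) applies with $i$ copies of $\mathfrak{U}$ and $d-i$ copies of $\mathfrak{V}$, giving for every $0\le i\le d$
\[
e\big(\mathfrak{U}^{[i]};\mathfrak{V}^{[d-i]}\big)=(-1)^{d-1}\,D_U^{\,i}\cdot D_V^{\,d-i}.
\]

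Substituting this into the three multiplicities occurring in the lemma, the global sign $(-1)^{d-1}$ appears squared on the left and as a factor in each of the two terms on the right, so it cancels identically. Hence the assertion is equivalent to the numerical log-convexity inequality
\[
\big(D_U^{\,i}\cdot D_V^{\,d-i}\big)^2\ \le\ \big(D_U^{\,i-1}\cdot D_V^{\,d-i+1}\big)\big(D_U^{\,i+1}\cdot D_V^{\,d-i-1}\big)
\]
for the exceptional divisors $D_U,D_V$ on $X$. It is exactly this $\le$ that must be proved; I emphasize that it is the \emph{reverse} of the familiar Khovanskii--Teissier log-concavity, and that the reversal is forced by the fact that the relevant classes are exceptional rather than nef.

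To establish the numerical inequality I would first isolate the mechanism in the surface case $d=2$, where it reads $(D_U\cdot D_V)^2\le (D_V\cdot D_V)(D_U\cdot D_U)$. Writing $D_U=\sum_i m_i E_i$ and $D_V=\sum_i n_i E_i$ in the basis of irreducible components of $E$, this is precisely the Cauchy--Schwarz inequality for the symmetric bilinear form $Q(E_i,E_j)=E_i\cdot E_j$ on the lattice spanned by the $E_i$. Because $E$ is contractible to the point $z_0$, Grauert's theorem \cite{Gr62} guarantees that $Q$ is negative definite; applying ordinary Cauchy--Schwarz to the positive definite form $-Q$ yields the claim, the signs squaring out. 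This pinpoints the correct regime: the exceptional classes lie in the negative-definite part of the intersection pairing, so it is ordinary Cauchy--Schwarz (not the Lorentzian reverse inequality governing nef classes) that applies, which is what produces log-convexity rather than log-concavity.

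For general $d$ I would descend to this definite situation by successively cutting with general members of the classes $-D_U,-D_V$, which are $\pi$-free (the invertible sheaves $\mathcal{O}_X(-D_U),\mathcal{O}_X(-D_V)$ are generated relative to $\pi$ by the pullbacks of generators of $\mathfrak{U},\mathfrak{V}$, hence $\pi$-semiample, hence movable over the base), thereby reducing to a two-dimensional slice on which Grauert negative-definiteness applies; the dependence on the index $i$ is then recovered by the standard interpolation deducing all $i$ from the middle case. The main obstacle is precisely this passage from $d=2$ to higher dimension: one must verify that the cutting preserves both the exceptionality (negativity) of the restricted classes and the relevant intersection numbers, i.e. that the higher local Hodge-index inequality for exceptional divisors holds with the log-convex sign. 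This higher-dimensional inequality is the substance of the Teissier--Rees--Sharp theorem, proved for an arbitrary Noetherian local ring in \cite{Tei77} and \cite{RS78}; since our applications require only the analytic-local case, the reduction above together with \cite{Gr62} and that Hodge-index input suffices, and one may alternatively invoke \cite{Tei77},\cite{RS78} directly.
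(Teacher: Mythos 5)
The paper itself gives no proof of this lemma: it is quoted as a known theorem, with \cite{Tei77} and \cite{RS78} as the references, so the benchmark is whether your argument stands on its own. The parts that do stand: principalizing $\mathfrak{U}$ and $\mathfrak{V}$ on a common resolution and using the invertible case of Theorem \ref{thm:main thm} to write $e(\mathfrak{U}^{[i]};\mathfrak{V}^{[d-i]})=(-1)^{d-1}D_U^{\,i}\cdot D_V^{\,d-i}$ is legitimate for the analytic local rings arising in this paper (the smooth $T_j$ put no mass on $E$), your sign bookkeeping is right, and the $d=2$ case is correct and complete: Grauert's criterion \cite{Gr62} gives negative definiteness of the intersection form $Q$ on the components of $E$, and ordinary Cauchy--Schwarz for $-Q$ is exactly the asserted inequality. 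Your observation that the sign is log-convex, the reverse of Khovanskii--Teissier log-concavity, precisely because the classes are exceptional rather than nef, is also accurate. This is in essence Teissier's original route to the theorem.

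The gap, which you flag yourself, is the passage to $d\ge 3$, and it is genuine rather than routine. A general member of the relative linear system of $\mathcal{O}_X(-D_U)$ is the strict part $H_g=\mathrm{div}(\pi^*g)-D_U$ for general $g\in\mathfrak{U}$; it is not contained in $E$ and not compactly supported over $z_0$. After cutting by $i-1$ such members and $d-i-1$ members coming from $\mathfrak{V}$, one obtains a surface $S$ on which the numbers $D_U^{\,i}\cdot D_V^{\,d-i}$ can be computed, but one must then prove that $E\cap S$ is a contractible, hence negative definite, curve configuration on $S$ --- i.e., that $S\to\pi(S)$ is again a bimeromorphic map onto a normal two-dimensional germ with exceptional set $E\cap S$, and that the slices are generic enough (Bertini near $E$, superficial elements downstairs) for the multiplicities to be preserved. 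Establishing exactly this is the technical substance of Teissier's proof, and it is where his original argument needed extra hypotheses; asserting it is circular as written. Your phrase about ``standard interpolation deducing all $i$ from the middle case'' is also off: each index $i$ requires its own mixed slice ($i-1$ cuts from $\mathfrak{U}$, $d-i-1$ from $\mathfrak{V}$), and the family of quadratic inequalities cannot be derived from a single member. Finally, the lemma as stated concerns an arbitrary Noetherian local ring, which no resolution-based argument reaches; that generality is precisely why \cite{RS78} is the standard citation. Since you end by invoking \cite{Tei77},\cite{RS78} anyway, your proposal in effect reproduces the paper's treatment (a citation), prefixed by a correct proof of the surface case and an unexecuted higher-dimensional sketch.
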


\begin{thm}(=Theorem \ref{thm:mult of (Z,z_0)}) 
Let $M$ be a compact complex manifold of dimension $n$, and $L$ be an ample line bundle over $M$. 
Suppose $(Z,z_0)$ is the normal Grauert blow-down of the dual bundle $L^*$, it holds that
\begin{align*}
&\mathrm{mult}(Z,z_0)\geq k_0^{n+1}\mathrm{vol}(L)+(n+1-p)k_0^{n-p}\mathrm{vol}_B(L),\\
&\mathrm{mult}(Z,z_0)\leq k_0^{p+1}k_1^{n-p}\mathrm{vol}(L)+k_1^{n-p}\mathrm{vol}_B(L).
\end{align*}

\end{thm}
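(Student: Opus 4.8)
The plan is to organize every relevant multiplicity into a single log-convex sequence and then read off both inequalities from its convexity together with the two mixed multiplicities already evaluated in the proof of Theorem \ref{thm:vol control}. Write $d:=\dim Z=n+1$, let $\mathfrak{U}\subseteq\mathfrak{M}$ be the $\mathfrak{M}$-primary ideals in $\mathcal{O}_{Z,z_0}$ used there ($\mathfrak{U}$ generated by the sections of $L^{k_1}$), and set $a_i:=e(\mathfrak{U}^{[i]};\mathfrak{M}^{[d-i]})$ for $0\le i\le d$. Then $a_0=e(\mathfrak{M})=\mathrm{mult}(Z,z_0)$, every $a_i$ is strictly positive (being the multiplicity of $\mathfrak{M}$-primary ideals), and formulas $(1)$ and $(2)$ in the proof of Theorem \ref{thm:vol control} supply the two boundary values
\[
a_{n-p}=k_1^{n-p}\big(k_0^{p+1}\,\mathrm{vol}(L)+\mathrm{vol}_B(L)\big),\qquad a_{n+1-p}=k_0^{p}k_1^{n+1-p}\,\mathrm{vol}(L).
\]

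For the upper bound I would simply invoke monotonicity of mixed multiplicities (the Remark following Theorem \ref{thm:mult vs lelong}): since $\mathfrak{U}\subseteq\mathfrak{M}$, replacing one copy of $\mathfrak{M}$ by $\mathfrak{U}$ can only increase the multiplicity, so $(a_i)$ is non-decreasing. In particular $\mathrm{mult}(Z,z_0)=a_0\le a_{n-p}$, and $a_{n-p}$ is exactly the asserted upper bound $k_0^{p+1}k_1^{n-p}\mathrm{vol}(L)+k_1^{n-p}\mathrm{vol}_B(L)$.

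For the lower bound I would feed the whole sequence into the Rees--Sharp inequality (Lemma \ref{lem:Rees-Sharp inequality}) with $\mathfrak{V}=\mathfrak{M}$, which gives $a_i^2\le a_{i-1}a_{i+1}$ for $1\le i\le d-1$; hence $(a_i)$ is log-convex and the ratios $r_i:=a_{i+1}/a_i$ are non-decreasing. Telescoping $r_0\le\cdots\le r_{n-p}$ then yields
\[
\frac{a_{n-p}}{a_0}=\prod_{i=0}^{n-p-1}r_i\le r_{n-p}^{\,n-p},\qquad\text{so}\qquad a_0\ge\frac{a_{n-p}^{\,n+1-p}}{a_{n+1-p}^{\,n-p}}=\frac{\big(k_0^{p+1}\,\mathrm{vol}(L)+\mathrm{vol}_B(L)\big)^{n+1-p}}{k_0^{\,p(n-p)}\,\mathrm{vol}(L)^{n-p}},
\]
where in the last step the powers of $k_1$ cancel exactly. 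Expanding the numerator by the binomial theorem, the $j=0$ term reproduces $k_0^{n+1}\mathrm{vol}(L)$ and the $j=1$ term reproduces $(n+1-p)k_0^{n-p}\mathrm{vol}_B(L)$, while every $j\ge 2$ term is non-negative; discarding the latter gives precisely the claimed lower bound.

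The genuinely substantive input here --- the explicit evaluation of $a_{n-p}$ and $a_{n+1-p}$ through Theorem \ref{thm:main thm} and the support theorem --- has already been done for Theorem \ref{thm:vol control}, so what remains is essentially formal. The only place demanding care is the exponent bookkeeping in the binomial expansion, to confirm that the leading two terms match the stated bound and that the $k_1$-powers really cancel; I expect no real obstacle. As a consistency check I would also verify the degenerate ranges $p=n$ (both bounds collapse to the exact value $a_0=a_{n-p}$) and $B=\emptyset$ (then $k_1=k_0$ and $\mathrm{vol}_B(L)=0$, so both bounds give $k_0^{n+1}\mathrm{vol}(L)$).
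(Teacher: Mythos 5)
Your proposal is correct and follows essentially the same route as the paper: the same two evaluated mixed multiplicities $e(\mathfrak{U}^{[n-p]};\mathfrak{M}^{[p+1]})$ and $e(\mathfrak{U}^{[n+1-p]};\mathfrak{M}^{[p]})$ from the proof of Theorem \ref{thm:vol control}, monotonicity of mixed multiplicities for the upper bound, and the Rees--Sharp log-convexity telescoped to $\mathrm{mult}(Z,z_0)\geq a_{n-p}^{\,n+1-p}/a_{n+1-p}^{\,n-p}$ followed by a binomial expansion for the lower bound. The only (minor) difference is that the paper treats the case where $L^{k_0}$ is globally generated (i.e.\ $B=\emptyset$, where $p$ is undefined) as a separate case via Proposition \ref{prop:L globally generated} and Theorem \ref{thm:main thm}, which you handle only as a concluding consistency check.
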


\begin{proof}
We use the notations in the proof of Theorem \ref{thm:vol control}.

If $L^{k_0}$ is globally generated, $f^*\mathfrak{M}$ is invertible by Proposition \ref{prop:L globally generated}, Theorem \ref{thm:main thm} then implies that
\begin{align*}
\mathrm{mult}(Z,z_0)=e(\mathfrak{M}^{[n+1]})=(-1)^n(k_0M)^{n+1}=k_0^{n+1}\mathrm{vol}(L).
\end{align*}
If $L^{k_0}$ is not globally generated, i.e. $B$ is non-empty, Lemma \ref{lem:Rees-Sharp inequality} implies that for $i=1,\cdots,n$,
\begin{align*}
e(\mathfrak{U}^{[n+1-i]};\mathfrak{M}^{[i]})^2\leq e(\mathfrak{U}^{[n+2-i]};\mathfrak{M}^{[i-1]})\cdot e(\mathfrak{U}^{[n-i]};\mathfrak{M}^{[i+1]}).
\end{align*}
In particular, $\frac{e(\mathfrak{M}^{[n+1]})}{e(\mathfrak{U}^{[1]},\mathfrak{M}^{[n]})} \geq \frac{e(\mathfrak{U}^{[1]},\mathfrak{M}^{[n]})}{e(\mathfrak{U}^{[2]},\mathfrak{M}^{[n-1]})} \geq \cdots \geq \frac{e(\mathfrak{U}^{[n-p]},\mathfrak{M}^{[p+1]})}{e(\mathfrak{U}^{[n+1-p]},\mathfrak{M}^{[p]})}$, so we obtain that
\begin{align*}
\mathrm{mult}(Z,z_0)=e(\mathfrak{M}^{[n+1]})
&= \prod_{i=1}^{n+1-p}\frac{e(\mathfrak{U}^{[i-1]};\mathfrak{M}^{[n+i]})}{e(\mathfrak{U}^{[i]};\mathfrak{M}^{[n+1-i]})} \cdot e(\mathfrak{U}^{[n+1-p]};\mathfrak{M}^{[p]})\\
&\geq \Big(\frac{e(\mathfrak{U}^{[n-p]};\mathfrak{M}^{[p+1]})}{e(\mathfrak{U}^{[n+1-p]};\mathfrak{M}^{[p]})}\Big)^{n+1-p}\cdot e(\mathfrak{U}^{[n+1-p]};\mathfrak{M}^{[p]}).
\end{align*}
By the proof of Theorem \ref{thm:vol control},
\begin{align*}
\frac{e(\mathfrak{U}^{[n-p]};\mathfrak{M}^{[p+1]})}{e(\mathfrak{U}^{[n+1-p]};\mathfrak{M}^{[p]})}&=\frac{k_0}{k_1}+\frac{\mathrm{vol}_B(L)}{k_0^pk_1\mathrm{vol}(L)},\\
e(\mathfrak{U}^{[n+1-p]};\mathfrak{M}^{[p]})&=k_0^pk_1^{n+1-p}\mathrm{vol}(L),
\end{align*}
we can infer that
\begin{align*}
\mathrm{mult}(Z,z_0)&\geq \Big(\frac{k_0}{k_1}+\frac{\mathrm{vol}_B(L)}{k_0^pk_1\mathrm{vol}(L)}\Big)^{n+1-p}\cdot \Big( k_0^pk_1^{n+1-p}\mathrm{vol}(L)\Big)\\
&\geq k_0^{n+1}\mathrm{vol}(L)+(n+1-p)k_0^{n-p}\mathrm{vol}_B(L).
\end{align*}
So we get the first inequality in the theorem.

The second inequality follows from \eqref{eq:eq for mixed mult and ch class} in the proof of Theorem \ref{thm:vol control}, and the formula 
$$\mathrm{mult}(Z,z_0)=e(\mathfrak{M}^{[n+1]})\leq e(\mathfrak{U}^{[n-p]};\mathfrak{M}^{[p+1]}).$$
\end{proof}

We can now deduce Corollary \ref{cor:g.g case} from Theorem \ref{thm:mult of (Z,z_0)}. 
\begin{thm}(= Corollary \ref{cor:g.g case})
Let $M$ be a compact manifold of dimension $n$, $E$ be an ample vector bundle over $M$. 
Suppose $(Z,z_0)$ is the normal Grauert blow-down of the dual bundle $E^*$, then $E$ is globally generated if and only if
\begin{align*}
\mathrm{mult}(Z,z_0)=\int_Ms_n(E^*).
\end{align*}

\end{thm}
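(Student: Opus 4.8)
The plan is to reduce the higher-rank statement to the line-bundle case established in Theorem \ref{thm:mult of (Z,z_0)} by passing to the projectivization of $E^*$. Write $r=\mathrm{rank}(E)$ and let $\varpi:\mathbb{P}(E^*)\to M$ be the projective bundle of lines in the fibers of $E^*$, a compact complex manifold of dimension $N:=n+r-1$. Let $\mathcal{O}_{\mathbb{P}(E^*)}(-1)\subset\varpi^*E^*$ be the tautological sub-line-bundle and set $\mathcal{L}:=\mathcal{O}_{\mathbb{P}(E^*)}(1)$. First I would record the three standard facts to be used: (i) $\varpi_*\mathcal{L}^m=\mathrm{Sym}^m E$, so $H^0(\mathbb{P}(E^*),\mathcal{L}^m)=H^0(M,\mathrm{Sym}^m E)$ and, in particular, $\mathcal{L}$ is globally generated if and only if $E$ is; (ii) $\mathcal{L}$ is ample, since $E$ is ample, by the definition of ampleness for vector bundles; and (iii) the total space of $\mathcal{L}^*=\mathcal{O}_{\mathbb{P}(E^*)}(-1)$ is canonically the blow-up of the total space of $E^*$ along its zero section $M$.

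Next I would identify the two blow-downs. The blow-up map $\sigma:\mathrm{Tot}(\mathcal{L}^*)\to\mathrm{Tot}(E^*)$ contracts the zero section $\mathbb{P}(E^*)$ of $\mathcal{L}^*$ onto $M$ and is biholomorphic away from it; composing with the Grauert blow-down $q:\mathrm{Tot}(E^*)\to Z$ gives a proper map contracting $\mathbb{P}(E^*)$ to $z_0$ and restricting to a biholomorphism $\mathrm{Tot}(\mathcal{L}^*)\setminus\mathbb{P}(E^*)\xrightarrow{\ \sim\ }Z\setminus\{z_0\}$. Since the normal Grauert blow-down is unique up to normalization and $Z$ is normal, $(Z,z_0)$ is exactly the normal Grauert blow-down of the negative line bundle $\mathcal{L}^*$ over $\mathbb{P}(E^*)$. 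This brings Theorem \ref{thm:mult of (Z,z_0)} into play with $M$ replaced by $\mathbb{P}(E^*)$, $L$ by $\mathcal{L}$, and $n$ by $N$.

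Then I would compute the volume appearing in that theorem. Writing $\xi=c_1(\mathcal{L})$, the Segre classes of $E^*$ are given by the Grothendieck pushforward $\varpi_*(\xi^{r-1+j})=s_j(E^*)$, normalized so that $s(E^*)=c(E^*)^{-1}$, in agreement with the convention fixed before the corollary. Since $\dim\mathbb{P}(E^*)=N=n+r-1$, projecting the top power gives
\begin{align*}
\mathrm{vol}(\mathcal{L})=\int_{\mathbb{P}(E^*)}\xi^{\,n+r-1}=\int_M\varpi_*\big(\xi^{\,(r-1)+n}\big)=\int_M s_n(E^*).
\end{align*}

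Finally I would run the equivalence through the line-bundle result. Let $k_0$ be the least $m$ with $H^0(\mathbb{P}(E^*),\mathcal{L}^m)\neq0$, which is finite as $\mathcal{L}$ is ample. If $E$ is globally generated then so is $\mathcal{L}$, hence $H^0(\mathbb{P}(E^*),\mathcal{L})\neq0$, forcing $k_0=1$; the biconditional recorded after Theorem \ref{thm:mult of (Z,z_0)} then yields $\mathrm{mult}(Z,z_0)=k_0^{N+1}\mathrm{vol}(\mathcal{L})=\int_M s_n(E^*)$. Conversely, suppose $\mathrm{mult}(Z,z_0)=\int_M s_n(E^*)=\mathrm{vol}(\mathcal{L})$. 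The first inequality in Theorem \ref{thm:mult of (Z,z_0)} gives $\mathrm{mult}(Z,z_0)\geq k_0^{N+1}\mathrm{vol}(\mathcal{L})$; since $\mathrm{vol}(\mathcal{L})>0$ and $k_0\geq1$, equality with $\mathrm{vol}(\mathcal{L})$ forces $k_0=1$, whence $\mathrm{mult}(Z,z_0)=k_0^{N+1}\mathrm{vol}(\mathcal{L})$ and the biconditional gives that $\mathcal{L}=\mathcal{L}^{k_0}$ is globally generated, i.e. $E$ is globally generated. The step I expect to require the most care is the canonical identification in the first two paragraphs — that blowing up the zero section of $E^*$ produces precisely $\mathrm{Tot}(\mathcal{O}_{\mathbb{P}(E^*)}(-1))$ and that the induced contraction is the very same normal Grauert blow-down $(Z,z_0)$ — together with pinning down the projectivization convention so that the pushforward yields $s_n(E^*)$ rather than $s_n(E)$.
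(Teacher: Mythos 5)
Your proposal is correct and takes essentially the same route as the paper's own proof: reduction to the line-bundle case via the projectivization $\mathbf{P}(E^*)$ and its tautological bundle, identification of $(Z,z_0)$ as the normal Grauert blow-down of $\mathcal{O}_{\mathbf{P}(E^*)}(-1)$, the Segre-class computation $\int_{\mathbf{P}(E^*)}c_1(\mathcal{O}_{\mathbf{P}(E^*)}(1))^{n+r-1}=\int_M s_n(E^*)$, and the equivalence via Theorem \ref{thm:mult of (Z,z_0)}. The only cosmetic difference is that where you invoke the biconditional recorded after Theorem \ref{thm:mult of (Z,z_0)}, the paper re-derives it in the line-bundle case directly from the two inequalities of that theorem (the lower bound forces $k_0=1$, and ampleness makes $\mathrm{vol}_B(L)>0$ whenever the base locus is nonempty, yielding the contradiction); the mathematical content is identical.
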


\begin{proof}[Proof of Corollary \ref{cor:g.g case}.]
We reduce general cases to the case that $E$ is a line bundle.
Let $\mathcal{O}_{\mathbf{P}(E^*)}(-1)$ be the tautological line bundle over the projectivation bundle
\begin{align*}
\textbf{P}(E^*):=(E^* - M)/\mathbb{C}^*,
\end{align*}
It is obvious that the Grauert blow down $f:E^*\rightarrow Z$ induces the Grauert blow down $\textbf{f}:\mathcal{O}_{\mathbf{P}(E^*)}(-1)\rightarrow Z$ in the following manner:
\begin{displaymath}
\xymatrix{
\mathcal{O}_{\mathbf{P}(E^*)}(-1) \ar[r]^{\ \ \ \ \ \pi}\ar@{-->}[dr]_{\textbf{f}:=f\circ \pi} &E^* 
\ar[d]^{f} \\
&Z 
}.
\end{displaymath}
There is a natural isomorphism
\begin{align*}
H^0(M,S^kE)\cong H^0(\mathbf{P}(E^*),\mathcal{O}_{\mathbf{P}(E^*)}(k))
\end{align*}
for $k\geq 1$, where $S^kE$ is the $k$-th symmetric power of $E$.
Moreover, $S^kE$ is globally generated if and only if $\mathcal{O}_{\mathbf{P}(E^*)}(k)$ is globally generated. 


Using the well-known fomula (see e.g. \cite[Lemma A.1]{Gr66}, \cite[Proposition 1.1]{Di16}):
\begin{align*}
\pi_{*}(c_1(\mathcal{O}_{\mathbf{P}(E^*)}(1))^{k+r-1})=s_{k}(E^*),\ k=0,1,\cdots,
\end{align*}
we obtain the fomula
\begin{align*}
\int_{\textbf{P}(E^*)}c_1(\mathcal{O}_{\mathbf{P}(E^*)}(1))^{n+r-1}=\int_{M}s_n(E^*).
\end{align*}

The above consideration reduces reduce the corollary to the case that $E$ is a line bundle, say $L$.
In this case, if $L$ is globally generated, we have $k_0=k_1=1$ and $\mathrm{vol}_B(L)=0$, Theorem \ref{thm:mult of (Z,z_0)} implies that $\mathrm{mult}(Z,z_0)=\int_Mc_1(L)^n$.
Conversely, the lower bound for multiplicity 
\begin{align*}
\mathrm{mult}(Z,z_0)\geq k_0^{n+1}\int_Mc_1(L)^n+(n-p+1)\mathrm{vol}_B(L)
\end{align*}
first implies that $k_0=1$. 
If the base locus of $L$ is non-empty, we have
\begin{align*}
\mathrm{vol}_B(L)=\sum\lambda_j\int_{B_j}c_1(L)^{n-p}>0
\end{align*}
thanks to $c_1(L)>0$. The inequality above yields that $\mathrm{mult}(Z,z_0)>\int_Mc_1(L)^n$, a contradiction. 
This proves that $L$ is globally generated. 
\end{proof}

\begin{problem}
Suppose $X$ is a complex manifold of dimension $n$, $E\subseteq X$ is an irreducible hypersurface which is exceptional, that is, there is a modification $f:(X,E)\ra (Z,z_0)$ such that $Z$ is a normal complex space with isolated singularity $z_0$.

Let $\mathcal{I}:=f^*\mathfrak{M}$ be the pull-back of the maximal ideal $\mathcal{M}$ in $\mathcal{O}_{Z,z_0}$, $k$ be the vanishing order of $\mathcal{I}$ along $E$.
Are the following two statements equivalent?
\begin{itemize}
\item
$\mathcal{I}$ is invertible,
\item
$\mathrm{mult}(Z,z_0)=(-1)^{n-1}(kE)^n$.
\end{itemize}
\end{problem}

In the special case that $M$ being a compact Riemann surface, we obtain the explicit fomula of $\mathrm{mult}(Z,z_0)$.
\begin{thm}(=Theorem \ref{thm:RS})
Let $M$ be a compact Riemann surface and $L$ be an ample line bundle on $M$. We introduce the following notations:
\begin{itemize}
\item[$\bullet$]
$k_0$: the minimal integer such that $L^{k_0}$ has non-zero global section,
\item[$\bullet$]
$P_1,\cdots,P_N$: base points of $L^{k_0}$,
\item[$\bullet$]
$k_1,\cdots,k_N$: the minimal integer such that $L^{k_j}$ generates $P_j$.
\item[$\bullet$]
$d_{j,k}$: the vanishing order of $H^0(M,L^{k})$ (viewed as holomorphic functions on $L^*$) at $P_j$, $j=1,\cdots,N$, $k=k_0,\cdots,k_j$.
\end{itemize}
Suppose that $(Z,z_0)$ is the normal Grauert blow-down of $L^*$, then
\begin{align*}
\mathrm{mult}(Z,z_0)=k_0^2\mathrm{deg}(L)+\sum_{j=1}^N\lambda_j,
\end{align*}
where $\lambda_j$ is the multiplicity of ideal generated by $\{z^{d_{j,k}}w^{k-k_0}:k=k_0,\cdots,k_j\}$ in $\mathcal{O}_{\mathbb{C}_{z,w}^2,0}$.
\end{thm}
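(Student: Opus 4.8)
The plan is to exploit the fact that the total space $X=L^*$ is already smooth, so that the normal Grauert blow-down $f\colon L^*\to Z$ is itself a desingularization of $(Z,z_0)$ whose exceptional set is the zero section $E=M$, a compact hypersurface in the surface $L^*$. Since $\dim Z=2$, I would invoke Corollary \ref{cor:two dim} with $\mathfrak U_1=\mathfrak U_2=\mathfrak M_{Z,z_0}$, writing the Siu decomposition $dd^c f^*\log|z|=D+T$. By Proposition \ref{prop:L globally generated} the vanishing order of $f^*\mathfrak M_{Z,z_0}$ along $M$ equals $k_0$, so $D=k_0 M$, and Corollary \ref{cor:two dim} gives $\mathrm{mult}(Z,z_0)=e(\mathfrak M;\mathfrak M)=-(k_0M)\cdot(k_0M)+\int_M T\wedge T$. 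The dimension condition needed for this formula is automatic in dimension two. It then suffices to evaluate the two terms separately.

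For the intersection term I would use the normal bundle of the zero section: $N_{M/L^*}\cong L^*$, whence $(M)\cdot(M)=\deg\big(\mathcal O_{L^*}(M)|_M\big)=\deg L^*=-\deg L$. Consequently $-(k_0M)\cdot(k_0M)=-k_0^2(M)\cdot(M)=k_0^2\deg L$, which is exactly the first summand of the claimed formula. All the remaining content is therefore the assertion $\int_M T\wedge T=\sum_{j}\lambda_j$.

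To compute the Monge–Amp\`ere term I would first localize $T$. In a trivialization $L^*|_U\cong U_z\times\mathbb C_w$, a section of $L^k$ becomes $w^k\tilde s(z)$, and with $f_i:=f^*z_i$ and $h_i:=f_i/w^{k_0}$ one has (as in the proof of Theorem \ref{thm:vol control}) $T=dd^c\log|h|$ for $h=(h_1,\dots,h_M)$. Because $f$ is biholomorphic off $M$, the $h_i$ have no common zero away from $M$; on $M$ their restrictions are $\tilde f_{i,k_0}$, and since the $f_i$ generate $\mathfrak M_{Z,z_0}$ the $f_{i,k_0}$ span $H^0(M,L^{k_0})$, so their common zero set on $M$ is exactly the base locus $\{P_1,\dots,P_N\}$. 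Thus $T$ has isolated singularities of codimension two, $T\wedge T$ is a measure carried by $\{P_j\}$, and $\int_M T\wedge T=\sum_j\nu_j$ where $\nu_j$ is the point mass at $P_j$. Applying Proposition \ref{prop:single ideal} to the smooth germ $(\mathbb C^2,P_j)$, this mass equals the Hilbert–Samuel multiplicity $e(\mathfrak I_{P_j})$ of the $\mathfrak M$-primary ideal $\mathfrak I_{P_j}:=(h_1,\dots,h_M)\subset\mathcal O_{\mathbb C^2,P_j}$.

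The main obstacle I anticipate is the final identification $\mathfrak I_{P_j}=\mathfrak J_{P_j}$, where $\mathfrak J_{P_j}:=(z^{d_{j,k}}w^{k-k_0}:k_0\le k\le k_j)$; this is the step that actually feeds in the definitions of $d_{j,k}$ and $k_j$. For $\mathfrak J_{P_j}\subseteq\mathfrak I_{P_j}$ I would pick, for each $k$, a section $s\in H^0(L^k)$ attaining the minimal order $d_{j,k}$, so that $\tilde s(z)=z^{d_{j,k}}\cdot(\mathrm{unit})$; since $s\in\mathfrak M_{Z,z_0}=(z_1,\dots,z_M)$, pulling back yields $f^*s\in(f_i)$, and dividing by $w^{k_0}$ gives $z^{d_{j,k}}w^{k-k_0}\in\mathfrak I_{P_j}$. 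For the reverse inclusion I would expand $h_i=\sum_{k\ge k_0}\tilde f_{i,k}(z)w^{k-k_0}$; each $\tilde f_{i,k}$ has $z$-order at least $d_{j,k}$, so each term lies in $(z^{d_{j,k}}w^{k-k_0})$, and for $k>k_j$ such a term is absorbed into $(w^{k_j-k_0})\subset\mathfrak J_{P_j}$ because $d_{j,k_j}=0$. The delicate points are the convergence of this infinite sum, handled via closedness of ideals of $\mathbb C\{z,w\}$ in the $\mathfrak m$-adic topology, and the verification that the generators with $k>k_j$ are genuinely redundant; granting these, $\mathfrak I_{P_j}=\mathfrak J_{P_j}$ and hence $\nu_j=e(\mathfrak J_{P_j})=\lambda_j$, which completes the proof.
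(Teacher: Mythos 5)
Your proposal is correct, and its global skeleton coincides with the paper's: both apply Corollary \ref{cor:two dim} with $\mathfrak{U}_1=\mathfrak{U}_2=\mathfrak{M}_{Z,z_0}$, identify $-(k_0M)\cdot(k_0M)=k_0^2\deg(L)$ (via $N_{M/L^*}\cong L^*$), and reduce $\int_M T\wedge T$ to point masses at the base points $P_j$. Where you genuinely diverge is the local computation at each $P_j$. The paper never proves your exact ideal identity $\mathfrak{I}_{P_j}=\mathfrak{J}_{P_j}$; instead it establishes the weaker asymptotic statement
\begin{align*}
\log|f|=\log\Big(\sum_{k=k_0}^{k_j}|z^{d_{j,k}}\lambda^{k}|\Big)+O(1)\quad\text{near }P_j,
\end{align*}
using exactly the two ingredients you use (sections attaining the minimal order $d_{j,k}$ for one bound, the homogeneous expansion $f_i=\sum_k f_{i,k}$ with $\mathrm{ord}_{P_j}(f_{i,k})\geq d_{j,k}$ for the other), and then converts the point mass into $\lambda_j$ via Demailly's comparison theorem (Lemma \ref{lem:comparison thm}) together with Theorem \ref{thm:mult vs lelong}. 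Your route proves a strictly stronger local statement (equality of ideals rather than comparability of weights, i.e.\ of integral closures), which lets you conclude directly from Proposition \ref{prop:single ideal} and bypass the comparison theorem entirely; the paper's route needs only the $O(1)$ comparison and is in that sense more robust, but yours is more algebraic and self-contained. Your two ``delicate points'' are real but dispatch more easily than via $\mathfrak{m}$-adic closedness: split the expansion of $h_i$ at $k=k_j$; the part with $k_0\leq k\leq k_j$ is a finite sum lying in $\mathfrak{J}_{P_j}$ term by term, while the entire tail $\sum_{k>k_j}\tilde f_{i,k}w^{k-k_0}$ is divisible by $w^{k_j+1-k_0}$ and hence lies in $(w^{k_j-k_0})\subseteq\mathfrak{J}_{P_j}$ since $d_{j,k_j}=0$; this simultaneously shows the would-be generators with $k>k_j$ are redundant. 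One assertion you should write out (it also underlies the identification of the singular set of $T$ with the base locus) is that the $f_{i,k_0}$ span $H^0(M,L^{k_0})$: writing a section $s$ as $f^*\tilde s$ with $\tilde s=\sum_i a_iz_i\in\mathfrak{M}_{Z,z_0}$ (normality of $Z$, as in Proposition \ref{prop:L globally generated}) and extracting the degree-$k_0$ homogeneous component gives $s=\sum_i a_i(z_0)f_{i,k_0}$, which settles it.
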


\begin{proof}
Suppose that $f:(L^*,M)\ra(Z,z_0)$ is the normal Grauert blow-down and $(Z,z_0)\subseteq(\mathbb{C}^N_z,0)$. 
We set $f_i=f^*z_i$, $i=1,\cdots,N$, Corollary \ref{cor:two dim} yields that
\begin{align*}
\mathrm{mult}(Z,z_0)=e(\mathfrak{M}_{Z,z_0})&=-(k_0M)\cdot(k_0M)+\int_MT^2\\
&=k_0^2\mathrm{deg}(L)+\sum_{j=1}^N \int_{\{P_j\}} T^2,
\end{align*}
where $T=dd^c\log|f|-k_0[M]$.

We take neighbourhood $U_j$ of $P_j$ such that $L^*|_{U_j}\cong (U_j)_z\times\mathbb{C}_{\lambda}$
and claim that
\begin{align*}
\log|f|= 
 \log\big(\sum_{k=k_0}^{k_j}|z^{d_{j,k}}\lambda^{k}|\big)+O(1)\ \mathrm{near }\ P_j. \tag{*}
\end{align*}
We take $g_{j,k}\in H^0(M,L^{k})$ such that $\mathrm{ord}_{P_j}(g_{j,k})=d_{j,k}$, $k=k_0,\cdots,k_j$.
Viewing $g_{j,s}$ as holomorphic functions on $L^*$, we get that 
\begin{align*}
g_{j,k}(z,\lambda)=\lambda^{k}\cdot \varphi_k(z),\ \mathrm{ord}_{P_j}(\varphi_k)=d_{j,k}.
\end{align*}
From the expression $g_{j,k}=\sum c_{\alpha,k}f_1^{\alpha_1}\cdot...\cdot f_N^{\alpha_N}$, we get that $|g_{j,k}|\leq C(|f_1|+\cdots+|f_N|)$ near $P_j$. It follows that
\begin{align*}
\log|f| \geq \log\big(\sum_{k=k_0}^{k_j}|g_{j,k}| \big)+O(1)=\log\big(\sum_{k=k_0}^{k_j}|z^{d_{j,k}}\lambda^{k}| \big)+O(1).
\end{align*}
To prove the inverse inequality, we expand $f_i$ near $P_j$
\begin{align*}
f_i=\sum_{k=k_0}f_{i,k},\ f_{i,k}\in H^0(M,L^{k}).
\end{align*}
Our assumption yields that
\begin{align*}
&|f_{i,k}|\leq C|\lambda^{k}z^{d_{j,k}}|,\ k=k_0,\cdots,k_j,\\
&|f_{i,k}|\leq C|\lambda^{k_j}|,\ k> k_j.
\end{align*}
Therefore,
\begin{align*}
\log|f|&=\frac{1}{2}\log(|f_1|^2+\cdots+|f_N|^2)\\
&\leq \log \big(\sum_{k=k_0}^{k_j}|z^{d_{j,k}}\lambda^{k}| \big)+O(1).
\end{align*}
This completes the proof of $(*)$. From $T=dd^c\log|f|-k_0[M]=dd^c\log\big(\frac{{|f|}}{|\lambda|^{k_0}}\big)$, we use the comparison theorem of Lelong number (see Lemma \ref{lem:comparison thm}) to obtain
\begin{align*}
\int_{\{P_j\}}T^2=\int_{\{0\}}\Big(dd^c\log\big(\sum_{k=k_0}^{k_j}|z^{d_{j,k}}\lambda^{k-k_0}| \big)\Big)^2.
\end{align*}
Moreover, Theorem \ref{thm:mult vs lelong} yields that
\begin{align*}
\int_{\{0\}}\Big(dd^c\log\big(\sum_{k=k_0}^{k_j}|z^{d_{j,k}}\lambda^{k-k_0}| \big)\Big)^2=e\big((z^{d_{j,k_0}},\cdots,\lambda^{k_j-k_0})\big).
:=\lambda_j 
\end{align*}
So multiplicity fomula
\begin{align*}
\mathrm{mult}(Z,z_0)=k_0^2\mathrm{deg}(L)+\sum_{j=1}^N\lambda_j
\end{align*}
follows as desired.
\end{proof}

We are no in a position to prove Corollary \ref{cor:Weierstrass point}.

\begin{cor}(=Corollary \ref{cor:Weierstrass point})
Let $M$ be a compact Riemann surface of genus $g$, $P\in M$. Suppose $(Z,z_0)$ is the normal Grauert blow-down of $L_{-P}$, the line bundle generated by the divisor $-P$, then
\begin{align*} 
\mathrm{mult}(Z,z_0)=\text{the\ first\ nongap\ of\ $M$\ at\ $p$}.
\end{align*}
In particular, if $P$ is not a Weierstrass point of $M$, then
\begin{align*}
\mathrm{mult}(Z,z_0)=g+1.
\end{align*}
\end{cor}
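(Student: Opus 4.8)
The plan is to apply Theorem~\ref{thm:RS} to the ample line bundle $L := \mathcal{O}_M(P)$, whose dual $L^* = \mathcal{O}_M(-P) = L_{-P}$ is precisely the negative bundle being blown down, so that $(Z,z_0)$ is the normal Grauert blow-down of $L^*$ as required. Since $\deg L = 1$, the formula of Theorem~\ref{thm:RS} reads $\mathrm{mult}(Z,z_0) = k_0^2 + \sum_j \lambda_j$, and the whole task reduces to matching the invariants $k_0$, the base points $P_j$, the orders $k_j$, and the numbers $d_{j,k}$ with the Weierstrass gap data of $M$ at $P$.

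First I would set up the Riemann--Roch dictionary. Writing $s_P$ for the canonical section of $L$ with $\mathrm{div}(s_P) = P$, every global section of $L^k = \mathcal{O}_M(kP)$ has the form $h\cdot s_P^k$ with $h$ in the Riemann--Roch space $L(kP) = \{h : \mathrm{div}(h) \ge -kP\}$; in a local trivialization near $P$ with base coordinate $z$ (so that $s_P = z\,e$) and fibre coordinate $w$ of $L^*$, the associated function on $L^*$ is $h(z)\,z^k\,w^k$. The attainable pole orders of $h$ at $P$ are exactly $0$ together with the nongaps that are $\le k$, which is the bridge between the analytic side and the gap sequence. Since $H^0(M,L) = \mathbb{C}\cdot s_P \ne 0$ we get $k_0 = 1$; and for $g \ge 1$ every section of $L$ is a scalar multiple of $s_P$, so the base locus of $L$ is the single point $P$, giving $N = 1$ and $P_1 = P$. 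Writing $n_1$ for the first nongap, $L^k$ generates $P$ exactly when some $h$ has a pole of order exactly $k$, i.e.\ exactly when $k$ is a nongap, whence $k_1 = n_1$.

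Next I would compute the orders $d_{1,k}$ for $k = 1,\dots,n_1$. The base part of a section is $h(z)z^k$, of order $\mathrm{ord}_P(h)+k$ at $P$, and $d_{1,k}$ is the minimum of this over $h \in L(kP)$. For $k < n_1$ there is no nonzero nongap $\le k$, so $h$ must be constant and $d_{1,k} = k$; for $k = n_1$ one may take $h$ with a pole of order exactly $n_1$, giving $d_{1,n_1} = 0$. Hence the ideal appearing in Theorem~\ref{thm:RS} is generated by $z,\,z^2w,\,\dots,\,z^{n_1-1}w^{n_1-2},\,w^{n_1-1}$; as every generator but the last is divisible by $z$, this ideal equals $(z,\,w^{n_1-1})$. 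Since $z, w^{n_1-1}$ is a system of parameters in the $2$-dimensional regular (hence Cohen--Macaulay) local ring $\mathcal{O}_{\mathbb{C}^2,0}$, its Hilbert--Samuel multiplicity equals the colength $\dim_{\mathbb{C}} \mathcal{O}_{\mathbb{C}^2,0}/(z,w^{n_1-1}) = n_1 - 1$. Thus $\lambda_1 = n_1 - 1$ and $\mathrm{mult}(Z,z_0) = 1 + (n_1-1) = n_1$, the first nongap.

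Finally, the last assertion follows from the standard fact that $P$ is not a Weierstrass point precisely when its gap sequence is $\{1,2,\dots,g\}$, equivalently when the first nongap equals $g+1$; substituting $n_1 = g+1$ yields $\mathrm{mult}(Z,z_0) = g+1$. I expect the only genuinely delicate point to be the bookkeeping of the two middle paragraphs---correctly tying the minimal base orders $d_{1,k}$ to the gap/nongap dichotomy and recognizing that the $n_1$ generators collapse to the single parameter ideal $(z,w^{n_1-1})$. Once that identification is in place, the multiplicity computation and the Weierstrass-point specialization are immediate.
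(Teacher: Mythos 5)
Your proposal is correct and takes essentially the same route as the paper: apply Theorem \ref{thm:RS} to $L=L_P$, identify $k_0=1$, the unique base point $P$, and $k_1=$ the first nongap $n_1$, then read off the multiplicity. The paper's own proof merely asserts that the correction term equals $k_1-1$; your explicit computation of the orders $d_{1,k}$ (namely $d_{1,k}=k$ for $k<n_1$, $d_{1,n_1}=0$) and the collapse of the monomial ideal to the parameter ideal $(z,w^{n_1-1})$ of colength $n_1-1$ supplies exactly the bookkeeping the paper leaves implicit.
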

\begin{proof}
Let $L=L_P=(L_{-P})^*$ be the dual bundle, we have $k_0=1$, $\{P_1,\cdots,P_N\}=\{P\}$, $d_{1,1}=1$ and $k_1$ coincides with the minimal integer $k$ which is not a gap of $M$ at $P$. Theorem \ref{thm:RS} then implies that
\begin{align*}
\mathrm{mult}(Z,z_0)=\mathrm{deg}(L)+(k-1)=k.
\end{align*}
If $P$ is not a Weierstrass point of $M$, the minimal nongap of $M$ at $P$ is $g+1$, this proves the second statement.
\end{proof}

	\end{document}